\RequirePackage{amsthm}%

\documentclass[sn-mathphys,Numbered]{sn-jnl}
\usepackage{lmodern}%
\usepackage{graphicx}%
\usepackage{multirow}%
\usepackage{amsmath,amssymb,amsfonts,mathtools,bm}%
\usepackage{mathrsfs}%
\usepackage{enumitem}%
\usepackage[title]{appendix}%
\usepackage{xcolor}%
\usepackage{textcomp}%
\usepackage{manyfoot}%
\usepackage{booktabs}%
\usepackage{algorithm}%
\usepackage{algorithmicx}%
\usepackage{algpseudocode}%
\usepackage{listings}%
\usepackage{tikz}
\usetikzlibrary{positioning}
\usetikzlibrary{shapes.geometric}
\usepackage{svg}
\svgpath{{Images/}}
\graphicspath{{Images/}}

\usetikzlibrary{decorations.pathmorphing}\usetikzlibrary{patterns,arrows.meta,decorations.pathreplacing}

  \pgfdeclarepatternformonly{diagonalgray}
    {\pgfpoint{0pt}{0pt}}{\pgfpoint{6pt}{6pt}}{\pgfpoint{6pt}{6pt}}{
      \pgfsetlinewidth{0.5pt}
      \pgfpathmoveto{\pgfpoint{0pt}{0pt}}
      \pgfpathlineto{\pgfpoint{6pt}{6pt}}
      \pgfusepath{stroke}}

\newcommand{\N}{{\mathbb{N}}}

\newcommand{\R}{{\mathbb{R}}}

\newcommand{\Kc}{\mathcal{K}}
\newcommand{\Dc}{\mathcal{D}}
\newcommand{\Lc}{\mathcal{L}}

\newcommand{\Rc}{\mathcal{R}}
\newcommand{\Pc}{\mathcal{P}}

\newcommand{\Wc}{\mathcal{W}}
\newcommand{\Xc}{\mathcal{X}}
\newcommand{\Yc}{\mathcal{Y}}
\newcommand{\Zc}{\mathcal{Z}}

\DeclareMathOperator{\im}{im}

\DeclareMathOperator{\ext}{ext}

\newcommand{\pos}{\mathrm{pos}}
\newcommand{\pot}{\mathrm{pot}}
\newcommand{\kin}{\mathrm{kin}}

\newcommand{\setdef}[2]{\left\{ \, #1 \,\left\vert\vphantom{#1} \, #2 \, \right.\right\}}

\newenvironment{smallpmatrix}
  {\left(\begin{smallmatrix}}
  {\end{smallmatrix}\right)}
\newenvironment{smallbmatrix}
  {\left[\begin{smallmatrix}}
  {\end{smallmatrix}\right]}

\theoremstyle{thmstyleone}%
\newtheorem{theorem}{Theorem}
\newtheorem{proposition}[theorem]{Proposition}%
\newtheorem{lemma}[theorem]{Lemma}%

\theoremstyle{thmstyletwo}%
\newtheorem{example}{Example}%
\newtheorem{remark}{Remark}%

\theoremstyle{thmstylethree}%
\newtheorem{definition}{Definition}%

\newcommand{\bq}{\bm{\zeta}}
\newcommand{\bp}{\bm{\Gamma}}
\newcommand{\bv}{\bm{\varpi}}
\newcommand{\bF}{\bm{\tau}}

\begin{document}

\title[Rigid/flexible multibody systems]{Port-Hamiltonian modelling of coupled rigid/flexible multibody systems}


\author[1]{\fnm{Thomas} \sur{Berger}}\email{thomas.berger@math.upb.de}

\author[2]{\fnm{Ren\'e} \sur{Hochdahl}}\email{rene-christopher.hochdahl@tuhh.de}

\author*[3]{\fnm{Timo} \sur{Reis}}\email{timo.reis@tu-ilmenau.de}

\author[2]{\fnm{Robert} \sur{Seifried}}\email{robert.seifried@tuhh.de}


\equalcont{The authors contributed equally to this work.}

\affil[1]{\orgdiv{Institut f\"ur Mathematik}, \orgname{Martin-Luther-Universit\"at Halle-Wittenberg}, \orgaddress{\street{ Theodor-Lieser-Straße 5}, \city{Halle (Saale)}, \postcode{06099}, \country{Germany}}}

\affil[2]{\orgdiv{Institut f\"ur Mechanik und Meerestechnik}, \orgname{Technische Universit\"at Hamburg}, \orgaddress{\street{Ei\ss endorfer Stra\ss e 42}, \city{Hamburg}, \postcode{21073}, \country{Germany}}}

\affil*[3]{\orgdiv{Institut f\"ur Mathematik}, \orgname{Technische Universit\"at Ilmenau}, \orgaddress{\street{Weimarer Stra\ss e 25}, \city{Ilmenau}, \postcode{98693}, \country{Germany}}}

\abstract{
We develop a port-Hamiltonian framework for coupled rigid/flexible multibody systems. The rigid dynamics may be nonlinear and subject to configuration and velocity constraints, while the flexible components are described by linear port-Hamiltonian partial differential equations on one-dimensional spatial domains. The subsystems interact through boundary or distributed ports.

On the flexible side, the differential operator and its domain remain fixed, whereas state dependence enters only through finite-dimensional coupling components of the Dirac structure. We show that, under a natural surjectivity condition, the port-Hamiltonian interconnection with a modulated Dirac structure of a finite-dimensional rigid subsystem again yields a modulated Dirac structure. The Hamiltonians of the subsystems add, while the internal coupling powers cancel. The framework is illustrated by a planar moving Euler--Bernoulli beam and a slider--crank mechanism with a flexible connecting member.
}

\keywords{Port-Hamiltonian systems, multibody systems, position constraints, velocity constraints, Dirac structures, Lagrangian submanifolds, resistive relations, differential-algebraic equations, partial differential equations}


\pacs[MSC Classification]{34A09, 37J39, 53D12, 70E55, 93C10}

\maketitle

\section{Introduction}
\label{sec:introduction}

Port-Hamiltonian systems provide a modular, energy-based framework for the
modeling of dynamical systems whose components exchange power through
well-defined ports. The framework applies to finite-dimensional mechanical
and electrical systems as well as to distributed-parameter systems; see, for
instance, \cite{JvdS14,vdS13,vdS17}. Algebraic constraints and implicit
energy-storage relations can be incorporated by means of port-Hamiltonian
differential--algebraic systems
\cite{vdS10,BMXZ18,MMW18,MvdS18,MvdS20,VvdS10a,GeHaRe20}, whereas
port-Hamiltonian formulations of partial differential equations have been
developed in geometric and operator-theoretic settings
\cite{MvdS04a,MvdS04b,MvdS02,Vill07,JZ12,ReSt21,R21,PhilReis23}.

In our preceding article \cite{BHRS25}, we developed a port-Hamiltonian
formulation for rigid multibody systems with position and velocity
constraints. The resulting models are nonlinear finite-dimensional differential--
algebraic systems. The present article extends this framework to
assemblies containing flexible components. Their deformation is described by
spatially distributed variables, so that the coupled dynamics take the form
of a DAE--PDE system.

Our aim is to develop a port-Hamiltonian interconnection framework for a broad class of coupled rigid/flexible multibody systems. On the rigid side, the framework accommodates nonlinear dynamics as well as configuration and velocity constraints. On the flexible side, it covers port-Hamiltonian partial differential equations whose differential operators and domains remain fixed. The subsystems may interact through boundary or distributed ports, with coupling laws that depend on their states. We show that the corresponding power-preserving interconnection is again represented by a modulated Dirac structure. Moreover, successive interconnections can be carried out within the same framework, which supports the modular construction of more complex mechanisms.

For the flexible subsystem, we use one-dimensional port-Hamiltonian
differential operators with boundary, distributed, and resistive ports in the
spirit of \cite{AugnerDis,Vill07,le2005dirac,JZ12}. State dependence is introduced through power-pairing-preserving
automorphisms acting only on selected finite-dimensional port variables. In this way, the
infinite-dimensional differential part remains unchanged, while moving-frame
or inertial effects are represented by finite-dimensional modulations of the
interconnection structure.

The main mechanical application is a planar Euler--Bernoulli beam moving with
both endpoints. Its deformation is expressed in a frame attached to the
straight segment joining the endpoints. A momentum transformation removes
the kinetic cross terms from the energy-storage relation, while the resulting
moving-frame terms are assigned to modulations of the
Dirac structures of the rigid and flexible subsystems. The model retains translational and rotational ports at
both endpoints and can therefore be embedded into larger multibody systems.
As an illustration a slider-crank mechanism is considered, where the beam is coupled to a rigid crank wheel and a straight
guide. Its left endpoint is attached to the crank pin, while its right
endpoint moves along the guide and acts as the slider. This construction uses
the associativity of Dirac interconnection and complements the fully rigid
slider--crank model considered in \cite[Sec.~7.3]{BHRS25}.

Flexible multibody dynamics has been studied extensively using classical and computational modeling approaches \cite{Shabana20,Simeon2013,CDP17}. First Port-Hamiltonian formulations range from flexible links and manipulators to floating-frame models of flexible multibody systems \cite{MacchelliMelchiorriStramigioli07,MattioniWuLeGorrec20,BAPM23}. Building on the constrained rigid multibody framework of \cite{BHRS25}, the present work focuses on the structural interconnection of rigid systems with infinite-dimensional flexible components at the continuous level. Analytical and numerical questions concerning the resulting DAE--PDE systems are beyond the scope of this article.

The article is organized as follows. Section~\ref{sec:pHsys} introduces the
port-Hamiltonian concepts used in the article and establishes the
interconnection result for modulated Dirac structures.
Section~\ref{sec:rigidmks} recalls the rigid multibody formulation from
\cite{BHRS25} in the form required for coupling with flexible components.
Section~\ref{sec:flexiblemks} introduces the fixed port-Hamiltonian PDE
structure and its finite-dimensional modulation. In Section~\ref{sec:ex}, we first derive the moving planar
Euler--Bernoulli beam and its port-Hamiltonian decomposition and then embed
this component into a slider--crank mechanism with a flexible connecting
member.

\subsection{Notation}

All vector spaces considered in this article are real. If $\Zc$ is a Hilbert
space, its inner product is denoted by
$\langle\cdot,\cdot\rangle_{\Zc}$; the subscript is omitted whenever the
underlying space is clear. The Euclidean inner product on $\R^n$ is written
as
$\langle z_1,z_2\rangle=z_1^\top z_2$.
For Hilbert spaces $\Xc$ and $\Yc$, the space of bounded linear operators from
$\Xc$ to $\Yc$ is denoted by $L(\Xc,\Yc)$, and we write
$L(\Xc):=L(\Xc,\Xc)$.
If $\mathcal S$ is a topological space and $\Wc$ and $\Yc$ are Hilbert
spaces, a mapping
$A:\mathcal S\to L(\Wc,\Yc)$
is called \emph{operator-norm continuous} if it is continuous when
$L(\Wc,\Yc)$ is equipped with the operator-norm topology. Thus no linear
structure on the parameter space $\mathcal S$ is required. 
$L(\Xc):=L(\Xc,\Xc)$.

We use the standard notation for Lebesgue and Sobolev spaces
\cite{Adam03}. For Hilbert-space-valued function spaces, the target space is
indicated after the domain; for example,
$L^2(\Omega;\Zc)$
denotes the space of square-integrable $\Zc$-valued functions on $\Omega$.
All integrals of Hilbert-space-valued functions are understood in the Bochner
sense; see \cite{Diestel77}.

We also use the standard notion of a continuously differentiable Hilbert
submanifold. Thus, a subset $\mathcal M$ of a Hilbert space $\Wc$ is a
Hilbert submanifold if, locally around every $x\in\mathcal M$, it can be
represented as
\[
\mathcal M\cap U
=
\setdef{y\in U}{f(y)=0},
\]
where $U\subset\Wc$ is open, $\Yc$ is a Hilbert space,
$f\in C^1(U;\Yc)$, and $f'(x):\Wc\to\Yc$ is surjective. Its tangent space is then
\[
T_x\mathcal M=\ker f'(x),
\]
or, equivalently, the set of velocities at $x$ of continuously
differentiable curves in $\mathcal M$. We refer to
\cite[Chap.~73]{Zeid88} for further details.


\section{Port-Hamiltonian systems and their interconnection}
\label{sec:pHsys}

\subsection{Port-Hamiltonian systems}
\label{sec:port-Hamiltonian-systems}

A central concept in port-Hamiltonian modeling is that of a Dirac structure.
It describes the power-preserving routing of energy between the storage,
dissipative, and external variables of a system. For an analytical treatment
of Dirac structures on Hilbert spaces, we refer to \cite{BHM23}.

Let $\Zc$ be a Hilbert space. On $\Zc\times\Zc$, consider the symmetric
bilinear form
\[
\begin{aligned}
\langle\!\langle\cdot,\cdot\rangle\!\rangle:
(\Zc\times\Zc)\times(\Zc\times\Zc)
&\to\R,
\\
\langle\!\langle(f_1,e_1),(f_2,e_2)\rangle\!\rangle
&:=
\langle f_1,e_2\rangle_{\Zc}
+
\langle f_2,e_1\rangle_{\Zc}.
\end{aligned}
\]
This form is called the \emph{power pairing}. For a subset
$\mathcal M\subset\Zc\times\Zc$, its \emph{power annihilator} is
\[
\mathcal M^{\bot\!\!\!\bot}
:=
\setdef{
(f,e)\in\Zc\times\Zc
}{
\langle\!\langle(f,e),(\widehat f,\widehat e)\rangle\!\rangle=0
\text{ for all }(\widehat f,\widehat e)\in\mathcal M
}.
\]
A subspace $\mathcal N\subset\Zc\times\Zc$ is called
\emph{power-isotropic} if
$\mathcal N\subset\mathcal N^{\bot\!\!\!\bot}$.
An element $z\in\Zc\times\Zc$ is called \emph{power-orthogonal} to a
subset $\mathcal M\subset\Zc\times\Zc$ if
\[
\langle\!\langle z,m\rangle\!\rangle=0
\qquad
\text{for all }m\in\mathcal M.
\]
Two subsets $\mathcal M,\mathcal N\subset\Zc\times\Zc$ are called
power-orthogonal if every element of $\mathcal M$ is power-orthogonal to
$\mathcal N$.

\begin{definition}[Dirac structure]
\label{def-Dir}
Let $\Zc$ be a Hilbert space. A subspace
$\mathcal D\subset\Zc\times\Zc$ is called a \emph{Dirac structure} if
$\mathcal D=\mathcal D^{\bot\!\!\!\bot}$.
\end{definition}

For $(f,e)\in\mathcal D$, the variables $f$ and $e$ are called the
\emph{flow} and the \emph{effort}, respectively. Every Dirac structure is
power-isotropic and closed, since every power annihilator is closed.

In a general bond-space formulation, flows and efforts belong to mutually
dual spaces. Since all spaces used in this article are Hilbert spaces, we
identify each space with its dual by means of the Riesz isomorphism. This
allows us to use the power pairing defined above. The distinction between
primal and dual spaces becomes essential for more general Stokes--Dirac
structures, in particular on higher-dimensional spatial domains; see
\cite{BHM23}.

In multibody systems, the power-preserving interconnection structure may
depend on the system state. This motivates the following notion.

\begin{definition}[Modulated Dirac structure]
\label{def-DirMod}
Let $\Xc$ be a topological space and let $\Zc$ be a Hilbert space. A family
$\mathcal D=(\mathcal D_x)_{x\in\Xc}$,
$\mathcal D_x\subset\Zc\times\Zc$,
is called a \emph{modulated Dirac structure} if the following conditions
hold:
\begin{enumerate}[label=(\alph*), ref=(\alph*)]
\item
\label{def-DirModa}
For every $x\in\Xc$, the subspace $\mathcal D_x$ is a Dirac structure.

\item
\label{def-DirModb}
There exists a Hilbert space $\Wc$ such that, for every $x\in\Xc$, there
exist an open neighborhood $U_x\subset\Xc$ and bounded linear operators
$T_y\in L(\Wc,\Zc\times\Zc)$, $y\in U_x$,
such that:
\begin{enumerate}[label=(\roman*)]
\item
$T_y$ is a bijection from $\Wc$ onto $\mathcal D_y$ for every
$y\in U_x$;
\item
the mapping
$U_x\to L(\Wc,\Zc\times\Zc)$,
$y\mapsto T_y$,
is continuous with respect to the operator norm.
\end{enumerate}
\end{enumerate}
\end{definition}
Condition~\ref{def-DirModb} is precisely what is meant by a local trivialization of the family $(\mathcal D_y)_{y\in\Xc}$. Since each $\mathcal D_y$ is closed, the inverse of the corresponding trivializing map is bounded by the bounded inverse theorem.

\begin{remark}
\label{rem:dirfin}
If $\dim\Zc=n<\infty$, then every Dirac structure
$\mathcal D_x\subset\Zc\times\Zc$ has dimension $n$; see
\cite[Sec.~2.2]{JvdS14}. Hence the model space in
Definition~\ref{def-DirMod} may be chosen as $\Wc=\R^n$.
\end{remark}

Energy storage is described by a Lagrangian submanifold. In a general
geometric formulation, this is a submanifold of a cotangent bundle. For the
systems considered here, it is sufficient to work in $\Zc\times\Zc$.

\begin{definition}[Lagrangian submanifold]
\label{def-Lag}
Let $\Zc$ be a Hilbert space. A Hilbert submanifold
$\mathcal L\subset\Zc\times\Zc$ is called a
\emph{Lagrangian submanifold} if, for every $z\in\mathcal L$,
\[
(v_1,v_2)\in T_z\mathcal L
\quad\Longleftrightarrow\quad
\langle v_1,w_2\rangle_{\Zc}
-
\langle w_1,v_2\rangle_{\Zc}
=0\;\text{ for all $(w_1,w_2)\in T_z\mathcal L$.}
\]
\end{definition}

For $\mathcal H\in C^2(U;\R)$ on an open finite-dimensional domain
$U$, the graph of $\nabla\mathcal H$ is a Lagrangian submanifold;
see \cite[Prop.~22.12]{Lee12}. The following elementary
infinite-dimensional version will be used for the flexible energy-storage
relation.

\begin{proposition}
\label{prop:infdimlagr}
Let $\Zc$ be a Hilbert space and let $H\in L(\Zc)$ be self-adjoint. Then
\[
\mathcal L_H
:=
\setdef{(x,Hx)}{x\in\Zc}
\subset\Zc\times\Zc
\]
is a Lagrangian submanifold.
\end{proposition}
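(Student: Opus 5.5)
We will verify the two requirements of Definition~\ref{def-Lag} directly: first that $\mathcal L_H$ is a Hilbert submanifold of $\Zc\times\Zc$ with an explicitly computable tangent space, and then the symplectic characterization of that tangent space.

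For the submanifold property, we take $U=\Zc\times\Zc$ and $\Yc=\Zc$, and define
\[
f\colon\Zc\times\Zc\to\Zc,\qquad f(x,y):=y-Hx .
\]
Then $f$ is bounded linear, hence $C^1$ with $f'(z)=f$ for every $z$. It is surjective, since $f(0,y)=y$. Moreover $\mathcal L_H=f^{-1}(0)$, so $\mathcal L_H$ is a Hilbert submanifold. Its tangent space at every point is
\[
T_z\mathcal L_H=\ker f=\mathcal L_H=\setdef{(v,Hv)}{v\in\Zc}.
\]

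For the Lagrangian condition, fix $z$ and first take $(v_1,v_2)=(v,Hv)$ and $(w_1,w_2)=(w,Hw)$. Self-adjointness gives
\[
\langle v,Hw\rangle-\langle w,Hv\rangle=0,
\]
which is the forward implication. For the converse, suppose $(v_1,v_2)\in\Zc\times\Zc$ satisfies
\[
\langle v_1,Hw\rangle-\langle w,v_2\rangle=0\qquad\text{for all }w\in\Zc.
\]
Using $H=H^*$, this reads $\langle Hv_1-v_2,w\rangle=0$ for all $w\in\Zc$. Hence $v_2=Hv_1$, so $(v_1,v_2)\in T_z\mathcal L_H$.

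There is no real obstacle. The only points needing care are boundedness of $H$, which makes $f$ continuously Fréchet differentiable, and self-adjointness, which is used in both directions of the equivalence.
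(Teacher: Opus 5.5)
Your proposal is correct and follows essentially the same route as the paper. Both identify $T_z\mathcal L_H=\mathcal L_H$ and use self-adjointness to turn the Lagrangian condition into $\langle Hv_1-v_2,w\rangle=0$ for all $w\in\Zc$; the only difference is that you verify the submanifold property explicitly through the surjective defining map $f(x,y)=y-Hx$, whereas the paper just notes that $\mathcal L_H$ is a closed subspace.
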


\begin{proof}
The set $\mathcal L_H$ is a closed subspace and hence
$T_z\mathcal L_H=\mathcal L_H$ for every $z\in\mathcal L_H$.
For $x,e\in\Zc$, self-adjointness of $H$ gives
\[
\langle x,Hw\rangle_{\Zc}
-
\langle w,e\rangle_{\Zc}
=
\langle w,Hx-e\rangle_{\Zc}.
\]
This expression vanishes for all $w\in\Zc$ if and only if $e=Hx$.
Thus Definition~\ref{def-Lag} is satisfied.
\end{proof}

Dissipation is described by a resistive relation.

\begin{definition}[(Modulated) resistive relation]
\label{def-res}
Let $\Zc$ be a Hilbert space. A relation
$\mathcal R\subset\Zc\times\Zc$ is called \emph{resistive} if
\[
\langle f_\Rc,e_\Rc\rangle_{\Zc}\geq0
\qquad
\text{for every }(f_\Rc,e_\Rc)\in\mathcal R.
\]
If $\Xc$ is a set, a family
$\mathcal R=(\mathcal R_x)_{x\in\Xc}$
is called a \emph{modulated resistive relation} if every
$\mathcal R_x\subset\Zc\times\Zc$ is resistive.
\end{definition}

We can now define the class of port-Hamiltonian systems used in this article.
More general formulations on manifolds can be found in
\cite{MvdS20,JvdS14}.

\begin{definition}[Port-Hamiltonian system]
\label{def-pH}
Let $\Zc_\Lc$, $\Zc_\Rc$, and $\Zc_\Pc$ be Hilbert spaces, and let
$\Xc\subset\Zc_\Lc$ be open. A
\emph{port-Hamiltonian system on $\Xc$} is a differential inclusion of the
form
\begin{equation}
\left(
\begin{pmatrix}
\dot x(t)\\
f_\Rc(t)\\
f_\Pc(t)
\end{pmatrix},
\begin{pmatrix}
e_\Lc(t)\\
e_\Rc(t)\\
e_\Pc(t)
\end{pmatrix}
\right)
\in\mathcal D_{x(t)},
\qquad
\bigl(x(t),e_\Lc(t)\bigr)\in\mathcal L,
\qquad
\bigl(f_\Rc(t),e_\Rc(t)\bigr)\in\mathcal R_{x(t)},
\label{eq:phinc}
\end{equation}
where
\begin{itemize}
\item
$\mathcal D=(\mathcal D_x)_{x\in\Xc}$,
$\mathcal D_x
\subset
\bigl(\Zc_\Lc\times\Zc_\Rc\times\Zc_\Pc\bigr)^2$,
is a modulated Dirac structure;
\item
$\mathcal L\subset\Zc_\Lc\times\Zc_\Lc$
is a Lagrangian submanifold;
\item
$\mathcal R=(\mathcal R_x)_{x\in\Xc}$,
$\mathcal R_x\subset\Zc_\Rc\times\Zc_\Rc$,
is a modulated resistive relation.
\end{itemize}
The corresponding flow and effort variables are called
\emph{energy-storage variables}, \emph{resistive variables}, and
\emph{external port variables}, respectively.
\end{definition}

\begin{remark}[Sign convention]
The sign convention in Definition~\ref{def-pH} differs from the convention
used, for instance, in \cite{JvdS14}, where $-\dot x$ enters the Dirac
structure and the resistive relation is defined with the opposite sign. The
two conventions are equivalent after changing the signs of the corresponding
flow variables. With the convention used here,
$\langle f_\Pc,e_\Pc\rangle_{\Zc_\Pc}$
is the power delivered by the system through its external ports.
\end{remark}

\begin{remark}[Energy balance]
\label{rem:general-energy-balance}
Suppose that there exist an open set
$U\subset\Zc_\Lc$ containing the considered trajectory and a continuously
differentiable function
$\mathcal H:U\to\R$
such that
\[
\tfrac{\mathrm d}{\mathrm dt}\mathcal H(x(t))
=
\langle\dot x(t),e_\Lc(t)\rangle_{\Zc_\Lc}.
\]
The power-isotropy of $\mathcal D_{x(t)}$ then yields
\[
\tfrac{\mathrm d}{\mathrm dt}\mathcal H(x(t))
=
-
\langle f_\Rc(t),e_\Rc(t)\rangle_{\Zc_\Rc}
-
\langle f_\Pc(t),e_\Pc(t)\rangle_{\Zc_\Pc}.
\]
Consequently,
\[
\tfrac{\mathrm d}{\mathrm dt}\mathcal H(x(t))
\leq
-
\langle f_\Pc(t),e_\Pc(t)\rangle_{\Zc_\Pc}.
\]
Thus $-\langle f_\Pc,e_\Pc\rangle_{\Zc_\Pc}$ is the power supplied to the
system through its external ports.
\end{remark}

We finally fix the notation for products of relations. Let
$\Zc_1,\ldots,\Zc_k$ be Hilbert spaces and let
$\mathcal M_i\subset\Zc_i\times\Zc_i$,
$i=1,\ldots,k$.
Under the canonical identification of
$(\Zc_1\times\Zc_1)\times(\Zc_2\times\Zc_2)$
with
$(\Zc_1\times\Zc_2)\times(\Zc_1\times\Zc_2)$,
we write
\[
\mathcal M_1\times\mathcal M_2
:=
\setdef{
\left(
\begin{smallpmatrix}
f_1\\
f_2
\end{smallpmatrix},
\begin{smallpmatrix}
e_1\\[1mm]
e_2
\end{smallpmatrix}
\right)
}{
(f_1,e_1)\in\mathcal M_1,\quad
(f_2,e_2)\in\mathcal M_2
}.
\]
Products of three or more relations are defined associatively. The same
notation is used pointwise for families of relations.

The following properties follow directly from the definitions:
\begin{enumerate}[label=(\roman*)]
\item
the product of Dirac structures is a Dirac structure, and the product of
modulated Dirac structures is a modulated Dirac structure;

\item
the product of Lagrangian submanifolds is a Lagrangian submanifold;

\item
the product of resistive relations is a resistive relation, and the
corresponding statement holds for modulated resistive relations.
\end{enumerate}

\subsection{Interconnection of port-Hamiltonian systems}
\label{sec:interc}

A fundamental feature of the port-Hamiltonian framework is its modularity
under power-preserving interconnections; see, for instance,
\cite{BCGM18,CvdSB07,JvdS14,VvdS10b}. For each subsystem, the external port
is decomposed into a port that remains external and a coupling port that is
connected to another subsystem. If the coupling variables
$(f_{{\rm c}1},e_{{\rm c}1})$ and
$(f_{{\rm c}2},e_{{\rm c}2})$
take values in the same Hilbert space $\Zc_{\rm c}$, the interconnection is
defined by
\begin{equation}
f_{{\rm c}1}=f_{{\rm c}2},
\qquad
e_{{\rm c}1}=-e_{{\rm c}2}.\label{eq:couplcond}
\end{equation}
Hence,
\[
\langle f_{{\rm c}1},e_{{\rm c}1}\rangle_{\Zc_{\rm c}}
+
\langle f_{{\rm c}2},e_{{\rm c}2}\rangle_{\Zc_{\rm c}}
=0,
\]
so that the coupling ports exchange power internally without generating or
dissipating it. In
mechanical applications, equality of the coupling flows typically expresses
kinematic compatibility, whereas the opposite coupling efforts express force
equilibrium. The interconnection is depicted in
Figure~\ref{fig:pHinterc}.

\begin{figure}
    \centering
    \begin{tikzpicture}[scale=0.5]
\usetikzlibrary{patterns}
\usetikzlibrary{calc}
\pgfdeclarelayer{background}
\pgfsetlayers{background,main}

\coordinate (D1) at (0,0);
\coordinate (H1) at (-3,4);
\coordinate (R1) at (3,4);

\coordinate (a11) at ($(D1)+(0.3,0.2)$);
\coordinate (a12) at ($(H1)+(0.3,0.2)$);
\draw[line width=3pt] (a11) -- (a12);
\node at ($0.5*(a11)+0.5*(a12)+(0.5,0.3)$) {$e_{\mathcal{S}1}$};

\coordinate (b11) at ($(D1)+(-0.3,-0.2)$);
\coordinate (b12) at ($(H1)+(-0.3,-0.2)$);
\draw[line width=3pt] (b11) -- (b12);
\node at ($0.5*(b11)+0.5*(b12)+(-0.7,-0.0)$) {$f_{\mathcal{S}1}$};

\coordinate (c11) at ($(D1)+(0.3,-0.2)$);
\coordinate (c12) at ($(R1)+(0.3,-0.2)$);
\draw[line width=3pt] (c11) -- (c12);
\node at ($0.5*(c11)+0.5*(c12)+(0.7,-0.0)$) {$e_{\mathcal{R}1}$};

\coordinate (d11) at ($(D1)+(-0.3,0.2)$);
\coordinate (d12) at ($(R1)+(-0.3,0.2)$);
\draw[line width=3pt] (d11) -- (d12);
\node at ($0.5*(d11)+0.5*(d12)+(-0.5,0.3)$) {$f_{\mathcal{R}1}$};

\coordinate (e11) at ($(D1)+(0.3,-0.2)$);
\coordinate (e12) at ($(D1)+(0.3,-3.2)$);
\draw[line width=3pt] (e11) -- (e12);
\node at ($0.5*(e11)+0.5*(e12)+(0.6,-0.96)$) {$e_{\mathcal{P}1}$};

\coordinate (f11) at ($(D1)+(-0.3,-0.2)$);
\coordinate (f12) at ($(D1)+(-0.3,-3.2)$);
\draw[line width=3pt] (f11) -- (f12);
\node at ($0.5*(f11)+0.5*(f12)+(-0.6,-0.9)$) {$f_{\mathcal{P}1}$};

\newcommand{\opaqueellipse}[4]{
  \fill[white] (#1) ellipse [x radius=#2, y radius=#3];
  \fill[pattern=diagonalgray, pattern color=gray!50] (#1) ellipse [x radius=#2, y radius=#3];
  \draw[thick] (#1) ellipse [x radius=#2, y radius=#3];
  \node at (#1) {\Large {#4}};
}

\opaqueellipse{D1}{3cm}{2cm}{$\Dc_1$}
\opaqueellipse{H1}{1.8cm}{1.2cm}{$\mathcal{H}_1$}
\opaqueellipse{R1}{1.8cm}{1.2cm}{$\mathcal{R}_1$}

\begin{scope}[xshift=11cm]  

\coordinate (D2) at (0,0);
\coordinate (H2) at (-3,4);
\coordinate (R2) at (3,4);

\coordinate (a21) at ($(D2)+(0.3,0.2)$);
\coordinate (a22) at ($(H2)+(0.3,0.2)$);
\draw[line width=3pt] (a21) -- (a22);
\node at ($0.5*(a21)+0.5*(a22)+(0.5,0.3)$) {$e_{\mathcal{S}2}$};

\coordinate (b21) at ($(D2)+(-0.3,-0.2)$);
\coordinate (b22) at ($(H2)+(-0.3,-0.2)$);
\draw[line width=3pt] (b21) -- (b22);
\node at ($0.5*(b21)+0.5*(b22)+(-0.7,-0.0)$) {$f_{\mathcal{S}2}$};

\coordinate (c21) at ($(D2)+(0.3,-0.2)$);
\coordinate (c22) at ($(R2)+(0.3,-0.2)$);
\draw[line width=3pt] (c21) -- (c22);
\node at ($0.5*(c21)+0.5*(c22)+(0.7,-0.0)$) {$e_{\mathcal{R}2}$};

\coordinate (d21) at ($(D2)+(-0.3,0.2)$);
\coordinate (d22) at ($(R2)+(-0.3,0.2)$);
\draw[line width=3pt] (d21) -- (d22);
\node at ($0.5*(d21)+0.5*(d22)+(-0.5,0.3)$) {$f_{\mathcal{R}2}$};

\coordinate (e21) at ($(D2)+(0.3,-0.2)$);
\coordinate (e22) at ($(D2)+(0.3,-3.2)$);
\draw[line width=3pt] (e21) -- (e22);
\node at ($0.5*(e21)+0.5*(e22)+(0.6,-0.96)$) {$e_{\mathcal{P}2}$};

\coordinate (f21) at ($(D2)+(-0.3,-0.2)$);
\coordinate (f22) at ($(D2)+(-0.3,-3.2)$);
\draw[line width=3pt] (f21) -- (f22);
\node at ($0.5*(f21)+0.5*(f22)+(-0.6,-0.9)$) {$f_{\mathcal{P}2}$};

\opaqueellipse{D2}{3cm}{2cm}{$\Dc_2$}
\opaqueellipse{H2}{1.8cm}{1.2cm}{$\mathcal{H}_2$}
\opaqueellipse{R2}{1.8cm}{1.2cm}{$\mathcal{R}_2$}

\end{scope}

\begin{pgfonlayer}{background}  
  \coordinate (a1) at ($(D1)+(0.3,0.2)$);
  \coordinate (a2) at ($(D2)+(-0.3,0.2)$);
  \draw[line width=3pt] (a1) -- (a2);
  \node[align=left, anchor=west] at ($0.5*(a1)+0.5*(a2)+(-1.4,0.5)$) {$f_{\mathrm{c}1}=-f_{\mathrm{c}2}$};

  \coordinate (b1) at ($(D1)+(0.3,-0.2)$);
  \coordinate (b2) at ($(D2)+(-0.3,-0.2)$);
  \draw[line width=3pt] (b1) -- (b2);
  \node[align=left, anchor=west] at ($0.5*(b1)+0.5*(b2)+(-1.4,-0.5)$) {$e_{\mathrm{c}1}=\phantom{-}e_{\mathrm{c}2}$};
\end{pgfonlayer}
\end{tikzpicture}
    \caption{Interconnection of two port-Hamiltonian systems.}
    \label{fig:pHinterc}
\end{figure}

\begin{definition}[Interconnection of Dirac structures]
\label{def:DiracInterc}
Let $\Xc_1$ and $\Xc_2$ be topological spaces, and let
$\Zc_1$, $\Zc_2$, and $\Zc_{\rm c}$ be Hilbert spaces. Moreover, let
\[
\Dc_i=(\Dc_{i,x_i})_{x_i\in\Xc_i},
\qquad i=1,2,
\]
be families of Dirac structures satisfying
\[
\Dc_{i,x_i}
\subset
(\Zc_i\times\Zc_{\rm c})^2.
\]
For $(x_1,x_2)\in\Xc_1\times\Xc_2$, their
\emph{interconnection with respect to $\Zc_{\rm c}$} is
\[
\Dc_{1,x_1}\circ_{\Zc_{\rm c}}\Dc_{2,x_2}
:=
\setdef{
\begin{aligned}\left(
\begin{pmatrix}
f_1\\
f_2
\end{pmatrix},
\begin{pmatrix}
e_1\\
e_2
\end{pmatrix}
\right)\\
\in(\Zc_1\times\Zc_2)^2
\end{aligned}}{
\begin{aligned}
\text{there exist }f_{\rm c},e_{\rm c}\in\Zc_{\rm c}
\text{ such that}
\\
\left(
\begin{smallpmatrix}
f_1\\
f_{\rm c}
\end{smallpmatrix},
\begin{smallpmatrix}
e_1\\[1mm]
e_{\rm c}
\end{smallpmatrix}
\right)
\in\Dc_{1,x_1},\\
\text{and }\left(
\begin{smallpmatrix}
f_2\\
f_{\rm c}
\end{smallpmatrix},
\begin{smallpmatrix}
e_2\\[1mm]
-e_{\rm c}
\end{smallpmatrix}
\right)
\in\Dc_{2,x_2}
\end{aligned}}.
\]
The corresponding family is denoted by
\[
\Dc_1\circ_{\Zc_{\rm c}}\Dc_2
:=
\bigl(
\Dc_{1,x_1}\circ_{\Zc_{\rm c}}\Dc_{2,x_2}
\bigr)_{(x_1,x_2)\in\Xc_1\times\Xc_2}.
\]
The same notation will be used for families of arbitrary subspaces in place
of families of Dirac structures.
\end{definition}

\begin{remark}[Associativity of interconnection]
\label{rem:order-of-interconnection}
Let
\[
\mathcal M_1
\subset
\bigl(\Zc_1\times\Zc_{{\rm c}12}\bigr)^2,\quad
\mathcal M_2
\subset
\bigl(\Zc_2\times\Zc_{{\rm c}12}\times\Zc_{{\rm c}23}\bigr)^2,\quad
\mathcal M_3
\subset
\bigl(\Zc_3\times\Zc_{{\rm c}23}\bigr)^2
\]
be subspaces. Then, up to the canonical ordering of the remaining flow and
effort variables,
\[
\bigl(
\mathcal M_1\circ_{\Zc_{{\rm c}12}}\mathcal M_2
\bigr)
\circ_{\Zc_{{\rm c}23}}\mathcal M_3
=
\mathcal M_1\circ_{\Zc_{{\rm c}12}}
\bigl(
\mathcal M_2\circ_{\Zc_{{\rm c}23}}\mathcal M_3
\bigr).
\]
Indeed, both sides impose the same coupling conditions and eliminate the
same internal port variables. The identity is therefore purely algebraic
and also holds pointwise for families of subspaces. For finite-dimensional
Dirac structures, see also \cite{CvdSB07}.
\end{remark}

The following elementary lemma will be used to construct local
trivializations of interconnected families.

\begin{lemma}[Continuous families of kernels]
\label{lem:continuous-kernels}
Let $\Xc$ be a topological space, let $\Wc$ and $\Yc$ be Hilbert spaces, and
let
$A_x\in L(\Wc,\Yc)$,
$x\in\Xc$,
be an operator-norm continuous family of surjective operators. For every
$x^0\in\Xc$, there exist an open neighborhood $U$ of $x^0$ and an
operator-norm continuous family
\[
L_x:\ker A_{x^0}\to\ker A_x,
\qquad x\in U,
\]
of bounded linear isomorphisms.
\end{lemma}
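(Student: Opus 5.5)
The plan is to use a continuous family of right inverses of $A_x$ to project $\ker A_{x^0}$ onto $\ker A_x$. Since $A_{x^0}$ is surjective and bounded between Hilbert spaces, the operator $A_{x^0}A_{x^0}^*\in L(\Yc)$ is invertible. Indeed, by the closed range theorem $A_{x^0}^*$ is injective with closed range and bounded below, so $A_{x^0}A_{x^0}^*$ is coercive: $\langle A_{x^0}A_{x^0}^*y,y\rangle=\|A_{x^0}^*y\|^2\ge c\|y\|^2$.

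The map $x\mapsto A_xA_x^*$ is operator-norm continuous, because adjoints preserve norms and composition is jointly continuous in operator norm. The invertible operators form an open set on which inversion is continuous. Hence there is an open neighborhood $U$ of $x^0$ on which $A_xA_x^*$ is invertible, and
\[
P_x:=I_{\Wc}-A_x^*(A_xA_x^*)^{-1}A_x,\qquad x\in U,
\]
is an operator-norm continuous family. Each $P_x$ is the orthogonal projection onto $\ker A_x$: it is self-adjoint, idempotent, has range in $\ker A_x$, and is the identity on $\ker A_x$.

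Define $L_x:=P_x|_{\ker A_{x^0}}:\ker A_{x^0}\to\ker A_x$. These maps are bounded, and $x\mapsto L_x$ is continuous, since $\|L_x-L_y\|\le\|P_x-P_y\|$. It remains to show that $L_x$ is an isomorphism after possibly shrinking $U$. I would compare it with the reverse map $P_{x^0}|_{\ker A_x}$.

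For $w\in\ker A_{x^0}$ we have $P_{x^0}P_xw-w=P_{x^0}(P_x-P_{x^0})w$, so $\|P_{x^0}L_x-I\|\le\|P_x-P_{x^0}\|$. Symmetrically, for $v\in\ker A_x$ we have $P_xP_{x^0}v-v=P_x(P_{x^0}-P_x)v$. Shrinking $U$ so that $\|P_x-P_{x^0}\|<1$, both $P_{x^0}L_x$ on $\ker A_{x^0}$ and $L_xP_{x^0}|_{\ker A_x}$ on $\ker A_x$ are invertible by a Neumann series. Hence $L_x$ is injective, since it has a left inverse, and surjective, since it has a right inverse. It is therefore a bounded linear isomorphism, with bounded inverse by the bounded inverse theorem or directly from the Neumann series.

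The main obstacle is the openness and continuity step: establishing invertibility of $A_xA_x^*$ near $x^0$ from surjectivity at $x^0$ alone, using the lower bound for $A_{x^0}^*$. The argument also shows that $A_x$ stays surjective near $x^0$, although surjectivity is already assumed for every $x$. The rest is routine projection estimates.
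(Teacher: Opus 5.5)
Your proof is correct, but it takes a different route from the paper.

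\textbf{The paper's route.} The paper fixes one bounded right inverse $R$ of $A_{x^0}$ and uses the fixed splitting $\Wc=\ker A_{x^0}\oplus\im R$. It sets $L_xu=u-R(A_xR)^{-1}A_xu$, which is the restriction to $\ker A_{x^0}$ of the oblique projection onto $\ker A_x$ along the fixed complement $\im R$. Bijectivity is then purely algebraic: write $v\in\ker A_x$ as $u+Ry$ and solve for $y$. This holds as soon as $A_xR$ is invertible, with no further smallness condition.

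\textbf{Your route.} You use the $x$-dependent right inverses $A_x^*(A_xA_x^*)^{-1}$, so $L_x$ is the restriction of the orthogonal projection $P_x$ onto $\ker A_x$. The complement $(\ker A_x)^\perp=\im A_x^*$ now moves with $x$, so bijectivity is no longer automatic. You therefore need the extra gap condition $\|P_x-P_{x^0}\|<1$ and the two Neumann-series arguments, which you carry out correctly.

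\textbf{Comparison.} Your construction is canonical, since it involves no choice of $R$. It yields contractions $\|L_x\|\le 1$ with a clear geometric meaning. However, it relies essentially on Hilbert adjoints and may require a smaller neighbourhood. The paper's argument is shorter and works verbatim for Banach spaces whenever $\ker A_{x^0}$ is complemented. If you freeze your right inverse at $x^0$, taking $R=A_{x^0}^*(A_{x^0}A_{x^0}^*)^{-1}$, you recover exactly the paper's construction.

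\textbf{A minor point.} The step you flag as the main obstacle is harmless. Every $A_x$ is surjective by hypothesis, so $A_xA_x^*$ is invertible for all $x$, and only the continuity of inversion is actually used.
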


\begin{proof}
Let
$\Kc:=\ker A_{x^0}$
and choose a bounded linear right inverse
$R\in L(\Yc,\Wc)$
of $A_{x^0}$. Then
$\Wc=\Kc\oplus\im R$.
Since $A_{x^0}R=I_{\Yc}$, the operator $A_xR$ is invertible for all $x$ in
a sufficiently small neighborhood $U$ of $x^0$. For $x\in U$, define
\[
L_xu
:=
u-R(A_xR)^{-1}A_xu,
\qquad
u\in\Kc.
\]
Then $A_xL_xu=0$, so that
$\im L_x\subset\ker A_x$.
Conversely, if $v\in\ker A_x$, write
\[
v=u+Ry,
\qquad
u\in\Kc,\quad y\in\Yc.
\]
The identity $A_xv=0$ gives
\[
y=-(A_xR)^{-1}A_xu,
\]
and hence $v=L_xu$. Thus $\im L_x=\ker A_x$. Moreover, $L_x$ is injective
because
$\Kc\cap\im R=\{0\}$.
The operator-norm continuity of $x\mapsto L_x$ follows directly from its
definition.
\end{proof}

\begin{proposition}[Interconnection through a finite-dimensional coupling space]
\label{prop:Dirac_comp}
Let $\Xc_1$ and $\Xc_2$ be topological spaces, let
$\Zc_1$ and $\Zc_2$ be Hilbert spaces, and let
$\Zc_{\rm c}$ be a finite-dimensional Hilbert space. Let
$\Dc_i
=
(\Dc_{i,x_i})_{x_i\in\Xc_i}$,
$i=1,2$,
be modulated Dirac structures satisfying
$\Dc_{i,x_i}
\subset
(\Zc_i\times\Zc_{\rm c})^2$.
For $(x_1,x_2)\in\Xc_1\times\Xc_2$, define
\[
\begin{aligned}
\Gamma_{x_1,x_2}:
\Dc_{1,x_1}\times\Dc_{2,x_2}
&\to
\Zc_{\rm c}\times\Zc_{\rm c},
\\
(d_1,d_2)
&\mapsto
\begin{pmatrix}
f_{{\rm c}1}-f_{{\rm c}2}\\
e_{{\rm c}1}+e_{{\rm c}2}
\end{pmatrix},
\end{aligned}
\]
where $f_{{\rm c}i}$ and $e_{{\rm c}i}$ denote the coupling components of
$d_i$. Assume that $\Gamma_{x_1,x_2}$ is surjective for every
$(x_1,x_2)\in\Xc_1\times\Xc_2$. Then
$\Dc_1\circ_{\Zc_{\rm c}}\Dc_2$
is a modulated Dirac structure.
\end{proposition}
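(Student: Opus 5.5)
The proof has two parts: for each fixed $(x_1,x_2)$ the interconnection $\Dc_x:=\Dc_{1,x_1}\circ_{\Zc_{\rm c}}\Dc_{2,x_2}$, with $x=(x_1,x_2)$, is a Dirac structure, and the family admits local trivializations. The trivializations will come from Lemma~\ref{lem:continuous-kernels} applied to the maps $\Gamma$. Surjectivity of $\Gamma_{x}$ is exactly what makes both parts work.

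\emph{Pointwise Dirac property.} Write $\Kc_x:=\ker\Gamma_x\subset\Dc_{1,x_1}\times\Dc_{2,x_2}$ and let $\Pi$ be the projection that forgets the coupling components, so that $\Dc_x=\Pi(\Kc_x)$.

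First, isotropy. Take $d=(d_1,d_2)\in\Kc_x$. Its power pairing with itself splits into the pairing of $d_1$ plus that of $d_2$, each of which vanishes. The coupling contributions cancel because $f_{{\rm c}1}=f_{{\rm c}2}$ and $e_{{\rm c}1}=-e_{{\rm c}2}$. Hence $\Pi(d)$ is power-orthogonal to $\Pi(\tilde d)$ for all $d,\tilde d\in\Kc_x$, which gives $\Dc_x\subset\Dc_x^{\bot\!\!\!\bot}$.

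Second, the reverse inclusion. Let $(f_1,f_2,e_1,e_2)\in\Dc_x^{\bot\!\!\!\bot}$. The aim is to find $(f_{\rm c},e_{\rm c})$ with $(f_1,f_{\rm c},e_1,e_{\rm c})\in\Dc_{1,x_1}$ and $(f_2,f_{\rm c},e_2,-e_{\rm c})\in\Dc_{2,x_2}$. Membership in $\Dc_{1,x_1}$ is tested against all of $\Dc_{1,x_1}=\Dc_{1,x_1}^{\bot\!\!\!\bot}$, and likewise for $\Dc_{2,x_2}$. So the task is to choose $(f_{\rm c},e_{\rm c})$ such that the linear functional
\[
\phi(d_1,d_2):=\langle\!\langle (f_1,f_{\rm c},e_1,e_{\rm c}),d_1\rangle\!\rangle+\langle\!\langle (f_2,f_{\rm c},e_2,-e_{\rm c}),d_2\rangle\!\rangle
\]
vanishes on all of $\Dc_{1,x_1}\times\Dc_{2,x_2}$. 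Moreover, $\phi$ must vanish separately on the pairs $(d_1,0)$ and $(0,d_2)$, so this is what the choice has to achieve.

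On $\Kc_x$ the coupling terms cancel, so $\phi$ reduces to the pairing with $\Pi(d)$, which is zero by hypothesis, independently of $(f_{\rm c},e_{\rm c})$. Conversely, the coupling-dependent part of $\phi$ equals $\langle f_{\rm c},e_{{\rm c}1}+e_{{\rm c}2}\rangle+\langle e_{\rm c},f_{{\rm c}1}-f_{{\rm c}2}\rangle$, which is a pairing of $(e_{\rm c},f_{\rm c})$ with $\Gamma_x(d)$. Thus the non-coupling part $\phi_0$ vanishes on $\ker\Gamma_x$. Since $\Gamma_x$ is surjective onto the finite-dimensional space $\Zc_{\rm c}^2$, $\phi_0$ factors through $\Gamma_x$. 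Hence we can choose $(f_{\rm c},e_{\rm c})$ to cancel $\phi_0$, so that $\phi\equiv0$ on $\Dc_{1,x_1}\times\Dc_{2,x_2}$. Closedness of $\Dc_{i,x_i}$ and finite dimensionality of $\Zc_{\rm c}$ make $\phi_0\circ(\text{section})$ bounded, so no topological issue arises. This is the step I expect to need the most care.

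\emph{Local trivialization.} Fix $x^0$ and take the trivializations $T^1_{y_1}$ and $T^2_{y_2}$ from Definition~\ref{def-DirMod} near $x^0_1$ and $x^0_2$, with model spaces $\Wc_1$ and $\Wc_2$. Set $A_y:=\Gamma_y\circ(T^1_{y_1}\times T^2_{y_2})\in L(\Wc_1\times\Wc_2,\Zc_{\rm c}^2)$. This family is operator-norm continuous and surjective, so Lemma~\ref{lem:continuous-kernels} yields continuous isomorphisms $L_y:\ker A_{x^0}\to\ker A_y$.

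Define $T_y:=\Pi\circ(T^1_{y_1}\times T^2_{y_2})\circ L_y$, with model space $\Wc:=\ker A_{x^0}$. It is continuous in $y$ and maps onto $\Dc_y$. The remaining issue is injectivity, i.e.\ that $\Pi$ is injective on $\Kc_y$. If $\Pi(d)=0$ with $d\in\Kc_y$, then $(0,f_{\rm c},0,e_{\rm c})\in\Dc_{1,y_1}$ and $(0,f_{\rm c},0,-e_{\rm c})\in\Dc_{2,y_2}$. Pairing these against arbitrary elements of the two Dirac structures shows that $(e_{\rm c},f_{\rm c})$ annihilates the image of $\Gamma_y$. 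Surjectivity of $\Gamma_y$ then forces $f_{\rm c}=e_{\rm c}=0$.

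Finally, if $\Wc$ must be a single global Hilbert space, I would note that all kernels $\ker A_{x^0}$ are closed subspaces. I expect they can be identified with a common Hilbert space because the index is locally constant, possibly by passing to connected components.
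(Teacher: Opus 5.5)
Your proof is correct. The local-trivialization part is essentially the paper's: the same operator $A_y=\Gamma_y\circ(T^1_{y_1}\times T^2_{y_2})$, Lemma~\ref{lem:continuous-kernels} to obtain $L_y$, the projection that removes the coupling components, and injectivity from power-isotropy combined with surjectivity of $\Gamma_y$.

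The genuine difference is the pointwise Dirac property. The paper does not prove it directly. After the sign change $(f_{{\rm c}2},e_{{\rm c}2})\mapsto(-f_{{\rm c}2},-e_{{\rm c}2})$, it cites the composition theorem for split Dirac structures in \cite[Cor.~3.9(ii)]{BKvdSZ10}, which needs only finite-dimensionality of $\Zc_{\rm c}$. Your argument is self-contained instead. You get isotropy from the cancellation of coupling powers. For the reverse inclusion, you note that the non-coupling functional $\phi_0$ vanishes on $\ker\Gamma_x$, so it factors through $\Gamma_x$, and you choose $(f_{\rm c},e_{\rm c})$ to cancel it. Your route already uses the surjectivity hypothesis in the pointwise step, whereas the paper uses it only for the trivializations. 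In exchange, it avoids the external result and the sign-change reduction, and the whole proof runs on one mechanism: surjectivity of $\Gamma$ plus power-isotropy. The factoring step is purely linear-algebraic because $\Zc_{\rm c}\times\Zc_{\rm c}$ is finite-dimensional, so the boundedness concern you flag does not arise.

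Your closing paragraph is the only imprecise point. Definition~\ref{def-DirMod} does require a single global model space, but passing to connected components would not give that and is unnecessary. Since $\Wc_1$ and $\Wc_2$ are already global, every $\ker A_{x^0}$ is a closed subspace of codimension $2\dim\Zc_{\rm c}$ in the fixed space $\Wc_1\times\Wc_2$. Hence all these kernels are isomorphic to one fixed Hilbert space $\Kc$. You then precompose $T_y$ with an isomorphism $\Kc\to\ker A_{x^0}$, exactly as the paper does with $J_{x^0}$.
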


\begin{proof}
We first verify the pointwise Dirac property. Fix
$x=(x_1,x_2)\in\Xc_1\times\Xc_2$.
With respect to the decompositions
$\Zc_i\times\Zc_{\rm c}$, the spaces
$\Dc_{i,x_i}$ are split Dirac structures in the terminology of
\cite{BKvdSZ10}. After the fixed
power-pairing-preserving sign change
\[
(f_{{\rm c}2},e_{{\rm c}2})
\mapsto
(-f_{{\rm c}2},-e_{{\rm c}2}),
\]
the coupling conditions \eqref{eq:couplcond} agree with the composition
conditions used in \cite[Cor.~3.9(ii)]{BKvdSZ10}. Since
$\Zc_{\rm c}\times\Zc_{\rm c}$ is finite-dimensional, that result implies
that
\[
\Dc_{1,x_1}\circ_{\Zc_{\rm c}}\Dc_{2,x_2}
\]
is a Dirac structure.

It remains to construct local trivializations. For
$i=1,2$, set
$\mathfrak B_i
:=
(\Zc_i\times\Zc_{\rm c})^2$.
Fix
$x^0=(x_1^0,x_2^0)\in\Xc_1\times\Xc_2$.
By Definition~\ref{def-DirMod}, there exist Hilbert spaces
$\Wc_1,\Wc_2$, open neighborhoods
$U_i\subset\Xc_i$ of $x_i^0$, and operator-norm continuous families of
bounded linear isomorphisms
\[
T_{i,y_i}:\Wc_i\to\Dc_{i,y_i},
\qquad
y_i\in U_i.
\]
Set
$\Wc:=\Wc_1\times\Wc_2$,
$\Yc:=\Zc_{\rm c}\times\Zc_{\rm c}$,
and, for $y=(y_1,y_2)\in U_1\times U_2$, define
$A_y\in L(\Wc,\mathfrak B_1\times\mathfrak B_2)$
by
\[
A_y(u_1,u_2)
:=
\bigl(T_{1,y_1}u_1,T_{2,y_2}u_2\bigr).
\]
Further, define
\[
\widehat\Gamma_y
:=
\Gamma_{y_1,y_2}A_y
\in L(\Wc,\Yc).
\]
The family
$y\mapsto\widehat\Gamma_y$
is operator-norm continuous, and every
$\widehat\Gamma_y$ is surjective.

All kernels of surjective bounded operators from $\Wc$ onto the
finite-dimensional space $\Yc$ are closed subspaces of $\Wc$ of codimension
$\dim\Yc$ and are therefore mutually isomorphic. Fix once and for all a Hilbert space
$\Kc$ representing this isomorphism class and choose a bounded linear
isomorphism
\[
J_{x^0}:
\Kc\to\ker\widehat\Gamma_{x^0}.
\]
By Lemma~\ref{lem:continuous-kernels}, after shrinking
$U_1\times U_2$ if necessary, there exists an operator-norm continuous
family of bounded linear isomorphisms
\[
L_y:
\ker\widehat\Gamma_{x^0}
\to
\ker\widehat\Gamma_y.
\]
Let
$\mathcal Q:
\mathfrak B_1\times\mathfrak B_2
\to
(\Zc_1\times\Zc_2)^2$
be the projection that removes the coupling components, and define
\[
\Theta_y
:=
\mathcal QA_yL_yJ_{x^0}.
\]
By construction,
\[
\Theta_y:
\Kc
\to
\Dc_{1,y_1}\circ_{\Zc_{\rm c}}\Dc_{2,y_2}
\]
is bounded and surjective.

To prove injectivity, suppose that $\Theta_yu=0$ and set
\[
(d_1,d_2):=A_yL_yJ_{x^0}u.
\]
Since $(d_1,d_2)\in\ker\Gamma_{y_1,y_2}$ and all non-coupling components
vanish, there exist
$f_{\rm c},e_{\rm c}\in\Zc_{\rm c}$ such that
\[
d_1
=
\left(
\begin{pmatrix}
0\\
f_{\rm c}
\end{pmatrix},
\begin{pmatrix}
0\\
e_{\rm c}
\end{pmatrix}
\right),
\qquad
d_2
=
\left(
\begin{pmatrix}
0\\
f_{\rm c}
\end{pmatrix},
\begin{pmatrix}
0\\
-e_{\rm c}
\end{pmatrix}
\right).
\]
Let $a,b\in\Zc_{\rm c}$ be arbitrary. The surjectivity of
$\Gamma_{y_1,y_2}$ yields
$\widehat d_i\in\Dc_{i,y_i}$ such that
\[
\widehat f_{{\rm c}1}-\widehat f_{{\rm c}2}=a,
\qquad
\widehat e_{{\rm c}1}+\widehat e_{{\rm c}2}=b.
\]
Using the power-isotropy of $\Dc_{1,y_1}$ and $\Dc_{2,y_2}$ gives
\[
0
=
\langle f_{\rm c},b\rangle_{\Zc_{\rm c}}
+
\langle a,e_{\rm c}\rangle_{\Zc_{\rm c}}.
\]
Since $a$ and $b$ are arbitrary,
$f_{\rm c}=e_{\rm c}=0$.
Hence $d_1=d_2=0$, and the injectivity of
$A_y$, $L_y$, and $J_{x^0}$ implies $u=0$.

Thus $\Theta_y$ is a bounded linear bijection onto the interconnected
Dirac structure. Since the latter is closed, the bounded inverse theorem
shows that $\Theta_y$ is a bounded linear isomorphism. Moreover,
$y\mapsto\Theta_y$ is operator-norm continuous. The maps $\Theta_y$
therefore form a local trivialization with the fixed model space $\Kc$.
Hence
$\Dc_1\circ_{\Zc_{\rm c}}\Dc_2$
is a modulated Dirac structure.
\end{proof}
We now apply Proposition~\ref{prop:Dirac_comp} to port-Hamiltonian systems.
Let
\[
(\Dc_i,\Lc_i,\Rc_i),
\qquad i=1,2,
\]
be two port-Hamiltonian systems, and suppose that the external port space of
subsystem $i$ is decomposed as
\[
\Zc_{\Pc_i}^{\rm ext}\times\Zc_{\rm c},
\]
where $\Zc_{\rm c}$ is finite-dimensional, the first factor contains the
ports that remain external, and the second factor contains the coupling
ports. Set
\[
\Zc_i
:=
\Zc_{\Lc_i}\times\Zc_{\Rc_i}\times\Zc_{\Pc_i}^{\rm ext}.
\]
Then
\[
\Dc_{i,x_i}\subset(\Zc_i\times\Zc_{\rm c})^2.
\]
If the coupling operators from Proposition~\ref{prop:Dirac_comp} are
surjective, the interconnected system is governed by
\[
\Lc=\Lc_1\times\Lc_2,
\qquad
\Rc=\Rc_1\times\Rc_2.
\]
Let
\[
\Pi:
(\Zc_1\times\Zc_2)^2
\to
\bigl(
\Zc_{\Lc_1}\times\Zc_{\Lc_2}
\times\Zc_{\Rc_1}\times\Zc_{\Rc_2}
\times\Zc_{\Pc_1}^{\rm ext}\times\Zc_{\Pc_2}^{\rm ext}
\bigr)^2
\]
denote the fixed permutation that groups the energy-storage, resistive, and
remaining external variables. Then
\[
\Dc_{(x_1,x_2)}
=
\Pi\bigl(
\Dc_{1,x_1}\circ_{\Zc_{\rm c}}\Dc_{2,x_2}
\bigr).
\]
Since $\Pi$ preserves the power pairing, $\Dc$ is a modulated Dirac
structure. Thus the interconnected system is again port-Hamiltonian.

If the subsystems possess storage functions $\mathcal H_1$ and
$\mathcal H_2$, then the interconnected storage function is
\[
\mathcal H(x_1,x_2)
=
\mathcal H_1(x_1)+\mathcal H_2(x_2).
\]
The powers at the internal coupling ports cancel, so that
Remark~\ref{rem:general-energy-balance} applies with only the remaining
external ports.

We next introduce the particular class of state-dependent
Dirac structures of flexible subsystems used later in the article.

\begin{definition}[Finite-dimensional modulation]
\label{def:finite-dimensional-modulation}
Let $\Xc$ be a topological space, and let $\Zc$ and
$\Zc_{\rm c}$ be Hilbert spaces. Set
\[
\mathfrak B
:=
(\Zc\times\Zc_{\rm c})^2.
\]
A family
\[
\Dc=(\Dc_x)_{x\in\Xc},
\qquad
\Dc_x\subset\mathfrak B,
\]
is said to have a \emph{finite-dimensional modulation relative to
$\Zc_{\rm c}$} if there exist an integer $r\in\N_0$, a closed subspace
$\mathcal T\subset\mathfrak B$, a finite-dimensional subspace
$\mathcal W\subset\mathfrak B$, and subspaces
$\mathcal E_x\subset\mathcal W$, $x\in\Xc$, such that:
\begin{enumerate}[label=(\alph*), ref=(\alph*)]
\item
every element of $\mathcal T$ has vanishing coupling-port components, that is,
\[
\mathcal T
\subset
\setdef{
\left(
\begin{pmatrix}
f\\
0
\end{pmatrix},
\begin{pmatrix}
e\\
0
\end{pmatrix}
\right)
}{
f,e\in\Zc
};
\]

\item
$\mathcal T$ is power-isotropic and
\[
\mathcal T^{\bot\!\!\!\bot}
=
\mathcal T\oplus\mathcal W,
\qquad
\dim\mathcal W=2r;
\]

\item
for every $x\in\Xc$, the subspace $\mathcal E_x$ is
self-orthogonal in $\mathcal W$ with respect to the restricted power
pairing, that is,
\[
\mathcal E_x
=
\setdef{
w\in\mathcal W
}{
\langle\!\langle w,v\rangle\!\rangle=0
\text{ for all }v\in\mathcal E_x
};
\]

\item
for every $x\in\Xc$,
\[
\Dc_x
=
\mathcal T\oplus\mathcal E_x;
\]

\item
for every $x\in\Xc$, there exist an open neighborhood
$U_x\subset\Xc$ and linear isomorphisms
\[
S_y:\R^r\to\mathcal E_y,
\qquad
y\in U_x,
\]
such that
\[
U_x\to L(\R^r,\mathfrak B),
\qquad
y\mapsto S_y,
\]
is operator-norm continuous.
\end{enumerate}
The integer $r$ is called the \emph{defect index} of the family.
\end{definition}

\begin{lemma}
\label{lem:finite-dimensional-modulation}
Every family satisfying
Definition~\ref{def:finite-dimensional-modulation} is a modulated Dirac
structure.
\end{lemma}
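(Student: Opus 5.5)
The proof splits into the two conditions of Definition~\ref{def-DirMod}: the pointwise Dirac property and the existence of local trivializations.

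\emph{Pointwise Dirac property.} Fix $x\in\Xc$. We must show $\Dc_x^{\bot\!\!\!\bot}=\Dc_x$ for $\Dc_x=\mathcal T\oplus\mathcal E_x$.

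First, $\Dc_x$ is power-isotropic:
\begin{itemize}
\item $\mathcal T\subset\mathcal T^{\bot\!\!\!\bot}$ by assumption;
\item $\mathcal W\subset\mathcal T^{\bot\!\!\!\bot}$, so $\mathcal T$ and $\mathcal E_x$ are power-orthogonal;
\item $\mathcal E_x$ is isotropic by the self-orthogonality condition~(c).
\end{itemize}
Hence $\Dc_x\subset\Dc_x^{\bot\!\!\!\bot}$.

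For the reverse inclusion, take $z\in\Dc_x^{\bot\!\!\!\bot}$. Then $z$ is power-orthogonal to $\mathcal T$, so $z\in\mathcal T^{\bot\!\!\!\bot}=\mathcal T\oplus\mathcal W$. Write $z=t+w$ with $t\in\mathcal T$ and $w\in\mathcal W$. Since $t\in\Dc_x\subset\Dc_x^{\bot\!\!\!\bot}$, also $w=z-t\in\Dc_x^{\bot\!\!\!\bot}$, so $w$ is power-orthogonal to $\mathcal E_x$. By the self-orthogonality in~(c), $w\in\mathcal E_x$. Thus $z\in\Dc_x$, and $\Dc_x$ is a Dirac structure. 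No closedness issue arises, since power annihilators are automatically closed.

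\emph{Local trivialization.} Choose the model space $\Wc:=\mathcal T\times\R^r$; this is a Hilbert space because $\mathcal T$ is closed in $\mathfrak B$. Given $x$, take $U_x$ and $S_y$ from~(e), and set
\[
T_y(t,\xi):=t+S_y\xi,\qquad y\in U_x.
\]
Then:
\begin{itemize}
\item $T_y$ is bounded, since $S_y$ acts on a finite-dimensional space.
\item $T_y$ maps onto $\mathcal T+\mathcal E_y=\Dc_y$, because $S_y$ is onto $\mathcal E_y$.
\item $T_y$ is injective: the sum $\mathcal T\oplus\mathcal W$ is direct and $\mathcal E_y\subset\mathcal W$, so $t+S_y\xi=0$ forces $t=0$ and $S_y\xi=0$, and $S_y$ is injective.
\item Operator-norm continuity holds because $\|T_y-T_{y'}\|\le\|S_y-S_{y'}\|$.
\end{itemize}

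\emph{Expected obstacle.} The only delicate point is the reverse inclusion $\Dc_x^{\bot\!\!\!\bot}\subset\Dc_x$. It relies on using $\mathcal T^{\bot\!\!\!\bot}=\mathcal T\oplus\mathcal W$ together with isotropy of $\mathcal T$ to reduce to the finite-dimensional self-orthogonality condition~(c). The dimension $2r$ of $\mathcal W$ and the number $r$ are not needed beyond making the model space $\R^r$ consistent with~(e).
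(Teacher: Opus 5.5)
Your proof is correct and follows essentially the same route as the paper. Power-isotropy comes from the isotropy of $\mathcal T$, the inclusion $\mathcal W\subset\mathcal T^{\bot\!\!\!\bot}$ and condition~(c); the reverse inclusion comes from splitting $z\in\mathcal T^{\bot\!\!\!\bot}=\mathcal T\oplus\mathcal W$ and using self-orthogonality of $\mathcal E_x$; and the trivialization is $(t,\xi)\mapsto t+S_y\xi$ on the fixed model space $\mathcal T\times\R^r$. The paper additionally invokes the bounded inverse theorem to call this map an isomorphism, which Definition~\ref{def-DirMod} does not actually require, so your omission of it is harmless.
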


\begin{proof}
Fix $x\in\Xc$. Since
$\mathcal W\subset\mathcal T^{\bot\!\!\!\bot}$, the subspaces
$\mathcal T$ and $\mathcal W$, and hence also
$\mathcal T$ and $\mathcal E_x$, are power-orthogonal. Moreover,
$\mathcal T$ is power-isotropic by assumption, and the self-orthogonality
of $\mathcal E_x$ in $\mathcal W$ implies that $\mathcal E_x$ is
power-isotropic. Consequently,
\[
\Dc_x
=
\mathcal T\oplus\mathcal E_x
\subset
\Dc_x^{\bot\!\!\!\bot}.
\]

Conversely, let
$z\in\Dc_x^{\bot\!\!\!\bot}$.
Since $\mathcal T\subset\Dc_x$, we have
\[
z\in\mathcal T^{\bot\!\!\!\bot}
=
\mathcal T\oplus\mathcal W.
\]
Write
$z=z_{\mathcal T}+w$ with
$z_{\mathcal T}\in\mathcal T$ and $w\in\mathcal W$.
The inclusion
$z\in\Dc_x^{\bot\!\!\!\bot}$ implies that $z$ is power-orthogonal to
$\mathcal E_x$. Since $\mathcal T$ and $\mathcal E_x$ are
power-orthogonal, it follows that
\[
\langle\!\langle w,v\rangle\!\rangle=0
\qquad
\text{for every }v\in\mathcal E_x.
\]
The self-orthogonality of $\mathcal E_x$ in $\mathcal W$ therefore gives
$w\in\mathcal E_x$. Hence
$z\in\mathcal T\oplus\mathcal E_x=\Dc_x$, and thus
\[
\Dc_x=\Dc_x^{\bot\!\!\!\bot}.
\]

For $y\in U_x$, define
\[
A_y:\mathcal T\times\R^r\to\Dc_y,
\qquad
(z_{\mathcal T},v)\mapsto z_{\mathcal T}+S_yv.
\]
The direct-sum representation
$\Dc_y=\mathcal T\oplus\mathcal E_y$
shows that $A_y$ is a bounded linear bijection. Since $\Dc_y$ is closed,
the bounded inverse theorem implies that $A_y$ is an isomorphism. The
operator-norm continuity of $y\mapsto A_y$ follows from that of
$y\mapsto S_y$. Hence the maps $A_y$ provide the required local
trivializations.
\end{proof}

The following construction provides a simple criterion for producing
modulated Dirac structures.

\begin{proposition}[Modulation by power-preserving automorphisms]
\label{prop:power-preserving-modulation}
Let $\Xc$ be a topological space, let $\Zc$ be a Hilbert space, and let
$\Dc^0=(\Dc_x^0)_{x\in\Xc}$,
$\Dc_x^0\subset\Zc\times\Zc$,
be a modulated Dirac structure. Suppose that
$U_x\in L(\Zc\times\Zc)$,
$x\in\Xc$,
is an operator-norm continuous family of bounded linear automorphisms preserving the power pairing, that is,
\[
\langle\!\langle U_xz_1,U_xz_2\rangle\!\rangle
=
\langle\!\langle z_1,z_2\rangle\!\rangle
\qquad
\text{for all }z_1,z_2\in\Zc\times\Zc.
\]
Then
$\Dc=(\Dc_x)_{x\in\Xc}$,
$\Dc_x:=U_x\Dc_x^0$,
is a modulated Dirac structure.
\end{proposition}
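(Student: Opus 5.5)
The proof has two parts: the pointwise Dirac property of each $\Dc_x$, and the construction of local trivializations.

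For the pointwise Dirac property, fix $x\in\Xc$. The key identity is
\[
(U_x\mathcal M)^{\bot\!\!\!\bot}=U_x\bigl(\mathcal M^{\bot\!\!\!\bot}\bigr)
\qquad\text{for every subset }\mathcal M\subset\Zc\times\Zc .
\]
To see this, let $z\in\Zc\times\Zc$. Since $U_x$ is an automorphism, we may write $z=U_xz'$ with $z'=U_x^{-1}z$. For $m\in\mathcal M$, preservation of the power pairing gives
\[
\langle\!\langle z,U_xm\rangle\!\rangle=\langle\!\langle U_xz',U_xm\rangle\!\rangle=\langle\!\langle z',m\rangle\!\rangle .
\]
Hence $z$ is power-orthogonal to $U_x\mathcal M$ if and only if $z'$ is power-orthogonal to $\mathcal M$, that is, if and only if $z\in U_x(\mathcal M^{\bot\!\!\!\bot})$. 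Applying the identity with $\mathcal M=\Dc_x^0$ and using $\Dc_x^0=(\Dc_x^0)^{\bot\!\!\!\bot}$ yields
\[
\Dc_x^{\bot\!\!\!\bot}=U_x\bigl((\Dc_x^0)^{\bot\!\!\!\bot}\bigr)=U_x\Dc_x^0=\Dc_x .
\]
Also, $\Dc_x$ is a linear subspace because $U_x$ is linear. Here "automorphism" is read as a bounded linear bijection of $\Zc\times\Zc$. Surjectivity is what the argument above uses. Should the hypothesis only give an isometry for the power pairing, injectivity would follow from nondegeneracy of $\langle\!\langle\cdot,\cdot\rangle\!\rangle$, but surjectivity would not, so I take bijectivity as part of the assumption.

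For the local trivializations, fix $x\in\Xc$. Definition~\ref{def-DirMod} gives a Hilbert space $\Wc$, an open neighborhood $V_x$ of $x$, and an operator-norm continuous family of bounded bijections $T_y:\Wc\to\Dc_y^0$, $y\in V_x$. Define
\[
\widetilde T_y:=U_yT_y\in L(\Wc,\Zc\times\Zc),\qquad y\in V_x .
\]
\begin{itemize}
\item \emph{Bijectivity onto $\Dc_y$.} $T_y$ is a bijection onto $\Dc_y^0$ and $U_y$ is injective, so $\widetilde T_y$ is a bijection from $\Wc$ onto $U_y\Dc_y^0=\Dc_y$.
\item \emph{Continuity.} For $y,y'\in V_x$,
\[
\|U_yT_y-U_{y'}T_{y'}\|\le\|U_y-U_{y'}\|\,\|T_y\|+\|U_{y'}\|\,\|T_y-T_{y'}\| .
\]
Norms of operator-norm continuous families are locally bounded, so the right-hand side tends to $0$ as $y'\to y$. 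Hence $y\mapsto\widetilde T_y$ is operator-norm continuous.
\end{itemize}
The same model space $\Wc$ serves for every $x$, so condition~\ref{def-DirModb} holds.

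The only delicate point is the annihilator identity in the first part, which relies on $U_x$ being surjective. Everything else is routine.
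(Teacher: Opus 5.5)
Your proof is correct and follows the paper's argument: the identity $(U_x\mathcal M)^{\bot\!\!\!\bot}=U_x(\mathcal M^{\bot\!\!\!\bot})$ gives the pointwise Dirac property, and the maps $U_yT_y$ give local trivializations with the same model space $\Wc$. You additionally spell out details the paper leaves implicit, namely where surjectivity of $U_x$ enters the annihilator identity and the product estimate behind operator-norm continuity.
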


\begin{proof}
Since $U_x$ preserves the power pairing,
$(U_x\mathcal M)^{\bot\!\!\!\bot}
=
U_x\bigl(\mathcal M^{\bot\!\!\!\bot}\bigr)$
for every subspace $\mathcal M\subset\Zc\times\Zc$. Hence every
$\Dc_x$ is a Dirac structure. If
$T_y:\Wc\to\Dc_y^0$ is a local trivialization of $\Dc^0$, then
$U_yT_y:\Wc\to\Dc_y$
is a local trivialization of $\Dc$. Its operator-norm continuity follows
from that of $y\mapsto U_y$ and $y\mapsto T_y$.
\end{proof}

\begin{remark}[Several finite-dimensional components]
\label{rem:several-finite-dimensional-subsystems}
Suppose that several finite-dimensional modulated Dirac structures,
including static interconnection or termination structures, have been
combined by products and interconnections while a finite-dimensional
coupling space $\Zc_{\rm c}$ remains external. If the resulting modulated Dirac structure of the rigid subsystem
\[
\Dc_{\rm r}
=
(\Dc_{{\rm r},x_{\rm r}})_{x_{\rm r}\in\Xc_{\rm r}},
\qquad
\Dc_{{\rm r},x_{\rm r}}
\subset
(\Zc_{\rm r}\times\Zc_{\rm c})^2,
\]
and the modulated Dirac structure of the flexible subsystem
satisfy the hypotheses of
Proposition~\ref{prop:Dirac_comp}, then their interconnection is again a
modulated Dirac structure. By
Remark~\ref{rem:order-of-interconnection}, the resulting relation agrees,
up to the ordering of its components, with the relation obtained by
interconnecting all subsystems simultaneously.
\end{remark}

\section{Rigid multibody systems}
\label{sec:rigidmks}

We recall the port-Hamiltonian formulation of rigid multibody systems
developed in \cite{BHRS25}. The presentation below contains the ingredients
needed for the subsequent coupling with flexible components. Alternative
formulations based directly on configuration manifolds can be found, for
instance, in \cite{Arn89,Duindam2009}.

We use configuration variables
$\bq\in U_{\pos}\subset\R^{n_\pot}$,
generalized velocity variables
$\bv\in\R^{n_\kin}$,
generalized momentum variables
$\bp\in\R^{n_\kin}$,
and generalized force variables in $\R^{n_\kin}$.
The dimensions $n_\pot$ and $n_\kin$ need not coincide. In particular, the
configuration variables may be redundant, and dependencies between them are
retained as algebraic constraints rather than eliminated by a
parametrization.

The model data are as follows. The potential energy is described by
$\mathcal V_\pot\in C^2(U_{\pos};\R)$,
and the kinematic relation between generalized velocities and configuration rates by a continuous mapping
$Z:U_{\pos}\to\R^{n_\pot\times n_\kin}$.
The kinetic energy is
$\frac12\bv^\top M\bv$,
where $M\in\R^{n_\kin\times n_\kin}$ is symmetric. In mechanical
applications, $M$ is typically positive semidefinite and, in non-degenerate
models, positive definite. 
Gyroscopic terms are
represented by a continuous mapping
$G:\R^{n_\kin}\to\R^{n_\kin\times n_\kin}$ that takes values in the set of skew-symmetric matrices. 
Let
$U_{\rm d}\subset U_{\pos}\times\R^{n_\kin}$.
The damping force
$\bF_{\rm d}:U_{\rm d}\to\R^{n_\kin}$
is assumed to satisfy
$\bv^\top\bF_{\rm d}(\bq,\bv)\geq0$ for all $(\bq,\bv)\in U_{\rm d}$.
Configuration and velocity constraints are described by
$c\in C^2(U_{\pos};\R^k)$,
$A\in C(U_{\pos};\R^{\ell\times n_\kin})$,
through
$c(\bq)=0$ and
$A(\bq)\bv=0$.
We assume that $c'(\bq)$ and $A(\bq)$ have full row rank for every
$\bq\in U_{\pos}$.
Finally, the continuous mappings
$B_{\ext}:U_{\pos}\to\R^{n_\kin\times m_{\ext}}$,
$B_{\rm c}:U_{\pos}\to\R^{n_\kin\times m_{\rm c}}$
describe the external and coupling ports, respectively.

The rigid multibody model is the differential--algebraic system
\begin{equation}
\begin{aligned}
\dot{\bq}
&=
Z(\bq)\bv,
\\[1mm]
M\dot{\bv}
&=
-Z(\bq)^\top
\bigl(
\nabla\mathcal V_\pot(\bq)
+
c'(\bq)^\top\bm\lambda
\bigr)
-\bF_{\rm d}(\bq,\bv)
-G(M\bv)\bv
\\
&\quad
-A(\bq)^\top\bm\mu
-B_{\ext}(\bq)\bF_{\ext}
-B_{\rm c}(\bq)\bF_{\rm c},
\\[1mm]
0
&=
c(\bq),
\\
0
&=
A(\bq)\bv,
\\
\bv_{\ext}
&=
B_{\ext}(\bq)^\top\bv,
\\
\bv_{\rm c}
&=
B_{\rm c}(\bq)^\top\bv.
\end{aligned}
\label{eq:mks2}
\end{equation}
Here $\bm\lambda$ and $\bm\mu$ are the Lagrange multipliers associated with
the configuration and velocity constraints. The variables $\bv_{\ext}$ and
$\bv_{\rm c}$ are co-located with the generalized port efforts
$\bF_{\ext}$ and $\bF_{\rm c}$.

In accordance with the sign convention of
Definition~\ref{def-pH}, the products
$\bF_{\ext}^\top\bv_{\ext}$,
$\bF_{\rm c}^\top\bv_{\rm c}$
represent power delivered by the rigid subsystem through the corresponding
ports. Thus, if $\bF_{\ext}^{\rm act}$ and $\bF_{\rm c}^{\rm act}$ denote
forces acting on the rigid subsystem, then
\[
\bF_{\ext}=-\bF_{\ext}^{\rm act},
\qquad
\bF_{\rm c}=-\bF_{\rm c}^{\rm act}.
\]

We next recall the port-Hamiltonian representation of
\eqref{eq:mks2}. Its state consists of the configuration variables and generalized momenta,
\[
x_{\rm r}
=
\begin{pmatrix}
\bq\\
\bp
\end{pmatrix},
\qquad
\bp=M\bv.
\]
The Dirac structure is the family
$\Dc_{\rm r}
=
\bigl(
\Dc_{{\rm r},(\bq,\bp)}
\bigr)_{
(\bq,\bp)\in U_{\pos}\times\R^{n_\kin}
}$ with flow and effort variables ordered as
\[
f_{\rm r}
=
\begin{pmatrix}
\bv_{\Lc,f}\\
\bF_{\Lc,f}\\
\bv_{\Rc}\\
\bv_{\ext}\\
\bv_{\rm c}
\end{pmatrix},
\qquad
e_{\rm r}
=
\begin{pmatrix}
\bF_{\Lc,e}\\
\bv_{\Lc,e}\\
\bF_{\Rc}\\
\bF_{\ext}\\
\bF_{\rm c}
\end{pmatrix}.
\]
For $(\bq,\bp)\in U_{\pos}\times\R^{n_\kin}$, define
\begin{equation}
\Dc_{{\rm r},(\bq,\bp)}:=\setdef{ \parbox{3.9cm}{$\displaystyle\left(\begin{pmatrix}\bv_{\Lc,f}\\\bF_{\Lc,f}\\\bv_{\Rc}\\\bv_{\ext}\\\bv_{\rm c}\end{pmatrix},\begin{pmatrix}\bF_{\Lc,e}\\\bv_{\Lc,e}\\\bF_{\Rc}\\\bF_{\ext}\\\bF_{\rm c}\end{pmatrix}\right)$\\[3mm] $\in(\R^{n_\pot+2n_\kin+m_{\ext}+m_{\rm c}})^2$}}{\,
{\parbox{6cm}{
$\bv_{\Lc,f}=Z(\bq)\bv_{\Lc,e},\;\bv_{\Rc}=\bv_{\Lc,e},$\\[1mm] $A(\bq) \bv_{\Lc,e}=0$,\; $\bv_{\ext}=B_{\ext}(\bq)^\top \bv_{\Lc,e}$,\\[1mm] $\bv_{\rm c}=B_{\rm c}(\bq)^\top \bv_{\Lc,e}$,\\[2mm] $\exists\,\bm{\mu}\in\R^\ell:$\\
$\bF_{\Lc,f}+Z(\bq)^\top\bF_{\Lc,e}+G(\bp)\bv_{\Lc,e}+\bF_{\Rc}$\\$+B_{\ext}(\bq)\bF_{\ext}+B_{\rm c}(\bq)\bF_{\rm c}+A(\bq)^\top \bm{\mu}=0$}}\!\!}.\label{eq:rigidDirac2}\end{equation}
By \cite[Prop.~2]{BHRS25}, $\Dc_{\rm r}$ is a finite-dimensional modulated
Dirac structure.

The resistive relation is
\begin{equation}
\begin{aligned}
\Rc_{\rm r}
&=
\bigl(
\Rc_{{\rm r},(\bq,\bp)}
\bigr)_{
(\bq,\bp)\in U_{\pos}\times\R^{n_\kin}
},\\
\Rc_{{\rm r},(\bq,\bp)}
&:=
\setdef{
(\bv,\bF)\in(\R^{n_\kin})^2
}{
(\bq,\bv)\in U_{\rm d},
\quad
\bF=\bF_{\rm d}(\bq,\bv)
}.
\end{aligned}
\label{eq:MKSresrel2}
\end{equation}
It is independent of $\bp$ and is resistive by the assumed inequality
$\bv^\top\bF_{\rm d}(\bq,\bv)\geq0$.
The energy-storage relation is
\begin{equation}
\Lc_{\rm r}
:=
\setdef{
\left(
\begin{pmatrix}
\bq\\
\bp
\end{pmatrix},
\begin{pmatrix}
\bF\\
\bv
\end{pmatrix}
\right)
\in
(\R^{n_\pot+n_\kin})^2
}{
\begin{aligned}
&\bq\in U_{\pos},
\qquad
M\bv=\bp,
\qquad
c(\bq)=0,
\\
&\exists\,\bm\lambda\in\R^k:
\quad
\bF
=
\nabla\mathcal V_\pot(\bq)
+
c'(\bq)^\top\bm\lambda
\end{aligned}
}.
\label{eq:LagrMech}
\end{equation}
It is shown in \cite[Prop.~5]{BHRS25} that $\Lc_{\rm r}$ is a Lagrangian
submanifold.

With
\[
\Zc_{\rm r}
:=
\R^{n_\pot+n_\kin}
\times
\R^{n_\kin}
\times
\R^{m_{\ext}}
\qquad\text{and}\qquad
\Zc_{\rm c}
:=
\R^{m_{\rm c}},
\]
the Dirac structure of the rigid subsystem satisfies $\Dc_{{\rm r},(\bq,\bp)}
\subset
(\Zc_{\rm r}\times\Zc_{\rm c})^2$.
It therefore has precisely the form required for the first subsystem in
Proposition~\ref{prop:Dirac_comp}.

\section{Flexible components}
\label{sec:flexiblemks}

\subsection{A fixed port-Hamiltonian PDE structure}
\label{sec:fixed-flexible-structure}
We first introduce a fixed port-Hamiltonian PDE structure for flexible
components on a one-dimensional spatial domain, following
\cite{AugnerDis,Vill07,le2005dirac,JZ12}. The class includes, among others,
vibrating strings as well as Euler--Bernoulli and Timoshenko beams. The
state dependence required for the coupled systems considered below will be
confined to finitely many port variables; the differential operator and its
domain remain fixed.

Throughout this section, ports are written in the generator convention.
Thus the duality product of flow and effort is the power delivered by the
subsystem to its environment. For mechanical ports, flows are velocities or
angular velocities, whereas efforts have the sign of reaction forces or
reaction moments, that is, the opposite sign of forces or moments acting on
the subsystem.

Let $N,d\in\N$, let $P_0,\ldots,P_N\in\R^{d\times d}$ satisfy
$P_k=(-1)^{k+1}P_k^\top$ for $k=0,\ldots,N$, and assume that $P_N$ is
invertible. Further, let
$D,H\in L^\infty([a,b];\R^{d\times d})$, where $D+D^\top$ is pointwise
positive semidefinite, and $H$ is pointwise symmetric and positive definite
with $H^{-1}\in L^\infty([a,b];\R^{d\times d})$. Finally, let
$B_0\in L^2([a,b];\R^{d\times m_{\rm dis}})$, where
$m_{\rm dis}\in\N_0$.

The corresponding reference system is
\begin{equation}
\begin{aligned}
\tfrac{\partial x}{\partial t}(\xi,t)
&=
-D(\xi)H(\xi)x(\xi,t)
+
\sum_{k=0}^N
P_k\tfrac{\partial^k}{\partial\xi^k}
\bigl(H(\xi)x(\xi,t)\bigr)
-
B_0(\xi)\widehat e_{\rm dis}(t),
\\
\widehat f_{\rm dis}(t)
&=
\int_a^b
B_0(\xi)^\top H(\xi)x(\xi,t)\,{\rm d}\xi .
\end{aligned}
\label{eq:BH1gen}
\end{equation}
Here $\widehat e_{\rm dis}$ is the effort acting through the fixed
distributed profile $B_0$, and $\widehat f_{\rm dis}$ is its co-located flow.
We write $x(t):=x(\cdot,t)$ for the spatial state.

The boundary variables are defined by
\begin{equation}
f_\partial(t)
=
W_f\gamma\bigl(Hx(t)\bigr),
\qquad
e_\partial(t)
=
W_e\gamma\bigl(Hx(t)\bigr),
\label{eq:flixBnd}
\end{equation}
where
\begin{equation}
\gamma:
H^N([a,b];\R^d)\to\R^{2Nd},
\ z\mapsto
\begin{pmatrix}
z(b)\\
\vdots\\
z^{(N-1)}(b)\\
z(a)\\
\vdots\\
z^{(N-1)}(a)
\end{pmatrix}
\label{eq:BNDTrace}
\end{equation}
is the boundary trace operator. We choose $W_f,W_e\in\R^{Nd\times 2Nd}$ such that
$W:=\begin{smallbmatrix}W_f\\W_e\end{smallbmatrix}$ is invertible and,
with
$\Sigma:=\begin{smallbmatrix}0&I_{Nd}\\I_{Nd}&0\end{smallbmatrix}$,
the Green identity
\begin{equation}
\left\langle
z,
\sum_{k=0}^N P_k z^{(k)}
\right\rangle_{L^2([a,b];\R^d)}
=
-\frac12\,
\gamma(z)^\top W^\top\Sigma W\gamma(z)
\qquad
\text{for all }z\in H^N([a,b];\R^d)
\label{eq:BNDtriplet}
\end{equation}
holds. For
$f_\partial=W_f\gamma(z)$ and
$e_\partial=W_e\gamma(z)$,
the right-hand side of \eqref{eq:BNDtriplet} equals
$-f_\partial^\top e_\partial$.
Such boundary configurations exist; see
\cite[Lem.~3.4 and Def.~3.5]{le2005dirac}.
The resulting boundary relation is a Dirac structure by
\cite[Thm.~3.6]{le2005dirac}; see also
\cite[Thm.~2.7]{Vill07}. In applications, $W_f$ and $W_e$ may be chosen
directly in terms of the physical boundary velocities, forces, angular
velocities, and moments.

\begin{proposition}
\label{Prop:Diracflex0}
Under the assumptions above, the relation
\begin{equation}
\Dc_\partial
:=
\setdef{
\begin{aligned}
&\;\;\left(
\begin{pmatrix}
f\\
f_\partial
\end{pmatrix},
\begin{pmatrix}
e\\
e_\partial
\end{pmatrix}
\right)\\
&\in
\bigl(
L^2([a,b];\R^d)\times\R^{Nd}
\bigr)^2
\end{aligned}}{
\begin{aligned}
&e\in H^N([a,b];\R^d),
\\
&f=\sum_{k=0}^N P_ke^{(k)},\;
\begin{pmatrix}
f_\partial\\
e_\partial
\end{pmatrix}
=
W\gamma(e)
\end{aligned}
}
\label{eq:boundary-dirac}
\end{equation}
is a Dirac structure.
\end{proposition}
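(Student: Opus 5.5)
The plan is to prove the two inclusions $\Dc_\partial\subset\Dc_\partial^{\bot\!\!\!\bot}$ and $\Dc_\partial^{\bot\!\!\!\bot}\subset\Dc_\partial$ directly. The main tools are the Green identity \eqref{eq:BNDtriplet}, the invertibility of $W$, and the surjectivity of the trace $\gamma$ from $H^N([a,b];\R^d)$ onto $\R^{2Nd}$. Throughout, write $\mathfrak J e:=\sum_{k=0}^N P_ke^{(k)}$ for $e\in H^N([a,b];\R^d)$.

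\emph{Isotropy.} First polarize \eqref{eq:BNDtriplet}. Applying it to $e_1+e_2$, $e_1$ and $e_2$ gives
\[
\langle e_1,\mathfrak Je_2\rangle+\langle e_2,\mathfrak Je_1\rangle
=-\gamma(e_1)^\top W^\top\Sigma W\gamma(e_2),
\]
since $W^\top\Sigma W$ is symmetric. For $f_{\partial,i}=W_f\gamma(e_i)$ and $e_{\partial,i}=W_e\gamma(e_i)$, the right-hand side equals $-(f_{\partial,1}^\top e_{\partial,2}+f_{\partial,2}^\top e_{\partial,1})$. The power pairing on $L^2\times\R^{Nd}$ is $\langle f_1,e_2\rangle+\langle f_2,e_1\rangle+f_{\partial,1}^\top e_{\partial,2}+f_{\partial,2}^\top e_{\partial,1}$. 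Substituting $f_i=\mathfrak Je_i$ shows that it vanishes, so $\Dc_\partial$ is power-isotropic.

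\emph{Maximality.} Let $(f,f_\partial,e,e_\partial)$ be power-orthogonal to $\Dc_\partial$. For every test function $\varphi\in C_c^\infty((a,b);\R^d)$, the element built from $e=\varphi$ lies in $\Dc_\partial$ and has vanishing boundary part. Orthogonality then gives $\langle f,\varphi\rangle+\langle\mathfrak J\varphi,e\rangle=0$. Using $P_k^\top=(-1)^{k+1}P_k$, the formal adjoint of $\mathfrak J$ is $-\mathfrak J$. Hence $\sum_k P_k e^{(k)}=f$ holds in the distributional sense.

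Because $P_N$ is invertible, $e^{(N)}=P_N^{-1}\bigl(f-\sum_{k<N}P_ke^{(k)}\bigr)$ distributionally. A bootstrap argument then gives $e\in H^N$ and $f=\mathfrak Je$. To run it, integrate $N$ times, noting that the lower-order terms involve only distributional derivatives of lower order of an $L^2$ function. Alternatively, one can rewrite the equation as $P_N(e-\psi)^{(N)}+\ldots=0$ and use standard regularity of constant-coefficient ODEs with invertible leading coefficient. I expect this regularity step to be the main obstacle, and I would handle it with the integration/bootstrap argument.

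Once $e\in H^N$ and $f=\mathfrak Je$ are known, test against an arbitrary $\tilde e\in H^N$. Polarized Green gives
\[
\langle f,\tilde e\rangle+\langle\mathfrak J\tilde e,e\rangle=-\gamma(e)^\top W^\top\Sigma W\gamma(\tilde e).
\]
Orthogonality therefore reduces to
\[
\bigl(\Sigma\begin{smallpmatrix}f_\partial\\ e_\partial\end{smallpmatrix}-\Sigma W\gamma(e)\bigr)^\top W\gamma(\tilde e)=0 .
\]
Since $\gamma$ is surjective onto $\R^{2Nd}$ (take polynomials or smooth cutoffs with prescribed derivatives at $a$ and $b$) and $W$ is invertible, the vector $W\gamma(\tilde e)$ ranges over all of $\R^{2Nd}$. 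With $\Sigma$ invertible, this yields $(f_\partial,e_\partial)=W\gamma(e)$, so the element lies in $\Dc_\partial$.
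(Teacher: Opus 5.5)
Your proof is correct, but it takes a different route from the paper, which gives no direct argument. The paper invokes the boundary Dirac structure theorem \cite[Thm.~3.6]{le2005dirac} (see also \cite[Thm.~2.7]{Vill07}), adapted to its generator sign convention. It only remarks that \eqref{eq:BNDtriplet} is the relevant Green identity and that invertibility of $W$ parametrizes the boundary traces by $(f_\partial,e_\partial)$; the latter is exactly your final step. Your proof is a self-contained version of the cited result, carried out in the form and sign convention in which the paper states its hypotheses, so no translation of conventions from the literature is needed. It also shows where each assumption enters. The parity conditions $P_k=(-1)^{k+1}P_k^\top$ make $\mathfrak J$ formally skew-adjoint, which gives isotropy via polarized Green and $f=\mathfrak Je$ distributionally. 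Invertibility of $P_N$ yields the regularity $e\in H^N$. Surjectivity of $\gamma$, together with invertibility of $W$ and $\Sigma$, forces $(f_\partial,e_\partial)=W\gamma(e)$. The price is the regularity step, which the citation hides, but your integration sketch goes through. Let $U_j$ be the $j$-fold primitive of $e$ (so $U_0=e$) and $G$ an $N$-fold primitive of $f$. The distributional identity says $\bigl(\sum_{k=0}^N P_kU_{N-k}-G\bigr)^{(N)}=0$, hence $P_Ne=G+p-\sum_{k<N}P_kU_{N-k}$ with a polynomial $p$ of degree $<N$. If $e\in H^m$, the right-hand side lies in $H^{\min(m+1,N)}$, so after at most $N$ rounds $e\in H^N$ and $f=\mathfrak Je$ holds in $L^2$.
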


\begin{proof}
This is the boundary Dirac structure from
\cite[Thm.~3.6]{le2005dirac}; see also
\cite[Thm.~2.7]{Vill07}, adapted to the generator sign convention used
here. Indeed, \eqref{eq:BNDtriplet} is the corresponding Green identity, while
the invertibility of $W$ ensures that the boundary traces are parametrized
by the flow and effort variables.
\end{proof}

We next record a simple augmentation principle that allows bounded internal
and distributed ports to be appended to a Dirac structure.

\begin{lemma}
\label{lem:Diracext}
Let $\Zc$ and $\Zc_0$ be Hilbert spaces, let
$\Dc\subset\Zc\times\Zc$ be a Dirac structure, and let
$C\in L(\Zc,\Zc_0)$. Then
\[
\Dc_{\rm aug}
:=
\setdef{
\left(
\begin{pmatrix}
f-C^*e_0\\
Ce
\end{pmatrix},
\begin{pmatrix}
e\\
e_0
\end{pmatrix}
\right)
}{
(f,e)\in\Dc,\quad e_0\in\Zc_0
}
\subset
(\Zc\times\Zc_0)^2
\]
is a Dirac structure.
\end{lemma}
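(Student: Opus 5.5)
The plan is to realize $\Dc_{\rm aug}$ as the image of the Dirac structure $\Dc\times(\Zc_0\times\Zc_0)$-type object under a power-pairing-preserving automorphism, and then use the argument of Proposition~\ref{prop:power-preserving-modulation} (pointwise, without any parameter dependence). Concretely, consider on $(\Zc\times\Zc_0)^2$ the operator
\[
U\left(\begin{pmatrix} f\\ f_0\end{pmatrix},\begin{pmatrix} e\\ e_0\end{pmatrix}\right)
:=\left(\begin{pmatrix} f-C^*e_0\\ f_0+Ce\end{pmatrix},\begin{pmatrix} e\\ e_0\end{pmatrix}\right).
\]
It is bounded and invertible, with inverse obtained by replacing $C$ by $-C$. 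The first key step is to verify that $U$ preserves the power pairing. This reduces to the identity
\[
\langle f_1-C^*e_{0,1},e_2\rangle+\langle f_{0,1}+Ce_1,e_{0,2}\rangle+\bigl(1\leftrightarrow2\bigr)
=\langle f_1,e_2\rangle+\langle f_{0,1},e_{0,2}\rangle+\bigl(1\leftrightarrow2\bigr).
\]
Here the extra terms $-\langle e_{0,1},Ce_2\rangle+\langle Ce_1,e_{0,2}\rangle-\langle e_{0,2},Ce_1\rangle+\langle Ce_2,e_{0,1}\rangle$ cancel.

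The second step is to identify the starting structure. Let $\Dc_0:=\{(0,e_0):e_0\in\Zc_0\}\subset\Zc_0\times\Zc_0$. This is a Dirac structure, since $\langle\!\langle(f_0,e_0'),(0,e_0)\rangle\!\rangle=\langle f_0,e_0\rangle$ vanishes for all $e_0$ iff $f_0=0$. By property (i) after Definition~\ref{def-pH}, the product $\Dc\times\Dc_0$ is a Dirac structure in $(\Zc\times\Zc_0)^2$. Comparing with the definition shows that $U(\Dc\times\Dc_0)=\Dc_{\rm aug}$.

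The final step is to conclude. As in the proof of Proposition~\ref{prop:power-preserving-modulation}, a pairing-preserving automorphism satisfies $(U\mathcal M)^{\bot\!\!\!\bot}=U(\mathcal M^{\bot\!\!\!\bot})$. Hence $\Dc_{\rm aug}^{\bot\!\!\!\bot}=U(\Dc\times\Dc_0)=\Dc_{\rm aug}$. No real obstacle is expected. The only care needed is checking that the product of Dirac structures is again Dirac in the infinite-dimensional setting (stated earlier as immediate) and getting the sign pattern in $U$ right so that the cross terms cancel.
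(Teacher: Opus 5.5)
Your proof is correct, but it takes a different route from the paper. The paper argues directly. It first checks power-isotropy, where the $C$- and $C^*$-terms cancel exactly as in your computation. It then takes an arbitrary $((g_1,g_0),(h_1,h_0))\in\Dc_{\rm aug}^{\bot\!\!\!\bot}$, tests it against all of $\Dc_{\rm aug}$, and reads off $(g_1+C^*h_0,h_1)\in\Dc^{\bot\!\!\!\bot}=\Dc$ and $g_0=Ch_1$. This exhibits the element as a member of $\Dc_{\rm aug}$ with $f=g_1+C^*h_0$, $e=h_1$, $e_0=h_0$.

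You instead factor the construction into two steps. You first append the trivial Dirac structure $\Dc_0$ (zero flow, free effort). You then apply the flow shear $U$ generated by the skew-adjoint operator $\begin{smallbmatrix}0&-C^*\\ C&0\end{smallbmatrix}$ on $\Zc\times\Zc_0$, and conclude with the invariance $(U\mathcal M)^{\bot\!\!\!\bot}=U(\mathcal M^{\bot\!\!\!\bot})$ for bijective pairing-preserving $U$. All three ingredients check out: pairing preservation, invertibility of $U$, and $U(\Dc\times\Dc_0)=\Dc_{\rm aug}$. Your opening phrase about a ``$\Dc\times(\Zc_0\times\Zc_0)$-type object'' should simply read $\Dc\times\Dc_0$.

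The paper's argument is shorter and self-contained; it needs neither the product property nor the invariance identity. Yours is more conceptual and places the lemma inside the paper's modulation framework. It shows that appending bounded resistive or distributed ports is itself a power-preserving transformation, namely the flow-side counterpart of the effort shear $e_{\rm m}\mapsto e_{\rm m}+Kf_{\rm m}$ in Section~\ref{sec:flexible-port-modulation}. It also gives, via Proposition~\ref{prop:power-preserving-modulation}, that an operator-norm continuous family $x\mapsto C_x$ (e.g.\ a state-dependent profile $B_0$) yields a modulated Dirac structure $\bigl(U_x(\Dc\times\Dc_0)\bigr)_x$.
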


\begin{proof}
For $(f,e),(\widehat f,\widehat e)\in\Dc$ and
$e_0,\widehat e_0\in\Zc_0$, the additional terms in the power pairing
cancel:
\[
\left\langle
\begin{pmatrix}
\widehat f-C^*\widehat e_0\\
C\widehat e
\end{pmatrix},
\begin{pmatrix}
e\\
e_0
\end{pmatrix}
\right\rangle
+
\left\langle
\begin{pmatrix}
f-C^*e_0\\
Ce
\end{pmatrix},
\begin{pmatrix}
\widehat e\\
\widehat e_0
\end{pmatrix}
\right\rangle
=
\langle\widehat f,e\rangle
+
\langle f,\widehat e\rangle
=
0.
\]
Thus $\Dc_{\rm aug}$ is power-isotropic.

Conversely, let
\[
\left(
\begin{pmatrix}
g_1\\
g_0
\end{pmatrix},
\begin{pmatrix}
h_1\\
h_0
\end{pmatrix}
\right)
\in
\Dc_{\rm aug}^{\bot\!\!\!\bot}.
\]
Testing against all elements of $\Dc_{\rm aug}$ gives
\[
\langle f,h_1\rangle
+
\langle g_1+C^*h_0,e\rangle
+
\langle g_0-Ch_1,e_0\rangle
=
0
\]
for all $(f,e)\in\Dc$ and all $e_0\in\Zc_0$. Hence
\[
(g_1+C^*h_0,h_1)\in\Dc^{\bot\!\!\!\bot}=\Dc,
\qquad
g_0=Ch_1.
\]
It follows directly that the element belongs to $\Dc_{\rm aug}$.
\end{proof}

Let $J\subset[a,b]$ be measurable and assume that
$D(\xi)=0$ for almost every $\xi\in[a,b]\setminus J$.
For a Hilbert space $\Xc$, let
${\rm Restr}_J:L^2([a,b];\Xc)\to L^2(J;\Xc)$
denote the restriction operator. Its adjoint $({\rm Restr}_J)^*$ is the extension-by-zero operator. One may, in particular, choose
$J=\operatorname{ess\,supp}D$.

\begin{proposition}
\label{Prop:Diracflex}
Under the assumptions above, the relation
\begin{multline}
\widehat{\Dc}
=
\setdef{
\left(
\begin{pmatrix}
f\\
f_\Rc\\
\widehat f_{\rm dis}\\
f_\partial
\end{pmatrix},
\begin{pmatrix}
e\\
e_\Rc\\
\widehat e_{\rm dis}\\
e_\partial
\end{pmatrix}
\right)
}{
\begin{aligned}
&e\in H^N([a,b];\R^d),
\\
&f
=
\sum_{k=0}^N P_ke^{(k)}
-
({\rm Restr}_J)^*e_\Rc
-
B_0\widehat e_{\rm dis},
\\
&f_\Rc={\rm Restr}_J e,
\\
&\widehat f_{\rm dis}
=
\int_a^b B_0(\xi)^\top e(\xi)\,{\rm d}\xi,
\\
&\begin{pmatrix}
f_\partial\\
e_\partial
\end{pmatrix}
=
W\gamma(e)
\end{aligned}
}
\\
\subset
\bigl(
L^2([a,b];\R^d)
\times L^2(J;\R^d)
\times\R^{m_{\rm dis}}
\times\R^{Nd}
\bigr)^2
\label{eq:flexDirac}
\end{multline}
is a Dirac structure.
\end{proposition}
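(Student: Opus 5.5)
The plan is to obtain $\widehat{\Dc}$ from the boundary Dirac structure $\Dc_\partial$ of Proposition~\ref{Prop:Diracflex0} by a single application of the augmentation principle in Lemma~\ref{lem:Diracext}, followed by a power-pairing-preserving reordering of the components.

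\textbf{Choice of spaces and operator.} We take
\[
\Zc:=L^2([a,b];\R^d)\times\R^{Nd}
\qquad\text{and}\qquad
\Zc_0:=L^2(J;\R^d)\times\R^{m_{\rm dis}}.
\]
We then define $C\in L(\Zc,\Zc_0)$ by
\[
C\begin{pmatrix}e\\ e_\partial\end{pmatrix}
:=\begin{pmatrix}{\rm Restr}_J e\\ \int_a^b B_0(\xi)^\top e(\xi)\,{\rm d}\xi\end{pmatrix}.
\]
Thus $C$ acts only on the distributed effort and ignores the boundary effort. Boundedness is immediate. The restriction operator has norm at most one. The integral map is bounded by Cauchy--Schwarz, since $B_0\in L^2([a,b];\R^{d\times m_{\rm dis}})$.

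\textbf{Computing the adjoint.} Next we compute $C^*$. For $(e_\Rc,\widehat e_{\rm dis})\in\Zc_0$ we have
\[
C^*\begin{pmatrix}e_\Rc\\ \widehat e_{\rm dis}\end{pmatrix}
=\begin{pmatrix}({\rm Restr}_J)^*e_\Rc+B_0\widehat e_{\rm dis}\\ 0\end{pmatrix}.
\]
This follows from the identity
\[
\Big\langle \int_a^b B_0^\top e\,{\rm d}\xi,\widehat e_{\rm dis}\Big\rangle
=\langle e,B_0\widehat e_{\rm dis}\rangle_{L^2},
\]
together with the stated fact that $({\rm Restr}_J)^*$ is extension by zero.

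\textbf{Identifying the augmented structure.} Lemma~\ref{lem:Diracext} applied to $\Dc_\partial$ shows that the set of all
\[
\left(\begin{pmatrix} f-({\rm Restr}_J)^*e_\Rc-B_0\widehat e_{\rm dis}\\ f_\partial\\ {\rm Restr}_J e\\ \int_a^b B_0^\top e\,{\rm d}\xi\end{pmatrix},
\begin{pmatrix} e\\ e_\partial\\ e_\Rc\\ \widehat e_{\rm dis}\end{pmatrix}\right),
\]
with $(f,f_\partial,e,e_\partial)\in\Dc_\partial$ and $(e_\Rc,\widehat e_{\rm dis})\in\Zc_0$ arbitrary, is a Dirac structure. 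Substituting $f=\sum_k P_k e^{(k)}$ and $(f_\partial,e_\partial)=W\gamma(e)$, this set coincides with $\widehat{\Dc}$ after one step. That step is a fixed permutation which moves the boundary components $(f_\partial,e_\partial)$ to the last position, applied identically to flows and efforts.

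\textbf{Invariance under the permutation.} Such a coordinate permutation of $\Zc\times\Zc_0$, applied simultaneously to the flow and effort parts, is a unitary map of the product space. It therefore preserves the power pairing. Hence it maps power annihilators to power annihilators, and so maps Dirac structures to Dirac structures. This is the same observation used in the proof of Proposition~\ref{prop:power-preserving-modulation}.

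\textbf{Main obstacle.} There is essentially no analytic obstacle, since the Dirac property of $\Dc_\partial$ is already available from Proposition~\ref{Prop:Diracflex0}. The only point needing care is bookkeeping. One has to check the correct sign and placement of $C^*$ so that the augmented flow is exactly $f-({\rm Restr}_J)^*e_\Rc-B_0\widehat e_{\rm dis}$. One also has to confirm that the boundary effort component receives no contribution from $C^*$, which holds because $C$ does not depend on $e_\partial$.
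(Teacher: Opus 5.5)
Your proposal is correct and matches the paper's proof: the paper also applies Lemma~\ref{lem:Diracext} to $\Dc_\partial$ with exactly this operator $C$ and then uses a fixed reordering of the flow and effort variables. Your explicit computation of $C^*$, including the check that the boundary component receives no contribution, fills in bookkeeping the paper leaves implicit.
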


\begin{proof}
Apply Lemma~\ref{lem:Diracext} to the Dirac structure
$\Dc_\partial$ from Proposition~\ref{Prop:Diracflex0} and the bounded
operator
\[
C:
L^2([a,b];\R^d)\times\R^{Nd}
\to
L^2(J;\R^d)\times\R^{m_{\rm dis}},
\ (e,e_\partial) \mapsto
\begin{pmatrix}
{\rm Restr}_J e\\[1mm]
\displaystyle
\int_a^b B_0(\xi)^\top e(\xi)\,{\rm d}\xi
\end{pmatrix}.
\]
After a fixed reordering of the flow and effort variables, the augmented
relation is precisely \eqref{eq:flexDirac}.
\end{proof}

The energy-storage relation is
\begin{equation}
\Lc_{\rm f}
=
\setdef{
(x,Hx)
}{
x\in L^2([a,b];\R^d)
}.
\label{eq:flexLagr}
\end{equation}
Since the multiplication operator induced by $H$ is bounded and
self-adjoint, Proposition~\ref{prop:infdimlagr} implies that
$\Lc_{\rm f}$ is a Lagrangian submanifold.

The resistive relation is
\begin{equation}
\Rc_{\rm f}
=
\setdef{
(f_\Rc,D|_Jf_\Rc)
}{
f_\Rc\in L^2(J;\R^d)
}.
\label{eq:flexRes}
\end{equation}
It is resistive since
\[
\langle f_\Rc,D|_Jf_\Rc\rangle
=
\tfrac12
\langle
f_\Rc,(D+D^\top)|_Jf_\Rc
\rangle
\geq0.
\]
The port-Hamiltonian system governed by
$\widehat{\Dc}$, $\Lc_{\rm f}$, and $\Rc_{\rm f}$ is precisely the reference
system \eqref{eq:BH1gen} with boundary ports \eqref{eq:flixBnd}.

\subsection{Finite-dimensional modulation of flexible port variables}
\label{sec:flexible-port-modulation}

The Dirac structure $\widehat{\Dc}$ from
Proposition~\ref{Prop:Diracflex} describes a fixed differential operator
together with its resistive, distributed, and boundary ports. We now
introduce state dependence through a power-preserving transformation of
finitely many port variables.

Let $\Zc_0$ be a Hilbert space, let $\Zc_{\rm m}$ be finite-dimensional,
and let
$\widehat{\Dc}
\subset
(\Zc_0\times\Zc_{\rm m})^2$
be a fixed Dirac structure. The space $\Zc_0$ contains the distributed
variables and all port variables that are not modulated, whereas
$\Zc_{\rm m}$ contains the finite-dimensional port variables on which the
modulation acts.

Let $\Xc_{\rm f}$ be a topological space and let
$K:\Xc_{\rm f}\to L(\Zc_{\rm m})$ be operator-norm continuous and pointwise skew-adjoint.
For $x_{\rm f}\in\Xc_{\rm f}$, define
\[
U_{x_{\rm f}}
\left(
\begin{pmatrix}
f_0\\
f_{\rm m}
\end{pmatrix},
\begin{pmatrix}
e_0\\
e_{\rm m}
\end{pmatrix}
\right)
:=
\left(
\begin{pmatrix}
f_0\\
f_{\rm m}
\end{pmatrix},
\begin{pmatrix}
e_0\\
e_{\rm m}+K(x_{\rm f})f_{\rm m}
\end{pmatrix}
\right).
\]
Thus
\[
U_{x_{\rm f}}
\in
L\bigl((\Zc_0\times\Zc_{\rm m})^2\bigr).
\]
The transformation is invertible, with inverse obtained by replacing
$K(x_{\rm f})$ by $-K(x_{\rm f})$. Moreover,
\[
\langle\!\langle U_{x_{\rm f}}z_1,U_{x_{\rm f}}z_2\rangle\!\rangle
-
\langle\!\langle z_1,z_2\rangle\!\rangle
=
\langle f_{{\rm m},1},K(x_{\rm f})f_{{\rm m},2}\rangle_{\Zc_{\rm m}}
+
\langle f_{{\rm m},2},K(x_{\rm f})f_{{\rm m},1}\rangle_{\Zc_{\rm m}}
=
0.
\]
Thus $U_{x_{\rm f}}$ preserves the power pairing. Regarding
$\widehat{\Dc}$ as the constant modulated family
$\Dc_{x_{\rm f}}^0:=\widehat{\Dc}$ on $\Xc_{\rm f}$,
Proposition~\ref{prop:power-preserving-modulation} implies that
$\Dc_{\rm f}
=
\bigl(
\Dc_{{\rm f},x_{\rm f}}
\bigr)_{x_{\rm f}\in\Xc_{\rm f}}$,
$\Dc_{{\rm f},x_{\rm f}}
:=
U_{x_{\rm f}}\widehat{\Dc}$,
is a modulated Dirac structure. The differential operator and its domain
remain unchanged; all state dependence is confined to $\Zc_{\rm m}$.

\begin{remark}[Relation with finite-dimensional modulation]
\label{rem:automorphism-finite-dimensional-modulation}
Suppose that the ports intended for a subsequent interconnection form a
finite-dimensional coupling space $\Zc_{\rm c}$ and that
$U_{x_{\rm f}}$ acts only on the corresponding coupling variables. Let
$P_{\rm c}$ denote the projection onto all coupling-flow and coupling-effort
components and set
$\mathcal T
:=
\widehat{\Dc}\cap\ker P_{\rm c}$.
Then $\mathcal T$ is a closed power-isotropic subspace with vanishing
coupling components, and
\[
U_{x_{\rm f}}z=z
\qquad
\text{for all }z\in\mathcal T.
\]
Set
$r:=\dim(\widehat{\Dc}/\mathcal T)$, where $\widehat{\Dc}/\mathcal T$ denotes the
quotient space of $\widehat{\Dc}$ by $\mathcal T$. Since the restriction of $P_{\rm c}$ to $\widehat{\Dc}$ has kernel
$\mathcal T$, it induces an injective map
\[
\widehat{\Dc}/\mathcal T
\longrightarrow
\Zc_{\rm c}\times\Zc_{\rm c}.
\]
Hence $r<\infty$.
The power pairing induces a bilinear pairing
\[
\bigl(
\mathcal T^{\bot\!\!\!\bot}/\widehat{\Dc}
\bigr)
\times
\bigl(
\widehat{\Dc}/\mathcal T
\bigr)
\to\R,
\
([z],[d])
\mapsto
\langle\!\langle z,d\rangle\!\rangle.
\]
This pairing is well-defined. It is non-degenerate in the first argument
because an element of $\mathcal T^{\bot\!\!\!\bot}$ that is
power-orthogonal to $\widehat{\Dc}$ belongs to
$\widehat{\Dc}^{\bot\!\!\!\bot}=\widehat{\Dc}$. It is non-degenerate in
the second argument because an element of $\widehat{\Dc}$ that is
power-orthogonal to $\mathcal T^{\bot\!\!\!\bot}$ belongs to
\[
\bigl(
\mathcal T^{\bot\!\!\!\bot}
\bigr)^{\bot\!\!\!\bot}
=
\mathcal T,
\]
where we have used that $\mathcal T$ is closed. Non-degeneracy in the first argument induces an injective map from
$\mathcal T^{\bot\!\!\!\bot}/\widehat{\Dc}$ into the dual of the
$r$-dimensional space $\widehat{\Dc}/\mathcal T$. Hence
$\mathcal T^{\bot\!\!\!\bot}/\widehat{\Dc}$ is finite-dimensional and has
dimension at most $r$. Non-degeneracy in the second argument gives the
reverse inequality. Consequently,
\[
\dim\bigl(
\mathcal T^{\bot\!\!\!\bot}/\widehat{\Dc}
\bigr)
=
\dim\bigl(
\widehat{\Dc}/\mathcal T
\bigr)
=
r,
\]
and therefore
\[
\dim\bigl(
\mathcal T^{\bot\!\!\!\bot}/\mathcal T
\bigr)
=
2r.
\]
The induced power pairing on
$\mathcal T^{\bot\!\!\!\bot}/\mathcal T$
is non-degenerate. Indeed, if $z\in\mathcal T^{\bot\!\!\!\bot}$ is
power-orthogonal to all of $\mathcal T^{\bot\!\!\!\bot}$, then
\[
z\in
\bigl(
\mathcal T^{\bot\!\!\!\bot}
\bigr)^{\bot\!\!\!\bot}
=
\mathcal T,
\]
where the last equality follows from the closedness of $\mathcal T$.

Choose a finite-dimensional subspace $\mathcal W$ such that
\[
\mathcal T^{\bot\!\!\!\bot}
=
\mathcal T\oplus\mathcal W,
\qquad
\dim\mathcal W=2r,
\]
and let
$P_{\mathcal W}:
\mathcal T^{\bot\!\!\!\bot}\to\mathcal W$
be the bounded projection onto $\mathcal W$ along $\mathcal T$. The identification
\[
\mathcal T^{\bot\!\!\!\bot}/\mathcal T
\longrightarrow
\mathcal W,
\qquad
[z_{\mathcal T}+w]\mapsto w,\]
preserves the power pairing.

Choose an $r$-dimensional closed complement $\mathcal E^0$ of
$\mathcal T$ in $\widehat{\Dc}$, so that
$\widehat{\Dc}
=
\mathcal T\oplus\mathcal E^0$.
For $x_{\rm f}\in\Xc_{\rm f}$, define
\[
\mathcal E_{x_{\rm f}}
:=
P_{\mathcal W}
U_{x_{\rm f}}\mathcal E^0
\subset\mathcal W.
\]
Since $U_{x_{\rm f}}$ preserves the power pairing and fixes
$\mathcal T$, one has
\[
\mathcal T
\subset
U_{x_{\rm f}}\widehat{\Dc}
\subset
\mathcal T^{\bot\!\!\!\bot}
\;\text{ and }\;
U_{x_{\rm f}}\widehat{\Dc}
=
\mathcal T\oplus\mathcal E_{x_{\rm f}}.
\]

Moreover,
$U_{x_{\rm f}}\widehat{\Dc}/\mathcal T$
is self-orthogonal in
$\mathcal T^{\bot\!\!\!\bot}/\mathcal T$. Indeed, a class $[z]$ is
power-orthogonal to
$U_{x_{\rm f}}\widehat{\Dc}/\mathcal T$
if, and only if,
\[
z\in
\bigl(
U_{x_{\rm f}}\widehat{\Dc}
\bigr)^{\bot\!\!\!\bot}
=
U_{x_{\rm f}}\widehat{\Dc}.
\]
Under the power-preserving identification with $\mathcal W$, this gives
\[
\mathcal E_{x_{\rm f}}
=
\setdef{
w\in\mathcal W
}{
\langle\!\langle w,v\rangle\!\rangle=0
\text{ for all }v\in\mathcal E_{x_{\rm f}}
}.
\]
If $S^0:\R^r\to\mathcal E^0$ is an isomorphism, define
\[
S_{x_{\rm f}}
:=
P_{\mathcal W}U_{x_{\rm f}}S^0
:
\R^r\to\mathcal E_{x_{\rm f}}.
\]
This map is surjective by the definition of
$\mathcal E_{x_{\rm f}}$. If $S_{x_{\rm f}}v=0$, then
$U_{x_{\rm f}}S^0v\in\mathcal T$. Since $U_{x_{\rm f}}$ fixes
$\mathcal T$, it follows that
$S^0v\in\mathcal T\cap\mathcal E^0=\{0\}$, and hence $v=0$.
Thus $S_{x_{\rm f}}$ is an isomorphism, and the family depends
continuously on $x_{\rm f}$ in the operator norm. Consequently,
$\Dc_{\rm f}$ has a finite-dimensional modulation relative to
$\Zc_{\rm c}$ in the sense of
Definition~\ref{def:finite-dimensional-modulation}.\end{remark}

Additional finite-dimensional ports may be appended before applying the
modulation. In particular, for a finite-dimensional Hilbert space
$\Zc_{\rm free}$,
\[
\Dc_{\rm free}
:=
\setdef{
(f_{\rm free},e_{\rm free})
\in\Zc_{\rm free}\times\Zc_{\rm free}
}{
e_{\rm free}=0
}
\]
is a Dirac structure. Its flow is unrestricted and its effort vanishes.
Thus, a free finite-dimensional flow port can be added to
$\widehat{\Dc}$ and subsequently included in the modulation.

Selected finite-dimensional ports of $\Dc_{{\rm f},x_{\rm f}}$ may then be
interconnected with corresponding ports of another modulated Dirac
structure. If the coupling operator from
Proposition~\ref{prop:Dirac_comp} is surjective, the interconnected family is
again a modulated Dirac structure.

\begin{example}[Components of flexible multibody systems]\
\label{ex:expde}
\begin{enumerate}[label=(\alph*), ref=(\alph*)]
\item
\label{ex:expde1}
Consider a vibrating string; see \cite[Ex.~7.1.1]{JZ12}. Its transverse
displacement $w(\xi,t)$ satisfies
\begin{equation}
\tfrac{\partial^2 w}{\partial t^2}(\xi,t)
=
\tfrac{1}{\rho(\xi)}
\tfrac{\partial}{\partial\xi}
\left(
T(\xi)\tfrac{\partial w}{\partial\xi}(\xi,t)
\right),
\qquad
\xi\in[a,b],\quad t\geq0,
\label{eq:vibstr}
\end{equation}
where $\rho$ is the mass density and $T$ is the string tension. Introduce
the physical variables
\begin{align*}
p(\xi,t)
&=
\rho(\xi)\tfrac{\partial w}{\partial t}(\xi,t)
&&\text{transverse momentum density},
\\
\epsilon(\xi,t)
&=
\tfrac{\partial w}{\partial\xi}(\xi,t)
&&\text{strain}.
\end{align*}
With
$x
=
\begin{smallpmatrix}
p\\
\epsilon
\end{smallpmatrix}$,
 \eqref{eq:vibstr} is of the form~\eqref{eq:BH1gen} with
\[
N=1,
\;
D=0,
\;
B_0=0,
\;
P_0=0,
\quad
P_1=
\begin{smallbmatrix}
0&1\\
1&0
\end{smallbmatrix},
\;
H(\xi)=
\begin{smallbmatrix}
\rho(\xi)^{-1}&0\\
0&T(\xi)
\end{smallbmatrix}.
\]
A physical choice of boundary variables is
\[
f_\partial(t)
=
\begin{pmatrix}
(\rho^{-1}p)(a,t)\\
(\rho^{-1}p)(b,t)
\end{pmatrix},
\qquad
e_\partial(t)
=
\begin{pmatrix}
\phantom{-}(T\epsilon)(a,t)\\
-(T\epsilon)(b,t)
\end{pmatrix}.
\]
The boundary flows are the endpoint transverse velocities, whereas the
boundary efforts are the corresponding transverse reaction forces; see
\cite[eq.~(7.3)]{JZ12}.
\item
\label{ex:expde2}
Consider a Timoshenko beam; see \cite[Ex.~7.1.4]{JZ12}. Its transverse
displacement $w(\xi,t)$ and cross-section rotation $\phi(\xi,t)$ satisfy
\begin{align}
\rho(\xi)\tfrac{\partial^2 w}{\partial t^2}(\xi,t)
&=
\tfrac{\partial}{\partial\xi}
\left(
K(\xi)
\left(
\tfrac{\partial w}{\partial\xi}(\xi,t)-\phi(\xi,t)
\right)
\right),
\label{eq:beam_eq1}
\\
I_\rho(\xi)\tfrac{\partial^2\phi}{\partial t^2}(\xi,t)
&=
\tfrac{\partial}{\partial\xi}
\left(
EI(\xi)\tfrac{\partial\phi}{\partial\xi}(\xi,t)
\right)
+
K(\xi)
\left(
\tfrac{\partial w}{\partial\xi}(\xi,t)-\phi(\xi,t)
\right).
\label{eq:beam_eq2}
\end{align}
Here $\rho$ is the mass density, $I_\rho$ is the rotary inertia density,
$EI$ is the bending stiffness, and $K$ is the shear stiffness. Introduce
the physical variables
\begin{align*}
\gamma(\xi,t)
&=
\tfrac{\partial w}{\partial\xi}(\xi,t)-\phi(\xi,t)
&&\text{shear deformation},
\\
p(\xi,t)
&=
\rho(\xi)\tfrac{\partial w}{\partial t}(\xi,t)
&&\text{transverse momentum density},
\\
\theta(\xi,t)
&=
\tfrac{\partial\phi}{\partial\xi}(\xi,t)
&&\text{bending strain},
\\
M(\xi,t)
&=
I_\rho(\xi)\tfrac{\partial\phi}{\partial t}(\xi,t)
&&\text{angular momentum density}.
\end{align*}
With
$x
=
\begin{smallpmatrix}
\gamma\\
p\\
\theta\\
M
\end{smallpmatrix}$,
equations~\eqref{eq:beam_eq1}--\eqref{eq:beam_eq2} are of the
form~\eqref{eq:BH1gen} with
\[
N=1,
\;
D=0,
\;
B_0=0,
\quad
P_0=
\begin{smallbmatrix}
0&0&0&-1\\
0&0&0&0\\
0&0&0&0\\
1&0&0&0
\end{smallbmatrix},
\quad
P_1=
\begin{smallbmatrix}
0&1&0&0\\
1&0&0&0\\
0&0&0&1\\
0&0&1&0
\end{smallbmatrix},
\]
and
\[
H(\xi)
=
\begin{smallbmatrix}
K(\xi)&0&0&0\\
0&\rho(\xi)^{-1}&0&0\\
0&0&EI(\xi)&0\\
0&0&0&I_\rho(\xi)^{-1}
\end{smallbmatrix}.
\]
A physical choice of boundary variables is
\[
f_\partial(t)
=
\begin{pmatrix}
(\rho^{-1}p)(a,t)\\
(\rho^{-1}p)(b,t)\\
(I_\rho^{-1}M)(a,t)\\
(I_\rho^{-1}M)(b,t)
\end{pmatrix},
\qquad
e_\partial(t)
=
\begin{pmatrix}
\phantom{-}(K\gamma)(a,t)\\
-(K\gamma)(b,t)\\
\phantom{-}(EI\theta)(a,t)\\
-(EI\theta)(b,t)
\end{pmatrix}.
\]
The first two boundary flows are the endpoint transverse velocities and the
last two are the endpoint angular velocities. The corresponding boundary
efforts are the transverse reaction forces and reaction moments; see
\cite[eq.~(7.15)]{JZ12}.

\item
\label{ex:expde3}
Neglecting shear deformation and rotary inertia leads to the
Euler--Bernoulli beam; see \cite[Ex.~5.4]{Vill07}. Its transverse
displacement $w(\xi,t)$ satisfies
\begin{equation}
\rho(\xi)\tfrac{\partial^2 w}{\partial t^2}(\xi,t)
+
\tfrac{\partial^2}{\partial\xi^2}
\left(
EI(\xi)\tfrac{\partial^2 w}{\partial\xi^2}(\xi,t)
\right)
=
0,
\qquad
\xi\in[a,b],\quad t\geq0.
\label{eq:euler-bernoulli-reference}
\end{equation}
Here $\rho$ is the mass density and $EI$ is the bending stiffness.
Introduce the physical variables
\begin{align*}
\kappa(\xi,t)
&=
\tfrac{\partial^2 w}{\partial\xi^2}(\xi,t)
&&\text{curvature},
\\
p(\xi,t)
&=
\rho(\xi)\tfrac{\partial w}{\partial t}(\xi,t)
&&\text{transverse momentum density}.
\end{align*}
With
$x
=
\begin{smallpmatrix}
\kappa\\
p
\end{smallpmatrix}$,
equation~\eqref{eq:euler-bernoulli-reference} is of the
form~\eqref{eq:BH1gen} with
\[
N=2,
\;
D=0,
\;
B_0=0,
\;
P_0=P_1=0,
\quad
P_2=
\begin{smallbmatrix}
0&1\\
-1&0
\end{smallbmatrix},
\;
H(\xi)
=
\begin{smallbmatrix}
EI(\xi)&0\\
0&\rho(\xi)^{-1}
\end{smallbmatrix}.
\]
A physical choice of boundary variables is
\[
f_\partial(t)
=
\begin{pmatrix}
(\rho^{-1}p)(a,t)\\
(\rho^{-1}p)(b,t)\\
\tfrac{\partial}{\partial\xi}(\rho^{-1}p)(a,t)\\
\tfrac{\partial}{\partial\xi}(\rho^{-1}p)(b,t)
\end{pmatrix},
\qquad
e_\partial(t)
=
\begin{pmatrix}
-\tfrac{\partial}{\partial\xi}(EI\kappa)(a,t)\\
\phantom{-}\tfrac{\partial}{\partial\xi}(EI\kappa)(b,t)\\
\phantom{-}(EI\kappa)(a,t)\\
-(EI\kappa)(b,t)
\end{pmatrix}.
\]
The first two boundary flows are the endpoint transverse velocities and the
last two are the endpoint angular velocities. The corresponding boundary
efforts are the transverse reaction forces and reaction moments; see
\cite[Ex.~5.7.2]{AugnerDis}.
\end{enumerate}
\end{example}

\section{A moving flexible beam and a flexible slider--crank mechanism}
\label{sec:ex}
As an example of a flexible multibody system this section treats a moving flexible beam and its integration in a slider-crank mechanism. This is a typical example of a flexible multibody system with a kinematic loop. 
Throughout this section, we neglect damping in order to focus on the
power-preserving coupling between the rigid and flexible components.
Accordingly, the resistive spaces are chosen to be zero-dimensional and
the corresponding resistive relations are trivial. Damping can be included through nontrivial resistive relations as described
in Sections~\ref{sec:rigidmks} and~\ref{sec:flexiblemks}, without changing
the interconnection construction.

\subsection{A moving-frame Euler--Bernoulli beam}
\label{sec:moving-beam}

We consider an Euler--Bernoulli beam moving in the two-dimensional plane.
The deformation is described in the spirit of the classical floating-frame of reference approach, see e.g.~\cite{Shabana20}. In this study its deformation is described in a frame attached to the straight segment
joining its two endpoints, see Fig.~\ref{fig:FlexibleBeam}, representing a Chord-frame. Their positions are denoted by
$\bm q_0(t),\bm q_f(t)\in\R^2$. Translational forces
$\bm F_0(t),\bm F_f(t)\in\R^2$ and bending moments
$M_0(t),M_f(t)\in\R$ act at the endpoints. Corresponding coordinate frames attached to the cross sections of the end-points move with 
translational velocities $\bm v_0(t),\bm v_f(t)$ and angular velocities
$\omega_0(t),\omega_f(t)$, respectively. Gravity is omitted in order to focus on the coupling
between rigid motion and elastic deformation.

For time-dependent quantities, the dependence on $t$ is displayed when the
quantity is introduced or when it is needed for clarity; it is suppressed
when all quantities in a relation are evaluated at the same time.

The beam occupies the reference interval $[0,L]$. The bending stiffness is $EI$ and the mass per unit length is denoted by $\rho$. We assume that
$\rho,\rho^{-1},EI,(EI)^{-1}\in L^\infty([0,L];\R)$ and that $\rho$ and $EI$
are uniformly positive. Its total mass is
\[
m:=\int_0^L\rho(\xi)\,{\rm d}\xi.
\]
\begin{figure}[t]
    \centering
    \includegraphics[width=0.6\linewidth]{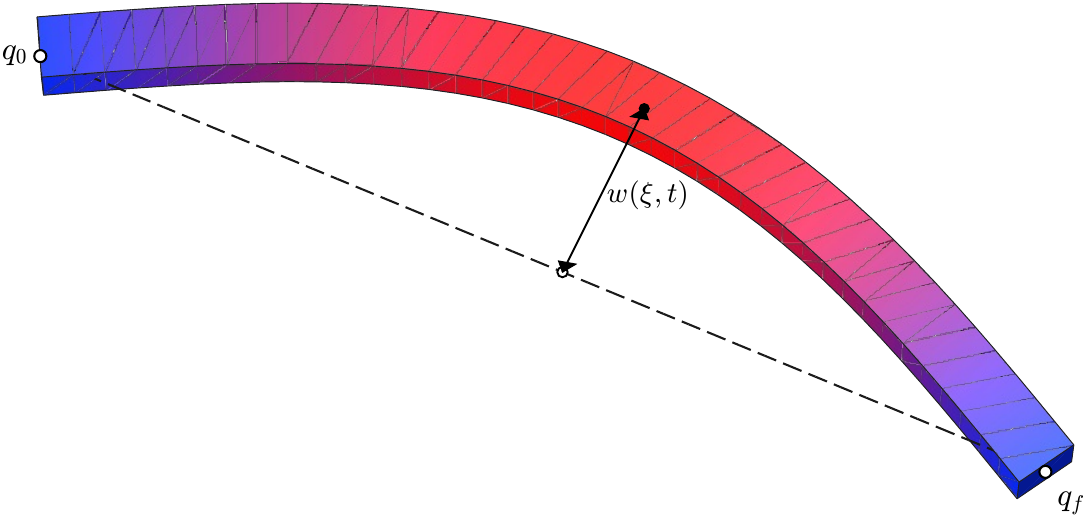}
    \caption{Moving planar Euler--Bernoulli beam and its endpoint ports.}
    \label{fig:FlexibleBeam}
\end{figure}
Related purely rotating-beam models can be found, for instance, in
\cite[Sec.~II.1.2.b]{Matt21} and~\cite{MattioniWuLeGorrec20}. Here,
translational and rotational ports are retained at both endpoints, so that
the beam can subsequently be interconnected with other multibody
components.

For
\[
U_{\rm pos}
:=
\setdef{
(\bm q_0,\bm q_f)\in\R^2\times\R^2
}{
\bm q_0\neq\bm q_f
},
\qquad
\bq_{\rm r}
:=
\begin{pmatrix}
\bm q_0\\
\bm q_f
\end{pmatrix},
\]
define
\[
\ell_{\rm r}(\bq_{\rm r})
:=
\|\bm q_f-\bm q_0\|_2,
\qquad
\mathtt t(\bq_{\rm r})
:=
\frac{\bm q_f-\bm q_0}{\ell_{\rm r}(\bq_{\rm r})},
\qquad
\mathtt n(\bq_{\rm r})
:=
J\mathtt t(\bq_{\rm r}),
\]
where
\[
J
:=
\begin{bmatrix}
0&-1\\
1&0
\end{bmatrix}.
\]
We omit the dependence on $\bq_{\rm r}$ whenever no ambiguity arises.

We choose the generalized velocities so that the endpoint velocities have
the decompositions
\[
\bm v_0
=
v_{\mathtt t}\mathtt t+v_{{\mathtt n}0}\mathtt n,
\qquad
\bm v_f
=
v_{\mathtt t}\mathtt t+v_{{\mathtt n}f}\mathtt n.
\]
Thus $v_{\mathtt t}$ is the common tangential endpoint velocity, while
$v_{{\mathtt n}0}$ and $v_{{\mathtt n}f}$ are the normal velocities of the
left and right endpoints, respectively.  These three velocities describe the velocity state of the reference frame. Set
\[
\bv_{\rm r}
:=
\begin{pmatrix}
v_{\mathtt t}\\
v_{{\mathtt n}0}\\
v_{{\mathtt n}f}
\end{pmatrix}.
\]
The endpoint kinematics are
\begin{equation}
\dot{\bq}_{\rm r}
=
Z(\bq_{\rm r})\bv_{\rm r},
\qquad
Z(\bq_{\rm r})
=
\begin{bmatrix}
\mathtt t&\mathtt n&0_{2\times1}\\
\mathtt t&0_{2\times1}&\mathtt n
\end{bmatrix},
\qquad
\omega
=
\frac{v_{{\mathtt n}f}-v_{{\mathtt n}0}}
{\ell_{\rm r}(\bq_{\rm r})}.
\label{eq:flexbeam-kinematics}
\end{equation}
Since
\[
\tfrac{{\rm d}}{{\rm d}t}\ell_{\rm r}(\bq_{\rm r})
=
\mathtt t^\top
\bigl(
\dot{\bm q}_f-\dot{\bm q}_0
\bigr)
=
0,
\]
the endpoint distance is constant. We restrict attention to initial
configurations satisfying
$\ell_{\rm r}(\bq_{\rm r}(0))=L$. Then
$\ell_{\rm r}(\bq_{\rm r}(t))=L$ along every corresponding trajectory and
\[
\omega
=
\frac{v_{{\mathtt n}f}-v_{{\mathtt n}0}}{L}.
\]
Let $w(\xi,t)$ denote the transverse displacement relative to the reference
segment. Along a trajectory, we write
$\mathtt t(t):=\mathtt t(\bq_{\rm r}(t))$ and
$\mathtt n(t):=\mathtt n(\bq_{\rm r}(t))$.
The physical centerline is
\begin{equation}
\bm r(\xi,t)
=
\bm q_0(t)+\xi\mathtt t(t)+w(\xi,t)\mathtt n(t),
\qquad
w(0,t)=w(L,t)=0.
\label{eq:flexbeam-centerline}
\end{equation}
As in the standard Euler--Bernoulli approximation, transverse deflections
and slopes are assumed to be small, while axial and shear deformation are
neglected. Using
$\dot{\mathtt t}=\omega\mathtt n$ and
$\dot{\mathtt n}=-\omega\mathtt t$, one obtains
\[
\tfrac{\partial\bm r}{\partial t}(\xi,t)
=
\bigl(
v_{\mathtt t}(t)-\omega(t)w(\xi,t)
\bigr)\mathtt t(t)
+
\left(
v_{{\mathtt n}0}(t)+\xi\omega(t)
+
\tfrac{\partial w}{\partial t}(\xi,t)
\right)\mathtt n(t).
\]
Consistently with the small-deflection approximation, we neglect the
deformation-dependent tangential velocity $-\omega w\mathtt t$, whose
magnitude is smaller than the characteristic rigid rotational velocity
$|\omega|L$ by the factor $|w|/L$. This approximation excludes centrifugal
stiffening and higher-order deformation-dependent inertial effects, while
retaining the leading moving-frame terms generated by the rotation of
$\mathtt t$ and $\mathtt n$. On the invariant set
$\ell_{\rm r}=L$, this gives
\[
\tfrac{\partial\bm r}{\partial t}(\xi,t)
\approx
v_{\mathtt t}(t)\mathtt t(t)
+
v_{\rm b}(\xi,t)\mathtt n(t),
\]
where
\[
v_{\rm b}(\xi,t)
=
\left(1-\tfrac{\xi}{L}\right)v_{{\mathtt n}0}(t)
+
\tfrac{\xi}{L}v_{{\mathtt n}f}(t)
+
\tfrac{\partial w}{\partial t}(\xi,t).
\]
Introduce the tangential and distributed transverse momenta
\[
p_{\mathtt t}
:=
mv_{\mathtt t},
\qquad
p_{\rm b}
:=
\rho v_{\rm b}.
\]
Under the linearized moving-frame kinematics, the kinetic energy is
\[
\mathcal T_{\rm b}
=
\tfrac12 m v_{\mathtt t}^2
+
\tfrac12
\int_0^L
\rho(\xi)
\left(
\left(1-\tfrac{\xi}{L}\right)v_{{\mathtt n}0}
+
\tfrac{\xi}{L}v_{{\mathtt n}f}
+
\tfrac{\partial w}{\partial t}(\xi)
\right)^2
\,{\rm d}\xi.
\]
In terms of the momenta introduced above, this becomes
\[
\mathcal T_{\rm b}
=
\frac{p_{\mathtt t}^2}{2m}
+
\frac12
\int_0^L
\rho(\xi)^{-1}p_{\rm b}(\xi)^2
\,{\rm d}\xi.
\]
The momenta conjugate to the normal endpoint velocities are the
partial derivatives of $\mathcal T_{\rm b}$ with respect to
$v_{{\mathtt n}0}$ and $v_{{\mathtt n}f}$, derived as
\[
\int_0^L
\left(1-\tfrac{\xi}{L}\right)p_{\rm b}(\xi)\,{\rm d}\xi
\qquad\text{and}\qquad
\int_0^L
\tfrac{\xi}{L}p_{\rm b}(\xi)\,{\rm d}\xi,
\]
respectively. Hence they are determined by the distributed momentum
$p_{\rm b}$ and do not constitute independent finite-dimensional storage
variables.

Introduce the curvature
\[
\kappa
:=
\frac{\partial^2w}{\partial\xi^2}.
\]
The total beam Hamiltonian, defined as the sum of the kinetic and bending
energies, is therefore
\begin{equation}
\mathcal H_{\rm b}
=
\tfrac{p_{\mathtt t}^2}{2m}
+
\tfrac12
\int_0^L
\left(
\rho(\xi)^{-1}p_{\rm b}(\xi)^2
+
EI(\xi)\kappa(\xi)^2
\right)
\,{\rm d}\xi.
\label{eq:flexbeam-H}
\end{equation}
Thus the choice of $p_{\rm b}$ absorbs the kinetic cross terms between the
normal endpoint motion and the relative deformation into the distributed
momentum. The corresponding moving-frame coupling terms reappear below as
finite-dimensional modulations of the Dirac structures of
the rigid and flexible subsystems.

We briefly derive the balance equations. Differentiating the linearized
velocity
$v_{\mathtt t}\mathtt t+v_{\rm b}\mathtt n$
and using
$\dot{\mathtt t}=\omega\mathtt n$ and
$\dot{\mathtt n}=-\omega\mathtt t$
gives the linearized acceleration
\[
\tfrac{\partial^2\bm r}{\partial t^2}
\approx
\bigl(
\dot v_{\mathtt t}-\omega v_{\rm b}
\bigr)\mathtt t
+
\left(
\tfrac{\partial v_{\rm b}}{\partial t}
+\omega v_{\mathtt t}
\right)\mathtt n.
\]
Integrating the tangential component of the linear-momentum balance over
$[0,L]$ yields
\[
m\dot v_{\mathtt t}
-
\omega\int_0^L\rho(\xi)v_{\rm b}(\xi)\,{\rm d}\xi
=
\mathtt t^\top\bm F_0+\mathtt t^\top\bm F_f.
\]
The normal component of the Euler--Bernoulli balance is
\[
\rho
\left(
\tfrac{\partial v_{\rm b}}{\partial t}
+\omega v_{\mathtt t}
\right)
=
-
\tfrac{\partial^2}{\partial\xi^2}(EI\kappa).
\]
Moreover, the affine interpolation of the endpoint velocities in
$v_{\rm b}$ has vanishing second spatial derivative. Hence
\[
\tfrac{\partial\kappa}{\partial t}
=
\tfrac{\partial^2}{\partial\xi^2}
\left(
\tfrac{\partial w}{\partial t}
\right)
=
\tfrac{\partial^2v_{\rm b}}{\partial\xi^2}.
\]
Using
$p_{\mathtt t}=mv_{\mathtt t}$,
$p_{\rm b}=\rho v_{\rm b}$, and the endpoint kinematics in
\eqref{eq:flexbeam-kinematics}, these identities give
\begin{equation}
\begin{aligned}
\dot{\bq}_{\rm r}
&=
Z(\bq_{\rm r})
\begin{pmatrix}
p_{\mathtt t}/m\\
(\rho^{-1}p_{\rm b})(0)\\
(\rho^{-1}p_{\rm b})(L)
\end{pmatrix},
\\
\dot p_{\mathtt t}
&=
\omega
\int_0^L p_{\rm b}(\xi)\,{\rm d}\xi
+
\mathtt t^\top\bm F_0
+
\mathtt t^\top\bm F_f,
\\
\tfrac{\partial\kappa}{\partial t}
&=
\tfrac{\partial^2}{\partial\xi^2}
\bigl(
\rho^{-1}p_{\rm b}
\bigr),
\\
\tfrac{\partial p_{\rm b}}{\partial t}
&=
-
\tfrac{\partial^2}{\partial\xi^2}
\bigl(
EI\kappa
\bigr)
-
\rho\omega\tfrac{p_{\mathtt t}}m,
\\
\omega
&=
\tfrac{
(\rho^{-1}p_{\rm b})(L)
-
(\rho^{-1}p_{\rm b})(0)
}{L}.
\end{aligned}
\label{eq:flexbeam-dynamics}
\end{equation}
For classical trajectories, for every fixed time $t$,
\[
EI\kappa(t),\rho^{-1}p_{\rm b}(t)\in H^2([0,L];\R).
\]
The force and moment boundary conditions are
\begin{equation}
\begin{aligned}
\tfrac{\partial}{\partial\xi}(EI\kappa)(0)
&=
\mathtt n^\top\bm F_0,
&
-
\tfrac{\partial}{\partial\xi}(EI\kappa)(L)
&=
\mathtt n^\top\bm F_f,
\\
-(EI\kappa)(0)
&=
M_0,
&
(EI\kappa)(L)
&=
M_f,
\end{aligned}
\label{eq:flexbeam-boundary-conditions}
\end{equation}
whereas the translational and angular endpoint outputs are
\begin{equation}
\begin{aligned}
\bm v_0
&=
\tfrac{p_{\mathtt t}}m\mathtt t
+
(\rho^{-1}p_{\rm b})(0)\mathtt n,
&
\bm v_f
&=
\tfrac{p_{\mathtt t}}m\mathtt t
+
(\rho^{-1}p_{\rm b})(L)\mathtt n,
\\
\omega_0
&=
\tfrac{\partial}{\partial\xi}
\bigl(
\rho^{-1}p_{\rm b}
\bigr)(0),
&
\omega_f
&=
\tfrac{\partial}{\partial\xi}
\bigl(
\rho^{-1}p_{\rm b}
\bigr)(L).
\end{aligned}
\label{eq:flexbeam-endpoint-outputs}
\end{equation}

We next derive a port-Hamiltonian decomposition of this model. The rigid and
flexible storage states are
\[
x_{\rm r}
=
\begin{pmatrix}
\bq_{\rm r}\\
p_{\mathtt t}
\end{pmatrix}
\in
\Xc_{\rm r}
:=
U_{\rm pos}\times\R,
\qquad
x_{\rm f}
=
\begin{pmatrix}
\kappa\\
p_{\rm b}
\end{pmatrix}
\in
\Xc_{\rm f}
:=
L^2([0,L];\R^2).
\]
The modulated Dirac structures of the rigid and flexible
subsystems will be defined on $\Xc_{\rm r}$ and $\Xc_{\rm f}$,
respectively. The mechanical beam model is their restriction to the invariant
subset $\ell_{\rm r}(\bq_{\rm r})=L$.

The energy-storage relations are
\begin{align*}
\Lc_{\rm r}
&:=
\setdef{
\left(
\begin{pmatrix}
\bq_{\rm r}\\
p_{\mathtt t}
\end{pmatrix},
\begin{pmatrix}
e_q^{\rm r}\\
e_p^{\rm r}
\end{pmatrix}
\right)
\in(\R^5)^2
}{
\bq_{\rm r}\in U_{\rm pos},
\quad
e_q^{\rm r}=0,
\quad
e_p^{\rm r}=p_{\mathtt t}/m
},
\\
\Lc_{\rm f}
&:=
\setdef{
\left(
\begin{pmatrix}
\kappa\\
p_{\rm b}
\end{pmatrix},
\begin{pmatrix}
e_\kappa\\
e_p
\end{pmatrix}
\right)
\in
\bigl(
L^2([0,L];\R^2)
\bigr)^2
}{
e_\kappa=EI\kappa,
\quad
e_p=\rho^{-1}p_{\rm b}
}.
\end{align*}
The rigid relation is the graph of the gradient of
$(\bq_{\rm r},p_{\mathtt t})\mapsto p_{\mathtt t}^2/(2m)$ on
$U_{\rm pos}\times\R$ and is therefore a Lagrangian submanifold by Proposition~\ref{prop:infdimlagr}.
The flexible relation is induced by the bounded self-adjoint multiplication
operator with diagonal entries $EI$ and $\rho^{-1}$ and is Lagrangian by
Proposition~\ref{prop:infdimlagr} as well.

The common coupling space is
$\Zc_{\rm c}
:=
\R^4$,
with coupling flows and efforts ordered as
\[
f_{\rm c}
=
\begin{pmatrix}
f_{\rm d}\\
f_{\mathtt t}\\
f_{{\mathtt n}0}\\
f_{{\mathtt n}f}
\end{pmatrix},
\qquad
e_{\rm c}
=
\begin{pmatrix}
e_{\rm d}\\
e_{\mathtt t}\\
e_{{\mathtt n}0}\\
e_{{\mathtt n}f}
\end{pmatrix}.
\]
The first port represents the distributed inertial coupling, the second is
the tangential port, and the last two are the normal endpoint ports.

For the rigid subsystem, set
$\Zc_{\rm r}
:=
\R^5\times\R^4$,
where the first factor contains the rigid storage variables and the second
the external translational port. Define
\[
B_{\rm ext}^{\rm r}(\bq_{\rm r})
:=
\begin{bmatrix}
\mathtt t^\top&\mathtt t^\top\\
\mathtt n^\top&0_{1\times2}\\
0_{1\times2}&\mathtt n^\top
\end{bmatrix}.
\]
The external translational effort is
\[
e_{\rm ext}^{\rm r}
=
-
\begin{pmatrix}
\bm F_0\\
\bm F_f
\end{pmatrix},
\]
in accordance with the generator convention.

For a prospective element of the Dirac structure of the
rigid subsystem, write
\[
f_{\rm c}^{\rm r}
=
\begin{pmatrix}
f_{\rm d}^{\rm r}\\
f_{\mathtt t}^{\rm r}\\
f_{{\mathtt n}0}^{\rm r}\\
f_{{\mathtt n}f}^{\rm r}
\end{pmatrix},
\qquad
e_{\rm c}^{\rm r}
=
\begin{pmatrix}
e_{\rm d}^{\rm r}\\
e_{\mathtt t}^{\rm r}\\
e_{{\mathtt n}0}^{\rm r}\\
e_{{\mathtt n}f}^{\rm r}
\end{pmatrix},
\]
and set
\[
v_{\rm r}
:=
\begin{pmatrix}
e_p^{\rm r}\\
f_{{\mathtt n}0}^{\rm r}\\
f_{{\mathtt n}f}^{\rm r}
\end{pmatrix},
\qquad
K_{\rm r}(p_{\mathtt t})
:=
\frac{p_{\mathtt t}}{mL}
\begin{bmatrix}
0&0&1&-1\\
0&0&0&0\\
-1&0&0&0\\
1&0&0&0
\end{bmatrix}.
\]
For
$x_{\rm r}=(\bq_{\rm r},p_{\mathtt t})\in\Xc_{\rm r}$, define
$\Dc_{{\rm r},x_{\rm r}}
\subset
(\Zc_{\rm r}\times\Zc_{\rm c})^2$
as the set of all flow and effort variables satisfying, for some
$\overline e_{\rm r}\in\R^3$ ordered according to the coupling ports
$(\mathtt t,{\mathtt n}0,{\mathtt n}f)$,
\begin{equation}
\begin{aligned}
f_q^{\rm r}
&=
Z(\bq_{\rm r})v_{\rm r},
\quad
f_{\rm ext}^{\rm r}
=
\bigl(B_{\rm ext}^{\rm r}(\bq_{\rm r})\bigr)^\top v_{\rm r},
\quad
f_{\mathtt t}^{\rm r}
=
e_p^{\rm r},
\\
0
&=
Z(\bq_{\rm r})^\top e_q^{\rm r}
+
\begin{pmatrix}
f_p^{\rm r}\\
0\\
0
\end{pmatrix}
+
B_{\rm ext}^{\rm r}(\bq_{\rm r})e_{\rm ext}^{\rm r}
+
\overline e_{\rm r},
\\
e_{\rm c}^{\rm r}
&=
\begin{pmatrix}
0\\
\overline e_{\rm r}
\end{pmatrix}
+
K_{\rm r}(p_{\mathtt t})f_{\rm c}^{\rm r}.
\end{aligned}
\label{eq:flexbeam-rigid-dirac}
\end{equation}
\begin{proposition}
\label{prop:flexbeam-rigid-dirac}
Let
$\Xc_{\rm r}=U_{\rm pos}\times\R$,
$\Zc_{\rm r}=\R^5\times\R^4$, and
$\Zc_{\rm c}=\R^4$. For each
$x_{\rm r}\in\Xc_{\rm r}$, let
$\Dc_{{\rm r},x_{\rm r}}
\subset
(\Zc_{\rm r}\times\Zc_{\rm c})^2$
be the relation defined by \eqref{eq:flexbeam-rigid-dirac}. Then
$\Dc_{\rm r}
:=
\bigl(
\Dc_{{\rm r},x_{\rm r}}
\bigr)_{x_{\rm r}\in\Xc_{\rm r}}$
is a modulated Dirac structure.
\end{proposition}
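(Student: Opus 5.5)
The plan is to write $\Dc_{{\rm r},x_{\rm r}}$ as the image of a state-independent or simpler Dirac structure under a power-preserving automorphism, and then invoke Proposition~\ref{prop:power-preserving-modulation}. Concretely, set $K_{\rm r}\equiv0$ in \eqref{eq:flexbeam-rigid-dirac} and call the resulting family $\Dc^0_{{\rm r},\bq_{\rm r}}$. Define
\[
U_{x_{\rm r}}\bigl((f_{\rm r},f_{\rm c}),(e_{\rm r},e_{\rm c})\bigr):=\bigl((f_{\rm r},f_{\rm c}),(e_{\rm r},e_{\rm c}+K_{\rm r}(p_{\mathtt t})f_{\rm c})\bigr).
\]
Since $K_{\rm r}(p_{\mathtt t})$ is skew-symmetric (its $(1,3),(3,1)$ and $(1,4),(4,1)$ entries are negatives of each other) and depends continuously on $p_{\mathtt t}$, the same computation as in Section~\ref{sec:flexible-port-modulation} shows that $U_{x_{\rm r}}$ is an operator-norm continuous family of power-pairing-preserving automorphisms. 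We have $\Dc_{{\rm r},x_{\rm r}}=U_{x_{\rm r}}\Dc^0_{{\rm r},\bq_{\rm r}}$, because the first two lines of \eqref{eq:flexbeam-rigid-dirac} do not involve $e_{\rm c}$, and the third line is exactly $e_{\rm c}-K_{\rm r}f_{\rm c}=(0,\overline e_{\rm r})$. It therefore suffices to show that $\Dc^0_{\rm r}$ is a modulated Dirac structure.

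For $\Dc^0_{\rm r}$, I would eliminate $\overline e_{\rm r}$ and recognize an instance of \eqref{eq:rigidDirac2}. With $K_{\rm r}=0$, the third line reads $e_{\rm d}^{\rm r}=0$ and $(e_{\mathtt t}^{\rm r},e_{{\mathtt n}0}^{\rm r},e_{{\mathtt n}f}^{\rm r})=\overline e_{\rm r}$, while $f_{\rm d}^{\rm r}$ is unconstrained. So $\Dc^0_{\rm r}$ is, up to reordering, the product of the free-flow Dirac structure $\Dc_{\rm free}$ on the $\rm d$-port with the relation in which $v_{\rm r}=(e_p^{\rm r},f^{\rm r}_{{\mathtt n}0},f^{\rm r}_{{\mathtt n}f})$ plays the role of $\bv_{\Lc,e}$. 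One caveat is that $v_{\rm r}$ mixes an effort with two coupling flows, so this is not literally \eqref{eq:rigidDirac2}. I would handle it by verifying the Dirac property directly, which is clean in finite dimensions (Remark~\ref{rem:dirfin}).

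Isotropy comes from substituting the balance $Z^\top e_q+(f_p,0,0)^\top+B_{\rm ext}e_{\rm ext}+\overline e_{\rm r}=0$ into the pairing. Writing $\overline f=(f_{\mathtt t},f_{{\mathtt n}0},f_{{\mathtt n}f})=v_{\rm r}$, the power $\langle f,e\rangle$ reduces to $v_{\rm r}^\top\bigl(Z^\top e_q+(f_p,0,0)^\top+B_{\rm ext}e_{\rm ext}+\overline e_{\rm r}\bigr)=0$; note that $f_p e_p$ appears as the first component. For maximality, I would count dimensions. The total space has dimension $n=5+4+4=13$. The free parameters are $v_{\rm r}\in\R^3$, $e_q\in\R^4$, $e_{\rm ext}\in\R^4$, $f_{\rm d}\in\R$, and $\overline e_{\rm r}\in\R^3$ subject to one scalar equation (the first component determines $f_p$; the other two components determine $\overline e_{{\mathtt n}0},\overline e_{{\mathtt n}f}$), which leaves $\overline e_{\mathtt t}$ free. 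This gives $3+4+4+1+1=13$. Injectivity of this parametrization is immediate, so $\dim\Dc^0=13$ and isotropy yields the Dirac property.

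Local trivialization then follows by taking $\Wc=\R^{13}$ and $T_y$ as the linear map sending these parameters to the element. It depends continuously on $\bq_{\rm r}$ through $Z$ and $B^{\rm r}_{\rm ext}$, which are continuous on $U_{\rm pos}$ since $\bq_0\neq\bq_f$, and it is injective. The main obstacle I expect is keeping the sign and ordering bookkeeping straight in the isotropy computation, so that the $v_{\rm r}$-mixing of effort and flow components produces exactly the cancellation. Otherwise the argument is routine, and Proposition~\ref{prop:power-preserving-modulation} finishes it.
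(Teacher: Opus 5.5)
Your proposal is correct and follows essentially the same route as the paper. Like the paper, you verify the Dirac property of the unmodulated relation ($K_{\rm r}=0$) through the power identity $v_{\rm r}^\top(\cdots)=0$ and the dimension count $3+4+4+1+1=13$, use the continuous parametrization in $\bq_{\rm r}$ (constant in $p_{\mathtt t}$) as a global trivialization, and then apply Proposition~\ref{prop:power-preserving-modulation} with the skew-symmetric $K_{\rm r}(p_{\mathtt t})$. The one step you leave implicit is polarization: $\langle f,e\rangle=0$ for every element of the subspace gives vanishing of the power pairing between any two elements, which the paper states explicitly.
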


\begin{proof}
First set $p_{\mathtt t}=0$, so that the coupling modulation vanishes, and
denote the resulting relation by
$\Dc_{{\rm r},\bq_{\rm r}}^0$. The total power of an element of this
relation equals
\[
v_{\rm r}^\top
\left(
Z(\bq_{\rm r})^\top e_q^{\rm r}
+
\begin{pmatrix}
f_p^{\rm r}\\
0\\
0
\end{pmatrix}
+
B_{\rm ext}^{\rm r}(\bq_{\rm r})e_{\rm ext}^{\rm r}
+
\overline e_{\rm r}
\right)
=
0.
\]
Hence the total power vanishes for every element of
$\Dc_{{\rm r},\bq_{\rm r}}^0$. Since
$\Dc_{{\rm r},\bq_{\rm r}}^0$ is a linear subspace, for any two elements
$d=(f,e)$ and $\widehat d=(\widehat f,\widehat e)$ of this relation, their
sum also belongs to $\Dc_{{\rm r},\bq_{\rm r}}^0$. Consequently,
\[
0
=
\langle f+\widehat f,e+\widehat e\rangle
=
\langle f,\widehat e\rangle
+
\langle\widehat f,e\rangle,
\]
where the vanishing self-power terms have been omitted. Thus
$\Dc_{{\rm r},\bq_{\rm r}}^0$ is power-isotropic.

Its elements are uniquely determined by the freely chosen variables
\[
v_{\rm r}\in\R^3,
\qquad
f_{\rm d}^{\rm r}\in\R,
\qquad
e_q^{\rm r}\in\R^4,
\qquad
e_{\rm ext}^{\rm r}\in\R^4,
\]
and the tangential component of $\overline e_{\rm r}$. Hence
\[
\dim\Dc_{{\rm r},\bq_{\rm r}}^0
=
3+1+4+4+1
=
13,
\]
which equals the dimension of the underlying flow space
$\Zc_{\rm r}\times\Zc_{\rm c}$. Since the ambient power pairing is
non-degenerate, power-isotropy and this dimension identity imply
\[
\Dc_{{\rm r},\bq_{\rm r}}^0
=
\bigl(
\Dc_{{\rm r},\bq_{\rm r}}^0
\bigr)^{\bot\!\!\!\bot}.
\]
Thus $\Dc_{{\rm r},\bq_{\rm r}}^0$ is a Dirac structure.

The preceding parametrization depends continuously on $\bq_{\rm r}$ and
provides a global trivialization of the family
$\bigl(
\Dc_{{\rm r},\bq_{\rm r}}^0
\bigr)_{\bq_{\rm r}\in U_{\rm pos}}$.
Extending this family constantly in $p_{\mathtt t}$ gives a modulated Dirac
structure on $\Xc_{\rm r}$.

Finally, $K_{\rm r}(p_{\mathtt t})$ is skew-symmetric and depends
continuously on $p_{\mathtt t}$. The transformation that leaves all flows
and all non-coupling efforts unchanged and adds
$K_{\rm r}(p_{\mathtt t})f_{\rm c}^{\rm r}$ to the coupling efforts
therefore preserves the power pairing. The assertion follows from
Proposition~\ref{prop:power-preserving-modulation}.
\end{proof}
For the flexible subsystem, set
$\Zc_{\rm f}
:=
L^2([0,L];\R^2)\times\R^2$,
where the second factor contains the two external moment ports. Write
\[
f_{\rm M}^{\rm f}
=
\begin{pmatrix}
f_{{\rm M}0}^{\rm f}\\
f_{{\rm M}f}^{\rm f}
\end{pmatrix},
\qquad
e_{\rm M}^{\rm f}
=
\begin{pmatrix}
e_{{\rm M}0}^{\rm f}\\
e_{{\rm M}f}^{\rm f}
\end{pmatrix}
=
-
\begin{pmatrix}
M_0\\
M_f
\end{pmatrix}.
\]
Let
$\Dc_{\rm f}^0
\subset
(\Zc_{\rm f}\times\Zc_{\rm c})^2$
consist of all flow and effort variables satisfying
\begin{equation}
\begin{aligned}
f_\kappa
&=
\tfrac{\partial^2e_p}{\partial\xi^2},
&
f_p
&=
-
\tfrac{\partial^2e_\kappa}{\partial\xi^2}
-
\rho e_{\rm d}^{\rm f},
&
f_{\rm d}^{\rm f}
&=
\int_0^L\rho(\xi)e_p(\xi)\,{\rm d}\xi,
&
e_{\mathtt t}^{\rm f}
&=
0,
\\
f_{{\mathtt n}0}^{\rm f}
&=
e_p(0),
&
e_{{\mathtt n}0}^{\rm f}
&=
-
\tfrac{\partial e_\kappa}{\partial\xi}(0),
&
f_{{\mathtt n}f}^{\rm f}
&=
e_p(L),
&
e_{{\mathtt n}f}^{\rm f}
&=
\tfrac{\partial e_\kappa}{\partial\xi}(L),
\\
f_{{\rm M}0}^{\rm f}
&=
\tfrac{\partial e_p}{\partial\xi}(0),
&
e_{{\rm M}0}^{\rm f}
&=
e_\kappa(0),
&
f_{{\rm M}f}^{\rm f}
&=
\tfrac{\partial e_p}{\partial\xi}(L),
&
e_{{\rm M}f}^{\rm f}
&=
-e_\kappa(L),
\end{aligned}
\label{eq:flexbeam-flexible-dirac}
\end{equation}
where $e_\kappa,e_p\in H^2([0,L];\R)$, whereas
$e_{\rm d}^{\rm f}\in\R$ and $f_{\mathtt t}^{\rm f}\in\R$
are free port variables.
\begin{proposition}
\label{prop:flexbeam-flexible-dirac}
Let
$\Zc_{\rm f}=L^2([0,L];\R^2)\times\R^2$ and
$\Zc_{\rm c}=\R^4$, and let
$\Dc_{\rm f}^0
\subset
(\Zc_{\rm f}\times\Zc_{\rm c})^2$
be the relation defined by \eqref{eq:flexbeam-flexible-dirac}. Then
$\Dc_{\rm f}^0$ is a Dirac structure.
\end{proposition}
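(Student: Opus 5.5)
The plan is to build $\Dc_{\rm f}^0$ from the results already established: Proposition~\ref{Prop:Diracflex0}, then the augmentation Lemma~\ref{lem:Diracext}, then a product with trivial finite-dimensional Dirac structures, and finally a power-pairing-preserving reordering and sign change of variables.

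\emph{Step 1 (boundary structure).} Take the Euler--Bernoulli data of Example~\ref{ex:expde}\ref{ex:expde3} on $[a,b]=[0,L]$: $N=2$, $d=2$, $P_0=P_1=0$, $P_2=\begin{smallbmatrix}0&1\\-1&0\end{smallbmatrix}$, with effort $e=(e_\kappa,e_p)\in H^2([0,L];\R^2)$. Then $f=P_2e''=(e_p'',-e_\kappa'')$, which matches the first two relations in \eqref{eq:flexbeam-flexible-dirac} when $e_{\rm d}^{\rm f}=0$. The boundary flows are $e_p(0),e_p(L),e_p'(0),e_p'(L)$. The boundary efforts are $-e_\kappa'(0),e_\kappa'(L),e_\kappa(0),-e_\kappa(L)$. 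These are exactly the variables $f_{\mathtt n0}^{\rm f},f_{\mathtt nf}^{\rm f},f_{\rm M0}^{\rm f},f_{\rm Mf}^{\rm f}$ and their co-located efforts. The associated $W$ is invertible, since these eight trace values are a signed permutation of $\gamma(e)$.

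The Green identity \eqref{eq:BNDtriplet} reduces to integrating by parts twice:
\[
\int_0^L\bigl(e_\kappa e_p''-e_pe_\kappa''\bigr)\,{\rm d}\xi
=\bigl[e_\kappa e_p'-e_pe_\kappa'\bigr]_0^L ,
\]
which must equal $-f_\partial^\top e_\partial$. I will check this sign bookkeeping explicitly. It is the main point where an error could enter, and if needed I will flip the sign convention of a port pair, which preserves the power pairing. With that verified, Proposition~\ref{Prop:Diracflex0} gives a Dirac structure $\Dc_\partial$.

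\emph{Step 2 (distributed inertial port).} Apply Lemma~\ref{lem:Diracext} to $\Dc_\partial$ with $\Zc_0=\R$ and
\[
C(e,e_\partial):=\int_0^L\rho(\xi)e_p(\xi)\,{\rm d}\xi .
\]
This $C$ is bounded because $\rho\in L^\infty$. Its adjoint is $C^*e_0=((0,\rho e_0),0)$. The augmented flow is therefore $f-C^*e_{\rm d}^{\rm f}$, which produces exactly the term $-\rho e_{\rm d}^{\rm f}$ in $f_p$ and the output $f_{\rm d}^{\rm f}=\int_0^L\rho e_p\,{\rm d}\xi$. The boundary flow and effort components are untouched.

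\emph{Step 3 (free tangential port).} Take the product with $\Dc_{\rm free}$ on $\Zc_{\rm free}=\R$, which has free flow $f_{\mathtt t}^{\rm f}$ and effort $e_{\mathtt t}^{\rm f}=0$; this is a Dirac structure as noted earlier. Products of Dirac structures are Dirac structures.

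\emph{Step 4 (reordering).} Apply the fixed permutation into the ordering $(\Zc_{\rm f}\times\Zc_{\rm c})^2$, together with any fixed sign changes $(f_i,e_i)\mapsto(-f_i,-e_i)$ needed from Step 1. Such maps preserve the power pairing, so they map Dirac structures to Dirac structures, as in the proof of Proposition~\ref{prop:power-preserving-modulation}. The resulting relation is exactly \eqref{eq:flexbeam-flexible-dirac}.

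The only genuinely delicate step is the sign verification of the Green identity in Step 1. Everything else is routine.
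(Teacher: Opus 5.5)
Your proposal follows the paper's proof. The paper applies Proposition~\ref{Prop:Diracflex} with the Euler--Bernoulli data of Example~\ref{ex:expde}\,\ref{ex:expde3} and $B_0=(0,\rho)^\top$, takes the product with the free tangential port, and permutes; you do the same but unpack Proposition~\ref{Prop:Diracflex} into Proposition~\ref{Prop:Diracflex0} plus Lemma~\ref{lem:Diracext}, which is exactly how that proposition is proved. The sign check you defer goes through unchanged, since $f_\partial^\top e_\partial=-e_p(0)e_\kappa'(0)+e_p(L)e_\kappa'(L)+e_p'(0)e_\kappa(0)-e_p'(L)e_\kappa(L)=-\bigl[e_\kappa e_p'-e_pe_\kappa'\bigr]_0^L$; this matters, because your fallback would not have worked: a flip $(f_i,e_i)\mapsto(-f_i,-e_i)$ leaves $f_ie_i$ unchanged and so cannot repair a failed Green identity.
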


\begin{proof}
The relations in \eqref{eq:flexbeam-flexible-dirac} for the distributed
storage flows $(f_\kappa,f_p)$, the distributed coupling port
$(f_{\rm d}^{\rm f},e_{\rm d}^{\rm f})$, and the normal and moment endpoint
ports are obtained from Proposition~\ref{Prop:Diracflex} and
Example~\ref{ex:expde}\,\ref{ex:expde3}, with vanishing damping and
\[
B_0(\xi)
=
\begin{pmatrix}
0\\
\rho(\xi)
\end{pmatrix}.
\]
The tangential port is the product with the finite-dimensional Dirac structure
\[
\Dc_{\mathtt t}^{\rm free}
:=
\setdef{
(f_{\mathtt t},e_{\mathtt t})\in\R\times\R
}{
e_{\mathtt t}=0
}.
\]
A fixed permutation places the storage variables, the two external moment
ports, and the four coupling ports in the order used above. Products and
power-pairing-preserving permutations preserve the Dirac property. Hence
$\Dc_{\rm f}^0$ is a Dirac structure.
\end{proof}

Define the bounded linear functional
\[
P_{\rm b}:\Xc_{\rm f}\to\R,\
x_{\rm f}
=
\begin{pmatrix}
\kappa\\
p_{\rm b}
\end{pmatrix}
\mapsto
\int_0^L p_{\rm b}(\xi)\,{\rm d}\xi.
\]
On the mechanical coupling ports ordered as
$(\mathtt t,{\mathtt n}0,{\mathtt n}f)$, set
\[
K_{\rm f}(x_{\rm f})
:=
\frac{P_{\rm b}(x_{\rm f})}{L}
\begin{bmatrix}
0&-1&1\\
1&0&0\\
-1&0&0
\end{bmatrix}.
\]
Relative to the full coupling-port ordering
$({\rm d},\mathtt t,{\mathtt n}0,{\mathtt n}f)$, define
\[
\widetilde K_{\rm f}(x_{\rm f})
:=
\begin{bmatrix}
0&0_{1\times3}\\
0_{3\times1}&K_{\rm f}(x_{\rm f})
\end{bmatrix}.
\]
Let $U_{x_{\rm f}}^{\rm f}$ leave all flows and all non-coupling efforts
unchanged and transform the coupling efforts according to
\[
e_{\rm c}^{\rm f}
\longmapsto
e_{\rm c}^{\rm f}
+
\widetilde K_{\rm f}(x_{\rm f})f_{\rm c}^{\rm f}.
\]
For $x_{\rm f}\in\Xc_{\rm f}$, define
\[
\Dc_{{\rm f},x_{\rm f}}
:=
U_{x_{\rm f}}^{\rm f}\Dc_{\rm f}^0,
\]
where $\Dc_{\rm f}^0$ is the Dirac structure from
Proposition~\ref{prop:flexbeam-flexible-dirac}.

\begin{proposition}
\label{prop:flexbeam-flexible-modulation}
The family
\[
\Dc_{\rm f}
:=
\bigl(
\Dc_{{\rm f},x_{\rm f}}
\bigr)_{x_{\rm f}\in\Xc_{\rm f}}
\]
is a modulated Dirac structure. Moreover, it has a finite-dimensional
modulation relative to $\Zc_{\rm c}$.
\end{proposition}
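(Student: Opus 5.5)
The plan is to reduce both assertions to results already established: Proposition~\ref{prop:power-preserving-modulation} for the modulated Dirac property, and Remark~\ref{rem:automorphism-finite-dimensional-modulation} for the finite-dimensional modulation.

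\textbf{Step 1: the transformation preserves the power pairing and depends continuously on $x_{\rm f}$.}
The matrix $K_{\rm f}(x_{\rm f})$ is skew-symmetric: its $(1,2)$ and $(2,1)$ entries are $-1$ and $1$, and its $(1,3)$ and $(3,1)$ entries are $1$ and $-1$. Hence $\widetilde K_{\rm f}(x_{\rm f})$ is skew-symmetric on $\Zc_{\rm c}=\R^4$. The map $x_{\rm f}\mapsto\widetilde K_{\rm f}(x_{\rm f})$ is $P_{\rm b}(x_{\rm f})/L$ times a fixed matrix. Since $P_{\rm b}$ is a bounded linear functional on $L^2([0,L];\R^2)$, this map is operator-norm continuous on $\Xc_{\rm f}$.

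\textbf{Step 2: $\Dc_{\rm f}$ is a modulated Dirac structure.}
Take $\Zc_0:=\Zc_{\rm f}$ and $\Zc_{\rm m}:=\Zc_{\rm c}$ in the construction of Section~\ref{sec:flexible-port-modulation}, with $K:=\widetilde K_{\rm f}$. Then $U^{\rm f}_{x_{\rm f}}$ is exactly the automorphism $U_{x_{\rm f}}$ defined there. By the computation given there, it is invertible and preserves the power pairing, and by Step~1 it depends continuously on $x_{\rm f}$ in the operator norm. Regard $\Dc_{\rm f}^0$, which is Dirac by Proposition~\ref{prop:flexbeam-flexible-dirac}, as a constant family. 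A constant family has the global trivialization given by the inclusion of $\Dc_{\rm f}^0$, so it is a modulated Dirac structure. Proposition~\ref{prop:power-preserving-modulation} then yields the first assertion.

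\textbf{Step 3: finite-dimensional modulation relative to $\Zc_{\rm c}$.}
Apply Remark~\ref{rem:automorphism-finite-dimensional-modulation} with $\widehat{\Dc}:=\Dc_{\rm f}^0$. Its hypotheses must be checked:
\begin{enumerate}[label=(\roman*)]
\item $\Zc_{\rm c}$ is finite-dimensional;
\item $U^{\rm f}_{x_{\rm f}}$ acts only on the coupling variables, since it changes only $e_{\rm c}^{\rm f}$ and does so via $f_{\rm c}^{\rm f}$.
\end{enumerate}
It follows that $\mathcal T:=\Dc_{\rm f}^0\cap\ker P_{\rm c}$ is fixed by $U^{\rm f}_{x_{\rm f}}$.

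The remark's conclusion relies on the operator-norm continuity of $S_{x_{\rm f}}=P_{\mathcal W}U^{\rm f}_{x_{\rm f}}S^0$. This follows because $P_{\mathcal W}$ and $S^0$ are fixed bounded maps and $U^{\rm f}_{x_{\rm f}}$ is continuous by Step~1.

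\textbf{Main obstacle.}
The step I expect to require care is verifying that $\mathcal T$ is closed, since the remark uses this for $(\mathcal T^{\bot\!\!\!\bot})^{\bot\!\!\!\bot}=\mathcal T$. Closedness holds because $\Dc_{\rm f}^0$ is closed, being a Dirac structure, and $P_{\rm c}$ is a bounded coordinate projection. It is also worth checking that the defect index $r$ is finite and well defined, which the remark obtains from the injection $\Dc_{\rm f}^0/\mathcal T\hookrightarrow\Zc_{\rm c}\times\Zc_{\rm c}$. Concretely $r\le 8$, and I expect $r=8$ because the coupling traces can be prescribed independently: the values $e_p(0)$, $e_p(L)$, $\partial_\xi e_\kappa(0)$, $\partial_\xi e_\kappa(L)$ and $\int_0^L\rho\, e_p\,{\rm d}\xi$ are independent functionals on $H^2$, and $e_{\rm d}^{\rm f}$ and $f_{\mathtt t}^{\rm f}$ are free. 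The exact value of $r$, however, is not needed for the statement.
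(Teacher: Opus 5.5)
Your proposal is correct and follows the paper's own proof. The paper's steps are the skew-symmetry of $\widetilde K_{\rm f}(x_{\rm f})$, operator-norm continuity from the boundedness of $P_{\rm b}$, Proposition~\ref{prop:power-preserving-modulation} applied to the constant family $\Dc_{\rm f}^0$, and Remark~\ref{rem:automorphism-finite-dimensional-modulation}, since $U^{\rm f}_{x_{\rm f}}$ acts only on the finite-dimensional coupling variables. One slip in your side comment does not affect the argument: $e_{\mathtt t}^{\rm f}=0$ on $\Dc_{\rm f}^0$, so the seven independent coupling components you list give defect index $r=7$, not $8$.
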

\begin{proof}
The matrix $\widetilde K_{\rm f}(x_{\rm f})$ is skew-symmetric for every
$x_{\rm f}\in\Xc_{\rm f}$. Since $P_{\rm b}$ is a bounded linear
functional, the mapping
$x_{\rm f}
\longmapsto
U_{x_{\rm f}}^{\rm f}$
is operator-norm continuous.
Proposition~\ref{prop:power-preserving-modulation} therefore implies that
$\Dc_{\rm f}$ is a modulated Dirac structure. Since
$U_{x_{\rm f}}^{\rm f}$ acts only on the finite-dimensional coupling
variables, the final assertion follows from
Remark~\ref{rem:automorphism-finite-dimensional-modulation}.
\end{proof}

The modulated flexible coupling efforts are explicitly
\begin{equation}
e_{\mathtt t}^{\rm f}
=
\tfrac{P_{\rm b}(x_{\rm f})}{L}
\left(
f_{{\mathtt n}f}^{\rm f}
-
f_{{\mathtt n}0}^{\rm f}
\right),
\;\;\;
e_{{\mathtt n}0}^{\rm f}
=
-
\tfrac{\partial e_\kappa}{\partial\xi}(0)
+
\tfrac{P_{\rm b}(x_{\rm f})}{L}f_{\mathtt t}^{\rm f},
\;\;\;
e_{{\mathtt n}f}^{\rm f}
=
\tfrac{\partial e_\kappa}{\partial\xi}(L)
-
\tfrac{P_{\rm b}(x_{\rm f})}{L}f_{\mathtt t}^{\rm f}.
\label{eq:flexbeam-flexible-modulated-efforts}
\end{equation}
The distributed coupling effort remains unrestricted and is unchanged by
the modulation.

The rigid and flexible coupling ports are interconnected by
$f_{\rm c}^{\rm r}
=
f_{\rm c}^{\rm f}$,
$e_{\rm c}^{\rm r}
=
-e_{\rm c}^{\rm f}$.
\begin{proposition}
\label{prop:flexbeam-interconnection}
Let $\Dc_{\rm r}$ be the modulated Dirac structure of the
rigid subsystem from
Proposition~\ref{prop:flexbeam-rigid-dirac}, and let $\Dc_{\rm f}$ be the
modulated Dirac structure of the flexible subsystem from
Proposition~\ref{prop:flexbeam-flexible-modulation}. For
$(x_{\rm r},x_{\rm f})\in\Xc_{\rm r}\times\Xc_{\rm f}$, define
\[
\Gamma_{x_{\rm r},x_{\rm f}}:
\Dc_{{\rm r},x_{\rm r}}
\times
\Dc_{{\rm f},x_{\rm f}}
\to
\Zc_{\rm c}\times\Zc_{\rm c},
\quad
(d_{\rm r},d_{\rm f})
\mapsto
\begin{pmatrix}
f_{\rm c}^{\rm r}-f_{\rm c}^{\rm f}\\
e_{\rm c}^{\rm r}+e_{\rm c}^{\rm f}
\end{pmatrix}.
\]
Then $\Gamma_{x_{\rm r},x_{\rm f}}$ is surjective for every
$(x_{\rm r},x_{\rm f})\in\Xc_{\rm r}\times\Xc_{\rm f}$.
Consequently,
\[
\Dc_{\rm b}
:=
\Pi
\bigl(
\Dc_{\rm r}
\circ_{\Zc_{\rm c}}
\Dc_{\rm f}
\bigr)
\]
is a modulated Dirac structure, where $\Pi$ is the fixed
power-pairing-preserving permutation that groups the rigid and flexible
storage variables and the remaining external ports. Together with
\[
\Lc_{\rm b}
:=
\Lc_{\rm r}\times\Lc_{\rm f}
\]
and the trivial resistive relation, this defines a port-Hamiltonian system.
On the invariant subset
$\ell_{\rm r}(\bq_{\rm r})=L$, this system is precisely the moving-beam
model \eqref{eq:flexbeam-dynamics}--\eqref{eq:flexbeam-endpoint-outputs}
with storage function \eqref{eq:flexbeam-H}.
\end{proposition}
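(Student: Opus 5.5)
The proof has three parts: surjectivity of $\Gamma_{x_{\rm r},x_{\rm f}}$, an appeal to Proposition~\ref{prop:Dirac_comp}, and a verification that the interconnected relations reproduce \eqref{eq:flexbeam-dynamics}--\eqref{eq:flexbeam-endpoint-outputs}.

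\emph{Surjectivity.} Given $(a,b)\in\R^4\times\R^4$, I construct $d_{\rm r}$ and $d_{\rm f}$ explicitly. The rigid structure \eqref{eq:flexbeam-rigid-dirac} has little freedom in its flows. The components $f_{{\mathtt n}0}^{\rm r}$ and $f_{{\mathtt n}f}^{\rm r}$ are free through $v_{\rm r}$. The component $f_{\rm d}^{\rm r}$ is free. The component $f_{\mathtt t}^{\rm r}=e_p^{\rm r}$ is free, since $e_p^{\rm r}$ enters $v_{\rm r}$ freely.

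Its efforts are also flexible. The component $\overline e_{\rm r}$ can be prescribed arbitrarily by solving the second line of \eqref{eq:flexbeam-rigid-dirac} for $f_p^{\rm r}$ and $e_{\rm ext}^{\rm r}$; this works because $B_{\rm ext}^{\rm r}$ has full row rank $3$. Hence $e_{\rm c}^{\rm r}=(0,\overline e_{\rm r})+K_{\rm r}f_{\rm c}^{\rm r}$ has free last three components. Only the component $e_{\rm d}^{\rm r}$ is fixed, and it equals the first row of $K_{\rm r}f_{\rm c}^{\rm r}$.

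On the flexible side, $e_{\rm d}^{\rm f}$ is a free port variable and the modulation leaves it unchanged. So I will take $d_{\rm f}=0$ and choose $d_{\rm r}$ as follows. First, pick the rigid flows with $f_{\rm c}^{\rm r}=a$. Second, choose $\overline e_{\rm r}$ so that the last three components of $e_{\rm c}^{\rm r}$ match those of $b$. This leaves the first component $e_{\rm d}$, which I fix by adding to $d_{\rm f}$ the element with $e_p=e_\kappa=0$, $f_{\mathtt t}^{\rm f}=0$ and $e_{\rm d}^{\rm f}$ arbitrary. Its other coupling components vanish: $f_{\rm d}^{\rm f}=\int\rho e_p=0$, and the modulation terms are zero because all $f_{\rm c}^{\rm f}=0$. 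Only its noncoupling flow $f_p=-\rho e_{\rm d}^{\rm f}$ is nonzero. Linearity then gives surjectivity. Proposition~\ref{prop:Dirac_comp} yields that $\Dc_{\rm r}\circ_{\Zc_{\rm c}}\Dc_{\rm f}$ is a modulated Dirac structure, and $\Pi$ preserves the power pairing. The product $\Lc_{\rm b}$ is Lagrangian, and the trivial resistive relation is resistive, so Definition~\ref{def-pH} is satisfied.

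\emph{Identification with the model.} This is the main obstacle, a careful bookkeeping exercise. I substitute $\dot x_{\rm r}$ and $\dot x_{\rm f}$ as flows, and $e_q^{\rm r}=0$, $e_p^{\rm r}=p_{\mathtt t}/m$, $e_\kappa=EI\kappa$, $e_p=\rho^{-1}p_{\rm b}$ as efforts, and impose $f_{\rm c}^{\rm r}=f_{\rm c}^{\rm f}$ and $e_{\rm c}^{\rm r}=-e_{\rm c}^{\rm f}$.

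\emph{Flow matching.} This gives $v_{\rm r}=(p_{\mathtt t}/m,(\rho^{-1}p_{\rm b})(0),(\rho^{-1}p_{\rm b})(L))$, hence the $\dot\bq_{\rm r}$ equation and the outputs $\bm v_0$ and $\bm v_f$ via $B_{\rm ext}^{\rm r}$. It also gives $f_{\rm d}=\int p_{\rm b}=P_{\rm b}(x_{\rm f})$ and $f_{\mathtt t}=p_{\mathtt t}/m$.

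\emph{Effort matching.} On the distributed port, $e_{\rm d}^{\rm r}=\tfrac{p_{\mathtt t}}{mL}(f_{{\mathtt n}0}-f_{{\mathtt n}f})=-\omega p_{\mathtt t}/m$. Setting $e_{\rm d}^{\rm f}=-e_{\rm d}^{\rm r}=\omega p_{\mathtt t}/m$ produces the term $-\rho\omega p_{\mathtt t}/m$ in $\partial_t p_{\rm b}$. On the tangential port, $\overline e_{{\rm r},\mathtt t}+\tfrac{p_{\mathtt t}}{mL}f_{\rm d}=-e_{\mathtt t}^{\rm f}=-\tfrac{P_{\rm b}}{L}(f_{{\mathtt n}f}-f_{{\mathtt n}0})$. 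Both modulation terms must be checked here: the first row of $K_{\rm r}$ and the last two rows of $K_{\rm r}$ contain $p_{\mathtt t}f_{\rm d}/(mL)$, while $K_{\rm f}$ contains $P_{\rm b}f_{\mathtt t}/L$. These should combine so that the first row of the rigid momentum equation reads $\dot p_{\mathtt t}=\omega\int p_{\rm b}+\mathtt t^\top\bm F_0+\mathtt t^\top\bm F_f$, with $\mathtt t^\top\bm F_0+\mathtt t^\top\bm F_f$ coming from $-B_{\rm ext}^{\rm r}e_{\rm ext}^{\rm r}$. On the normal ports, the two rows of $K_{\rm r}$ and of $K_{\rm f}$ cancel each other (this must be verified), leaving $\partial_\xi(EI\kappa)$ equal to the normal forces as in \eqref{eq:flexbeam-boundary-conditions}. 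The moment ports give the remaining boundary conditions and the outputs $\omega_0$ and $\omega_f$ directly.

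Finally, the storage function is $\mathcal H_{\rm b}=p_{\mathtt t}^2/(2m)+\tfrac12\int(\rho^{-1}p_{\rm b}^2+EI\kappa^2)$, the sum of the subsystem Hamiltonians, as discussed after Proposition~\ref{prop:Dirac_comp}.
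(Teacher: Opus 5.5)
Your surjectivity argument matches the paper's. All four rigid coupling flows are free. The three mechanical rigid coupling efforts are set through $\overline e_{\rm r}$, with the balance equation solvable because $B_{\rm ext}^{\rm r}(\bq_{\rm r})$ has full row rank. The free distributed effort $e_{\rm d}^{\rm f}$, on which the modulation does not act, supplies the remaining component. The appeal to Proposition~\ref{prop:Dirac_comp} is also the same. The paper only asserts that substitution reproduces the model, so your explicit bookkeeping goes further than the paper. As written, however, it contains an error at the tangential port that would make the identification fail.

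You misread $K_{\rm r}(p_{\mathtt t})$. Its second (tangential) row is zero. Its first row gives $\tfrac{p_{\mathtt t}}{mL}(f_{{\mathtt n}0}-f_{{\mathtt n}f})$, as you correctly used on the distributed port. The factor $f_{\rm d}$ enters only through the first column, that is, in the two normal rows. Hence $e_{\mathtt t}^{\rm r}=\overline e_{{\rm r},\mathtt t}$, with no modulation term.

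With your extra term $\tfrac{p_{\mathtt t}}{mL}f_{\rm d}=\tfrac{p_{\mathtt t}P_{\rm b}(x_{\rm f})}{mL}$, the first balance row would give $\dot p_{\mathtt t}=\omega P_{\rm b}(x_{\rm f})+\mathtt t^\top\bm F_0+\mathtt t^\top\bm F_f+\tfrac{p_{\mathtt t}P_{\rm b}(x_{\rm f})}{mL}$. Nothing is left to cancel the last term, so ``these should combine'' is false as stated.

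The correct bookkeeping runs as follows.
\begin{itemize}
\item The coupling condition $e_{\mathtt t}^{\rm r}=-e_{\mathtt t}^{\rm f}$ gives $\overline e_{{\rm r},\mathtt t}=-\omega P_{\rm b}(x_{\rm f})$.
\item With $e_q^{\rm r}=0$ and $e_{\rm ext}^{\rm r}=-(\bm F_0^\top,\bm F_f^\top)^\top$, the first balance row is then exactly the $\dot p_{\mathtt t}$ equation in \eqref{eq:flexbeam-dynamics}.
\item The terms $-\tfrac{p_{\mathtt t}}{mL}f_{\rm d}$ in $e_{{\mathtt n}0}^{\rm r}$ and $+\tfrac{p_{\mathtt t}}{mL}f_{\rm d}$ in $e_{{\mathtt n}f}^{\rm r}$ are matched, in $e_{{\mathtt n}0}^{\rm r}=-e_{{\mathtt n}0}^{\rm f}$ and $e_{{\mathtt n}f}^{\rm r}=-e_{{\mathtt n}f}^{\rm f}$, by $-\tfrac{P_{\rm b}(x_{\rm f})}{L}f_{\mathtt t}$ and $+\tfrac{P_{\rm b}(x_{\rm f})}{L}f_{\mathtt t}$ from $K_{\rm f}$. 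This holds because the flow coupling gives $f_{\rm d}=P_{\rm b}(x_{\rm f})$ and $f_{\mathtt t}=p_{\mathtt t}/m$.
\item What remains is $\overline e_{{\rm r},{\mathtt n}0}=\tfrac{\partial}{\partial\xi}(EI\kappa)(0)$ and $\overline e_{{\rm r},{\mathtt n}f}=-\tfrac{\partial}{\partial\xi}(EI\kappa)(L)$. Balance rows two and three then give the translational conditions in \eqref{eq:flexbeam-boundary-conditions}.
\end{itemize}
This is the cancellation you deferred. With the tangential row corrected, the rest of your plan goes through.
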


\begin{proof}
Let $a,b\in\Zc_{\rm c}$ be arbitrary. The four rigid coupling flows may be
prescribed independently, while the flexible coupling flows can be chosen
to vanish by taking $e_p=0$ and $f_{\mathtt t}^{\rm f}=0$. Hence the flow
difference can be chosen equal to $a$.

The three mechanical components of the rigid coupling effort can be
prescribed by choosing $\overline e_{\rm r}$ appropriately, after
compensating the already determined contribution
$K_{\rm r}(p_{\mathtt t})f_{\rm c}^{\rm r}$. The rigid balance equation in
\eqref{eq:flexbeam-rigid-dirac} can then be satisfied because
$B_{\rm ext}^{\rm r}(\bq_{\rm r})$ has full row rank. The unrestricted
distributed effort of the flexible subsystem supplies the remaining
component of the effort sum. Thus
\[
\operatorname{im}\Gamma_{x_{\rm r},x_{\rm f}}
=
\Zc_{\rm c}\times\Zc_{\rm c}.
\]
Proposition~\ref{prop:Dirac_comp} now yields the asserted modulated Dirac
structure.

Substituting the storage relations, the external efforts
\[
e_{\rm ext}^{\rm r}
=
-
\begin{pmatrix}
\bm F_0\\
\bm F_f
\end{pmatrix},
\qquad
e_{\rm M}^{\rm f}
=
-
\begin{pmatrix}
M_0\\
M_f
\end{pmatrix},
\]
and the coupling conditions into the interconnected relation gives, on
$\ell_{\rm r}(\bq_{\rm r})=L$, the equations
\eqref{eq:flexbeam-dynamics}, the boundary conditions
\eqref{eq:flexbeam-boundary-conditions}, and the outputs
\eqref{eq:flexbeam-endpoint-outputs}. The storage function is the sum of the
rigid and flexible storage functions and therefore equals
\eqref{eq:flexbeam-H}.
\end{proof}

The two moving-frame contributions are assigned to different subsystem
Dirac structures. The rigid modulation gives
$e_{\rm d}^{\rm f}
=
\omega\,\tfrac{p_{\mathtt t}}m$
and therefore generates the distributed inertial term
$-\rho\omega p_{\mathtt t}/m$. The flexible modulation gives
\[
e_{\mathtt t}^{\rm f}
=
\omega P_{\rm b}(x_{\rm f})
=
\omega
\int_0^L p_{\rm b}(\xi)\,{\rm d}\xi
\]
and therefore generates the tangential reaction term in the equation for
$p_{\mathtt t}$. Thus each subsystem Dirac structure is modulated
exclusively by its own storage state.

\begin{remark}[Reconstruction of the relative displacement]
For every $\kappa\in L^2([0,L];\R)$, the relative displacement is recovered as
the unique solution of
\[
\tfrac{\partial^2w}{\partial\xi^2}
=
\kappa,
\qquad
w(0)=w(L)=0.
\]
It is given by
\[
w(\xi)
=
\int_0^\xi
(\xi-\eta)\kappa(\eta)\,{\rm d}\eta
-
\frac{\xi}{L}
\int_0^L
(L-\eta)\kappa(\eta)\,{\rm d}\eta.
\]
\end{remark}

\subsection{A slider--crank mechanism with a flexible connecting member}
\label{sec:slider-crank}

We now use the moving beam from Section~\ref{sec:moving-beam} as the
connecting member of a planar slider--crank mechanism; see
Figure~\ref{fig:flexible-slider-crank}. Its left endpoint is attached by an
ideal pin joint to the crank pin of a rigid wheel, whereas its right endpoint
moves along a fixed straight guide. No separate slider body or concentrated
slider mass is introduced. The right beam endpoint itself acts as the slider,
while the inertia and deformation of the connecting member are described by
the distributed state $(\kappa,p_{\rm b})$.

\begin{figure}[t]
  \centering
  \includegraphics[width=0.6\linewidth]{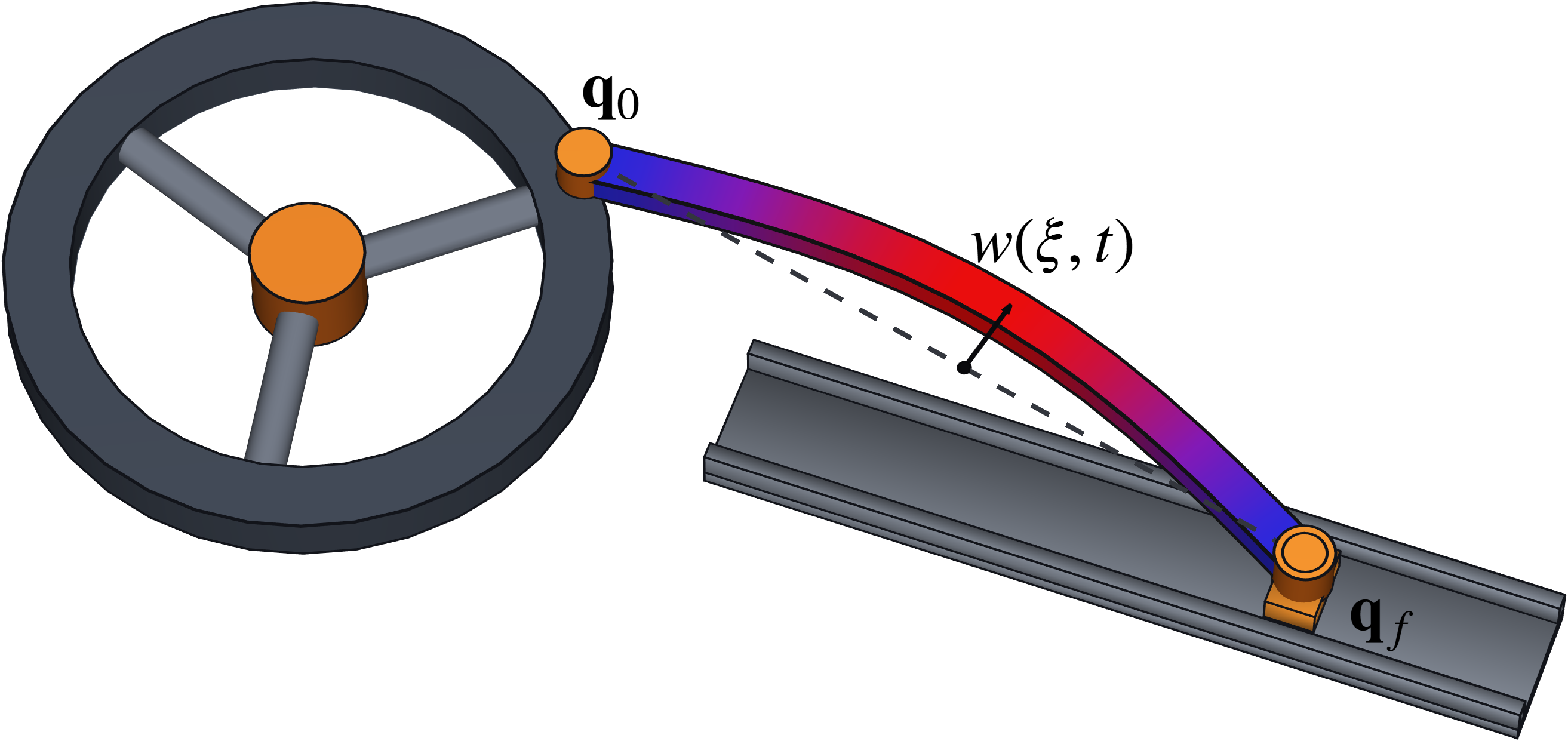}
  \caption{Planar slider--crank mechanism with a flexible connecting
  member. The left beam endpoint is attached to the crank pin of the rigid
  wheel, whereas the right endpoint moves along the fixed straight guide.}
  \label{fig:flexible-slider-crank}
\end{figure}

Let $\bm q_{\rm O}\in\R^2$ be the fixed wheel center, let $r_{\rm c}>0$ be
the crank radius, and let $\theta$ and $p_\theta$ denote the wheel angle and
angular momentum. With
$\bm a(\theta):=(\cos\theta,\sin\theta)^\top$ and
$\bm a_\perp(\theta):=J\bm a(\theta)$, the crank-pin position is
$\bm q_{\rm c}(\theta)=\bm q_{\rm O}+r_{\rm c}\bm a(\theta)$.
For a wheel inertia $I_{\rm w}>0$, its Hamiltonian and storage relation are
\[
\mathcal H_{\rm w}(\theta,p_\theta)
=
\frac{p_\theta^2}{2I_{\rm w}},
\qquad
\Lc_{\rm w}
=
\setdef{
\left(
\begin{pmatrix}
\theta\\
p_\theta
\end{pmatrix},
\begin{pmatrix}
0\\
p_\theta/I_{\rm w}
\end{pmatrix}
\right)
}{
(\theta,p_\theta)\in\R^2
}.
\]
The wheel has a two-dimensional crank-pin port
$(f_{\rm c}^{\rm w},e_{\rm c}^{\rm w})$ and a scalar driving-torque port
$(f_\tau^{\rm w},e_\tau^{\rm w})$. For each $(\theta,p_\theta)\in\R^2$, define
\begin{equation}
\Dc_{{\rm w},(\theta,p_\theta)}
:=
\setdef{
\left(
\begin{pmatrix}
f_\theta^{\rm w}\\
f_p^{\rm w}\\
f_\tau^{\rm w}\\
f_{\rm c}^{\rm w}
\end{pmatrix},
\begin{pmatrix}
e_\theta^{\rm w}\\
e_p^{\rm w}\\
e_\tau^{\rm w}\\
e_{\rm c}^{\rm w}
\end{pmatrix}
\right)
\in(\R^5)^2
}{
\begin{aligned}
f_\theta^{\rm w}
&=
e_p^{\rm w},
\;\;
f_\tau^{\rm w}
=
e_p^{\rm w},
\\
f_{\rm c}^{\rm w}
&=
r_{\rm c}\bm a_\perp(\theta)e_p^{\rm w},
\\
0
&=
f_p^{\rm w}
+
e_\theta^{\rm w}
+
e_\tau^{\rm w}
+
r_{\rm c}\bm a_\perp(\theta)^\top e_{\rm c}^{\rm w}
\end{aligned}
}.
\label{eq:slider-crank-wheel-dirac}
\end{equation}
For every $(\theta,p_\theta)\in\R^2$, this relation is power-isotropic and
five-dimensional, and hence a Dirac structure. Since its parametrization
depends smoothly on $\theta$ and is independent of $p_\theta$, the family
$\Dc_{\rm w}
:=
\bigl(
\Dc_{{\rm w},(\theta,p_\theta)}
\bigr)_{(\theta,p_\theta)\in\R^2}$
is a finite-dimensional modulated Dirac structure. If $\tau_{\rm w}$ is the physical torque acting on the wheel, then the
generator convention gives $e_\tau^{\rm w}=-\tau_{\rm w}$.

We denote the translational beam-endpoint ports by
$(f_0^{\rm b},e_0^{\rm b})$ and $(f_f^{\rm b},e_f^{\rm b})$, and the
corresponding moment ports by
$(f_{{\rm M}0}^{\rm b},e_{{\rm M}0}^{\rm b})$ and
$(f_{{\rm M}f}^{\rm b},e_{{\rm M}f}^{\rm b})$. Thus
$f_0^{\rm b}=\bm v_0$, $f_f^{\rm b}=\bm v_f$,
$e_0^{\rm b}=-\bm F_0$, and $e_f^{\rm b}=-\bm F_f$.
The crank-pin attachment is obtained by interconnecting the wheel port with
the left translational beam port. The pin joints transmit no bending
moments, so that
$e_{{\rm M}0}^{\rm b}=e_{{\rm M}f}^{\rm b}=0$, while the corresponding
angular-velocity flows remain unrestricted.

To describe the guide, let $\bm q_{\rm S}\in\R^2$ be a fixed point on it
and let $\bm a_{\rm s}\in\R^2$ be its unit direction. Set
$\bm n_{\rm s}:=J\bm a_{\rm s}$ and
$Q_{\rm s}:=(\bm a_{\rm s},\bm n_{\rm s})^\top$. The right translational
port is transformed according to
\begin{equation}
\begin{pmatrix}
f_{\rm s}^{\rm b}\\
f_{\rm n}^{\rm b}
\end{pmatrix}
=
Q_{\rm s}f_f^{\rm b},
\qquad
\begin{pmatrix}
e_{\rm s}^{\rm b}\\
e_{\rm n}^{\rm b}
\end{pmatrix}
=
Q_{\rm s}e_f^{\rm b}.
\label{eq:slider-crank-guide-transformation}
\end{equation}
Since $Q_{\rm s}$ is orthogonal, this transformation preserves the power
pairing. The parallel port
$(f_{\rm s}^{\rm b},e_{\rm s}^{\rm b})$ remains external and constitutes
the slider port, whereas the ideal guide imposes $f_{\rm n}^{\rm b}=0$ and
leaves the normal reaction effort unrestricted.

These conditions can be imposed simultaneously. Introduce the constant
termination structures
\[
\Dc_{\rm pin}
:=
\setdef{
(f_{\rm M},e_{\rm M})\in\R^2\times\R^2
}{
e_{\rm M}=0
},
\qquad
\Dc_{\rm guide}
:=
\setdef{
(f_{\rm n},e_{\rm n})\in\R\times\R
}{
f_{\rm n}=0
}.
\]
Let $\Pi_{\rm aux}$ be the fixed power-pairing-preserving permutation that
groups the crank-pin port, the two pin-moment ports, and the guide-normal
port into the coupling space, while leaving the wheel storage variables and
the driving-torque port among the remaining variables. Set
\[
\Dc_{\rm aux}
:=
\Pi_{\rm aux}
\bigl(
\Dc_{\rm w}\times\Dc_{\rm pin}\times\Dc_{\rm guide}
\bigr).
\]
This is a finite-dimensional modulated Dirac structure whose coupling ports
are the crank-pin port, the two pin-moment ports, and the guide-normal port. Let $\widetilde\Dc_{\rm b}$ denote the modulated Dirac
structure of the moving-beam subsystem after the fixed transformation \eqref{eq:slider-crank-guide-transformation} and a
power-pairing-preserving permutation that groups the crank-pin, moment, and
guide-normal ports into the coupling space
\[
\Zc_{\rm c}^{\rm sc}
=
\R^2\times\R\times\R\times\R.
\]
Let $\Pi_{\rm sc}$ be the fixed power-pairing-preserving permutation that
groups the wheel and beam storage variables and the remaining external
ports. The complete slider--crank interconnection is then
\begin{equation}
\Dc_{\rm sc}
:=
\Pi_{\rm sc}
\bigl(
\widetilde\Dc_{\rm b}
\circ_{\Zc_{\rm c}^{\rm sc}}
\Dc_{\rm aux}
\bigr).
\label{eq:slider-crank-dirac}
\end{equation}
We restrict the beam configuration to
\[
U_{\rm pos}^{\rm sc}
:=
\setdef{
\bq_{\rm r}\in U_{\rm pos}
}{
\bm a_{\rm s}^\top\mathtt t(\bq_{\rm r})\neq0
}.
\]
On this set, the coupling operator associated with
\eqref{eq:slider-crank-dirac} is surjective. Indeed,
\[
f_0^{\rm b}
=
\mathtt t\,e_p^{\rm r}
+
\mathtt n\,e_p(0)
\]
can be prescribed arbitrarily because $(\mathtt t,\mathtt n)$ is an
orthonormal basis. Once this flow has been fixed, the normal right-endpoint
flow
\[
f_{\rm n}^{\rm b}
=
\bm n_{\rm s}^\top\mathtt t\,e_p^{\rm r}
+
\bm a_{\rm s}^\top\mathtt t\,e_p(L)
\]
can be prescribed independently through $e_p(L)$. The two pin flows are
unrestricted. On the effort side, the crank-pin effort can be chosen
arbitrarily because the wheel balance can be satisfied through
$f_p^{\rm w}$, the two moment efforts can be prescribed independently
through the endpoint traces of $e_\kappa\in H^2([0,L];\R)$, and the guide
reaction effort is unrestricted. Proposition~\ref{prop:Dirac_comp}
therefore implies that $\Dc_{\rm sc}$ is a modulated Dirac structure.

By Remark~\ref{rem:order-of-interconnection}, relation
\eqref{eq:slider-crank-dirac} agrees, up to the ordering of the remaining
variables, with any successive construction in which the wheel, the two
pin terminations, and the guide are interconnected with the beam one after
another. Thus the mechanism is obtained from one finite-dimensional
modulated component and the moving beam by a single interconnection, as
described in
Remark~\ref{rem:several-finite-dimensional-subsystems}.

The total storage relation and Hamiltonian are
\[
\Lc_{\rm sc}
=
\Lc_{\rm w}\times\Lc_{\rm b},
\qquad
\mathcal H_{\rm sc}
=
\mathcal H_{\rm w}+\mathcal H_{\rm b}
=
\frac{p_\theta^2}{2I_{\rm w}}+\mathcal H_{\rm b}.
\]
The remaining external ports are the wheel-driving port
$(f_\tau^{\rm w},e_\tau^{\rm w})$ and the slider port
$(f_{\rm s}^{\rm b},e_{\rm s}^{\rm b})$.

If the initial configuration satisfies
$\bm q_0=\bm q_{\rm c}(\theta)$ and
$\bm n_{\rm s}^\top(\bm q_f-\bm q_{\rm S})=0$, then the crank-pin and guide
interconnections preserve these relations. Hence
$\bm q_f=\bm q_{\rm S}+s\bm a_{\rm s}$ with
$\dot s=f_{\rm s}^{\rm b}$, and together with
$\ell_{\rm r}=L$ this recovers the usual slider--crank geometry.

\section*{Statements and Declarations}

\subsection*{Funding}

This work was supported by the Deutsche Forschungsgemeinschaft (DFG, German
Research Foundation) through the project `\textit{Adaptive control of coupled rigid and
flexible multibody systems with port-Hamiltonian structure}' (Project-ID 362536361).

\subsection*{Author Contributions}

All authors contributed equally to this work.

\subsection*{Competing Interests}

The authors report no conflicts of interest.

\bibliography{pH-Multibody}

\end{document}